\documentclass[a4paper,leqno,12pt,twoside]{amsart}
\usepackage{amsmath,amsfonts,amssymb,amsthm,amscd}
\usepackage[utf8]{inputenc}
\usepackage[T1]{fontenc}
\usepackage[english]{babel}
\usepackage[top=1.2in,bottom=1.2in,left=1.in,right=1.in]{geometry} 
\usepackage{graphicx}
\usepackage{mathtools}
\usepackage{paralist}
\usepackage[usenames,dvipsnames,svgnames,table]{xcolor}
\usepackage{tabto}
\usepackage{standalone}
\usepackage{tikz}
\usetikzlibrary{matrix}
\usetikzlibrary{arrows}
\usepackage{xfrac}
\usepackage{amsthm, amsmath, amssymb,latexsym}
\usepackage{tikz-cd}
\usepackage[all]{xy} 

\usepackage{enumerate}
\usepackage{xcolor}
\usepackage{aliascnt}
\usepackage{mathabx}

\usepackage[colorlinks=true,citecolor=blue, urlcolor=blue, linkcolor=blue]{hyperref}
\usepackage{cleveref}

\usepackage{textcomp}

\def\ra{\rightarrow}

\newcommand{\tsk}[1]{\textcolor{YellowOrange}}

\newtheorem{teo}{Theorem}[section]

\newtheorem*{mainteoa}{Theorem A}
\newtheorem*{mainteob}{Theorem B}
\newtheorem{defin}[teo]{Definition}
\newtheorem{prop}[teo]{Proposition}
\newtheorem{cor}[teo]{Corollary}

\newtheorem{lemma}[teo]{Lemma}

\theoremstyle{definition}

\theoremstyle{definition}
\newtheorem*{ack}{Acknowledgement}
\newtheorem{remark}[teo]{Remark}

\newtheorem{question}[teo]{Question}
\newtheorem*{question*}{Question}

\newcommand{\Pic}{\operatorname{Pic}}

\title[Remarks on the positivity of the cotangent bundle of an Enriques surface]{Remarks on the positivity of the cotangent bundle of an Enriques surface}

\author{Dario Faro}

\address{Dario Faro  \\ Universit\`a degli Studi di Milano  \\ Dipartimento di Matematica \\ Via Saldini 50  \\ 20133 Milano, Italy  }
 \email{dario.faro@unimi.it}

\begin{document}

\begin{abstract}
Let $S$ be a very general Enriques surface. In this paper we show that $\Omega_{S}|_C$ is semistable for a general curve $C \in |H|$, where $H$ is an effective line bundle on $S$. Moreover, we give explicit constructions of families of smooth irreducible curves that destabilize $\Omega_S$.
\end{abstract}

\thanks{
	The author is a member of GNSAGA (INdAM), and was   partially supported by PRIN project {\em Moduli spaces and special varieties} (2022).  The author  is also grateful to the Max Planck Institute for Mathematics in Bonn for its hospitality and financial support while part of this work was carried out.
}

\maketitle

\section{Introduction} Let $S$ be a smooth projective variety over an algebraically closed field $k$ of characteristic zero and let $H$ be an ample line bundle on $S$. We recall that if $E$ is a torsion-free sheaf its slope is defined as \[ \mu_H(E):=\frac{\operatorname{deg}_H(E)}{\operatorname{rank}(E)}=\frac{c_1(E) \cdot H^{\operatorname{dim}(S)-1}}{\operatorname{rank}(E)}, 
\]
and $E$ is said to be $\mu$-(semi)stable if $\mu(F)< \mu(E)$ ($\leq $) for all subsheaves $F \subset E$ with $0 < \operatorname{rank}(F) < \operatorname{rank}(E)$. In this note we consider the situation when $S$ is an Enriques surface and $E=\Omega_S$ is the cotangent bundle. In this situation $\Omega_S$ is stable because the pullback to its $\operatorname{K3}$ cover is.  

In this note, we are interested in studying the $\mu-$semistability of the restriction of the cotangent bundle of $S$ to general smooth curves in the linear system $|H|$. The problem of studying the semistability of the restriction of vector bundles to general hypersurfaces is classical. We mention a few important results. The theorems of Mehta and Ramanathan assert that for a smooth polarized variety $(S,H)$ the restriction of a $\mu-$(semi)stable sheaf $E$ is (semi)stable for general hypersurfaces in $|mH|$ when $m >>0$ (\cite{MR1}, \cite{MR2}), while the restriction theorem of Flenner (\cite{F}) gives effective bounds on $m$ depending on the rank of $E$, the degree of $H$, and the dimension of $S$. Applied to the case of surfaces,  it yields the semistability for the restriction to the general element when $m \geq 2H^2$. A result of Hein (\cite{H1}), which is valid in any characteristic, gives instead effective bounds for the semistability of the restriction of rank $2$ semistable vector bundles, depending only on $H^2$ and the discriminant. When $S$ is an Enriques surface and $E=\Omega_S$, this gives that $\Omega_{S}|_{C}$ is semistable for general $C \in |2^lH|$ whenever $l \geq \log_2(\sqrt{\frac{48}{H^2}+1})$. Another important result for surfaces is Bogomolov's theorem. If $E$ is a locally free $\mu-$stable sheaf on $S$, the theorem says that $E_{|C}$ is $\mu$-stable for \emph{every} smooth curve in $|mH|$, when $m$ is sufficiently positive. The bound on $m$ depends only on some invariants of $E$ (\cite[Theorem 7.3.5]{HL}). When $S$ is an Enriques surface, and $E=\Omega_S$ it yields $m \geq 48$. A similar result, valid in any characteristic,  is proved by Hein (\cite[Theorem 2.8]{H1}). In the case of Enriques surfaces this says that $\Omega_{S}|_C$ is semistable for any $C \in |mH|$ when $m^2H^2 \geq 48$. In this case $H$ is required to be very ample.  The first main result of this paper is an improvement of the bound for the semistability of the restriction of $\Omega_S$ when $S$ is a very general Enriques surface. The result is  the following (Theorem \ref{maintheorem1}, see also Remark \ref{remarkcomparison2} for comparison).
\begin{mainteoa}
\label{teoa}
Let $S$ be a very general Enriques surface, and let $H$ be an effective line bundle on $S$. Let $C \in |H|$ be a general smooth curve. Then $\Omega_S|_{C}$ is semistable.
\end{mainteoa}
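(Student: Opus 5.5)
We argue by contradiction, using the normal-bundle sequence of $C$ and the rigidity of a very general Enriques surface. First, the numerics. Since $K_S$ is numerically trivial, $\deg(\Omega_S|_C)=K_S\cdot H=0$. So $\Omega_S|_C$ is semistable if and only if it has no line subbundle of positive degree.

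If $H^2=0$, the general $C$ is a smooth elliptic curve (a half-pencil or a member of an elliptic pencil). The conormal sequence
\[
0\to \mathcal{O}_C(-C)\to \Omega_S|_C\to \omega_C\to 0
\]
then has both outer terms of degree $0$, so any line subbundle has degree $\le 0$ and we are done. We may therefore assume $H^2>0$, so that $\deg \omega_C=H^2=2g-2>0$ and $\deg\mathcal{O}_C(-C)=-H^2<0$.

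Let $M\subset\Omega_S|_C$ be a saturated line subbundle with $\deg M>0$. The composite $M\to\omega_C$ is then nonzero, so $M\cong\omega_C(-D)$ with $D$ effective and $0\le \deg D<H^2$. Moreover, the extension class
\[
e_C\in \operatorname{Ext}^1(\omega_C,\mathcal{O}_C(-C))=H^1(T_C(-C))
\]
of the conormal sequence must die in $H^1(T_C(-C)(D))$. The class $e_C$ is the obstruction to splitting. It is also the image of the natural class governing how $C$ sits in $S$: it is the Kodaira--Spencer type class coming from $T_S|_C\to N_C$.

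Now let $C$ vary in an open set $U\subset|H|$. Taking $M$ to be the maximal destabilizing subbundle, which is unique, the divisors $D_C$ form an algebraic family over $U$. The standard deformation argument (as in Ein--Lazarsfeld or Flenner) differentiates the lifting condition along $U$, using $H^0(N_C)=H^0(\mathcal{O}_C(C))$ and $h^0(\mathcal{O}_S(H))=1+H^2/2$ (valid by Riemann--Roch and vanishing of $h^1$ for nef and big $H$ on an Enriques surface). The outcome should be one of two things. Either the subsheaves $M_C$ glue to a rank one saturated subsheaf $F\subset\Omega_S$ with $F\cdot H\ge \deg M_C>0$, contradicting the stability of $\Omega_S$ (inherited from the K3 cover). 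Or $D_C$ satisfies $2\deg D_C\ge H^2$ together with $h^0(\mathcal{O}_C(D_C))\ge 2$.

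To rule out the second alternative, use the very general hypothesis. Then $\operatorname{Pic}(S)\cong U\oplus E_8(-1)$ contains no $(-2)$-curves, every effective $H$ is nef, and there are no special curves on which the linear series could be cut. Concretely, the pencil $|\mathcal{O}_C(D_C)|$ sweeping $C$ would have to come from the restriction of a line bundle $L$ on $S$: compare with the gonality/Clifford index of a general $C\in|H|$, which on a nodal-free Enriques surface is computed by elliptic half-pencils via $\phi(H)=\min\{H\cdot E : E^2=0,\ E>0\}$. Comparing $\deg D_C$ with $2\phi(H)$ and with $H^2$ should force $\deg M\le 0$.

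Alternatively, one can pass to the K3 cover $\pi\colon X\to S$. Here $\Omega_S|_C$ is semistable if and only if $\Omega_X|_{\pi^{-1}C}$ is, since $\pi^{-1}C\to C$ is étale. The same argument then works with $h^1(\mathcal{O}_X)=0$, which may simplify the cohomology.

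The main obstacle I expect is the small cases: $H^2=2,4$, with $H$ of the form $E_1+E_2$ for half-pencils, or $H$ with $\phi(H)=1$. There the family $|H|$ has too few deformations for the differentiation argument. For these I would first try a direct computation. The general $C$ is hyperelliptic or of low genus, and the only candidate $D$ is explicit (for instance $D=E_i|_C$). One checks that $e_C$ does not die in $H^1(T_C(-C)(D))$, using that $h^0(\mathcal{O}_S(H-E_i))$ is small and that $E_i$ is not contained in any curve cutting $D$ on $C$. If that fails, I would degenerate $C$ to a nodal union of elliptic curves and use openness of semistability in families, computing the restriction to each component, where the degree-$0$ argument of the elliptic case applies.
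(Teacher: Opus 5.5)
\textbf{Your main argument has a genuine gap at the decisive step.} After ``differentiating the lifting condition along $U$'' you state a dichotomy: either the $M_C$ glue to a destabilizing $F\subset\Omega_S$, or $2\deg D_C\ge H^2$ and $h^0(\mathcal{O}_C(D_C))\ge 2$. This is asserted, not derived. Flenner, Mehta--Ramanathan or Bogomolov-type gluing arguments only work when the linear system is large compared with the invariants of the bundle. That is why they produce bounds such as $m\ge 2H^2$ or $m^2H^2\ge 48$, which is exactly the range this theorem is meant to go beyond. The paper also notes that the K3 argument of Dutta--Huybrechts does not transfer, for numerical reasons.

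Even if you grant the dichotomy, gonality or $\phi(H)$ cannot exclude the second alternative. Since $g(C)=H^2/2+1$, Riemann--Roch gives $h^0\ge 2$ for every divisor of degree at least $H^2/2+2$. So once $H^2\ge 6$ there are plenty of $D$ with $H^2/2\le\deg D<H^2$ and $h^0(\mathcal{O}_C(D))\ge 2$, and comparing $\deg D$ with $2\phi(H)$ rules none of them out. What you would actually have to show is that $e_C$ does not die in $H^1(T_C(-C)(D))$ for these specific $D$, and that is the entire content of the theorem.

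Passing to the K3 cover does not help either. $\pi^{-1}C$ is a $\tau$-invariant, hence special, member of $|\pi^*H|$, so no statement about general curves on $X$ applies to it.

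\textbf{Your fallback is the paper's route, but it lacks every ingredient that makes it work.} Degenerating to a nodal union of curves and using openness is in fact what the paper does. You need the following.
\begin{enumerate}
\item A notion of semistability on a reducible nodal curve that is open in families and follows from semistability on the components. The paper uses the Coskun--Larson--Vogt adjusted slope, for which $\mu^{adj}(F)\le\sum_i\deg F|_{C_i}\le 0$ when each $\Omega_S|_{C_i}$ is semistable of degree $0$.
\item A reason why $|H|$ contains such configurations at all. This rests on the isotropic decomposition $H\sim\sum a_iE_i+\epsilon E_1'$ together with the Galati--Knutsen theorem that every irreducible rational curve on a very general Enriques surface is $2$-divisible in $\operatorname{Num}(S)$. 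That theorem is what makes all half-pencils smooth elliptic rather than nodal; ``no $(-2)$-curves'' is not enough.
\item An actual inductive construction of a connected nodal member. The paper writes $H\sim H'+E$ and glues a general smooth $C'\in|H'|$, semistable by induction, to $E$. It treats separately the cases $H'^2=0$, $H'\cdot E=0$, and $H'$ not globally generated, including $\phi(H')=1$. Note that these components are not all elliptic.
\item A way to handle rigidity. Half-pencils satisfy $h^0(E_i)=1$, so when $E_1\cdot E_2=2$ they may be tangent and cannot be moved into transversal position. The paper needs a separate Gieseker-semistability computation at the tacnode (Lemma \ref{ordinarytacnode}).
\end{enumerate}
Finally, your $H^2=0$ case overlooks $H\sim mE_1$ with $m\ge 3$ and $H\sim mE_1+E_1'$ with $m\ge 1$. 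In these cases the general member is a disjoint union of elliptic curves, not a single smooth elliptic curve.
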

More precisely, we will need $S$ to satisfy Theorem~\ref{teoknutsengalati} of Knutsen and Galati (\cite[Theorem 1.1]{GK}), which shows that for a very general Enriques surface every irreducible rational curve is $2$-divisible in $\operatorname{Num}(S)$.

The proof of the result is based on the following observation. In the very general setting of Theorem~\ref{teoknutsengalati}, every effective line bundle $H$ is linearly equivalent to a sum of the type
\[
a_1E_1+\cdots+a_kE_k+\epsilon E_1',
\]
where the $E_i$'s (and $E_1'$) are smooth elliptic curves (Remark~\ref{remarksinguell}), and $a_i,\epsilon$ are integers. The proof proceeds by induction on the length $n:=a_1+...+a_k+\epsilon$,  and  the type of the decomposition. More precisely, starting from the semistability of $\Omega_S|_E$, when $E$ is a smooth elliptic curve (Lemma~\ref{ellipticfibration}), we construct, inductively on $n$, a nodal reducible curve in $|H|$ having the desired property. We then conclude by openness of semistability. In this argument, a fundamental role is played by the ``adjusted slope'' introduced in \cite{CLV}, which allows us to control semistability on reducible curves in a  simple way (Lemma~\ref{adjustednodalcurvesmultiples}).

In order to introduce the second main result of this paper, we recall that  by a theorem of Hartshorne, a vector bundle of degree zero on a smooth curve is semistable if and only if it is nef. Hence, Theorem A implies that the restriction to general smooth curves in $|H|$ is nef. On the other hand, the cotangent bundle of an Enriques surface is not pseudo-effective. This is an immediate consequence of a theorem of Nakayama (\cite{Nakayama}),  which establishes that the cotangent bundle of $K3$ surfaces is not pseudo-effective. 
 
 The failure of pseudo-effectiveness,  together with a fundamental theorem of Boucksom, Demailly, P\v{a}un and Peternell (\cite{BDPP}), implies that there exist families of curves dominating $S$ such that $\Omega_{S}|_C$ is not nef.  
 The second main result of this note gives an explicit construction of such a family. The example we provide has the additional property that the general member is smooth (see Theorem  \ref{maintheorem2}). 
\begin{mainteob}
Let $S$ be a very  general Enriques surface. Then there exists a line bundle $H'$ with $H'^2=48$ and $\phi(H')=4$, and a positive dimensional family of smooth irreducible curves in $|H'|$ such that $\Omega_{S}|_C$ is not semistable.
\end{mainteob}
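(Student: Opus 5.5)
We will construct a sub-line-bundle of $\Omega_S|_C$ of positive degree. Since $\deg \Omega_S|_C = K_S\cdot C = 0$, it suffices to find a line bundle $L$ on $S$, a curve $C\in|H'|$, and a nonzero map $L|_C\to\Omega_S|_C$ with $\deg(L|_C)=L\cdot C>0$. Equivalently, we can look for a rank-one quotient of $T_S|_C$ of negative degree. The natural source of such maps is a rational curve or an elliptic fibration. For instance, if $f:S\to\mathbb P^1$ is an elliptic pencil $|2E|$, the differential gives $f^*\omega_{\mathbb P^1}=\mathcal O_S(-4E)\otimes\mathcal O_S(\text{correction from the double fibers})\hookrightarrow\Omega_S$. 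This has negative degree on every curve, so it is useless as a sub. We therefore reverse the construction: we want curves $C$ to which the conormal direction of some map gives a positive sub.

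Our plan is to use a finite morphism of low degree. We take $H'$ with $\phi(H')=4$ and $H'^2=48$, for example $H'=2(E_1+E_2+E_3)$ with $E_i\cdot E_j=1$, so that $(E_1+E_2+E_3)^2=6$ and $\phi=4$. Next we consider a degree-two quotient map, namely the K3 cover $\pi:X\to S$, or a map $\psi:S\to\mathbb P^2$ or to a quadric induced by a sublinear system. For curves $C$ that are preimages of curves $\Gamma\subset\mathbb P^2$ (or curves tangent to the ramification), the pullback $\psi^*\Omega_{\mathbb P^2}|_C\to\Omega_S|_C$ contains the sub $\psi^*N^\vee_{\Gamma}=\mathcal O_C(-\psi^*\Gamma)$. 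This has negative degree, so we instead use the conormal sequence. For $C$ smooth in $S$ we have $0\to\mathcal O_C(-C)\to\Omega_S|_C\to\omega_C\to0$, where $\deg\omega_C=C^2>0$. We then need the quotient $\Omega_S|_C\to\omega_C$ to be destabilizing in the dual sense. That is, we need a sub of $\Omega_S|_C$ of positive degree, which arises from a splitting-like map $\omega_C(-D)\to\Omega_S|_C$ lifting part of $\omega_C$. Concretely, if $C=\psi^{-1}(\Gamma)$ is a double cover of a smooth curve $\Gamma$ via $\psi$, then $\psi^*\omega_\Gamma\to\omega_C$ factors through $\Omega_S|_C$ via $d\psi$, giving a sub of degree $2\deg\omega_\Gamma$. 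This degree is positive once $g(\Gamma)\ge2$.

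The key steps, in order, are as follows:
\begin{enumerate}
\item Choose a polarization $D$ (for example $D=E_1+E_2+E_3$ or a Fano-type divisor with $D^2=10$) giving a degree-two morphism $\psi$ onto a surface $Y$ (a quadric or a del Pezzo-type surface), and set $H'=\psi^*(\text{line bundle }M)$ with $H'^2=2M^2=48$ and $\phi(H')=4$.
\item Take $\Gamma\in|M|$ general, which is smooth of genus $g\ge2$, and $C=\psi^{-1}(\Gamma)$. This curve is smooth and irreducible by Bertini as long as $\Gamma$ meets the branch locus transversally; varying $\Gamma$ gives a positive-dimensional family.
\item Show that $d\psi:\psi^*\Omega_Y\to\Omega_S$ restricted to $C$ contains $\psi^*\Omega_Y|_\Gamma\to\psi^*\omega_\Gamma$ compatibly. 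Hence the saturation of the image of $\psi^*\omega_\Gamma$ (twisted down by the ramification) sits in $\Omega_S|_C$ with degree $2(2g(\Gamma)-2)-(\text{ramification contribution})>0$.
\end{enumerate}

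The main obstacle is step 3: making the sub genuinely lie in $\Omega_S|_C$ rather than only in its quotient $\omega_C$, and controlling the degree loss along the ramification locus of $\psi$. We would first try to work on the K3/étale side or pick $\psi$ so that $C$ meets the branch curve minimally, and then compute the degree explicitly to verify positivity for the specific invariants $H'^2=48$, $\phi=4$.
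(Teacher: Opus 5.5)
There is a genuine gap, and it is in step 3, which carries the whole argument.

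\textbf{Step 3 does not produce a subsheaf.} Let $C=\psi^*\Gamma$ with $\Gamma$ transverse to the branch curve. The local equation of $C$ is the pullback of that of $\Gamma$. Hence $d\psi$ maps the pulled-back conormal sequence $0\to\psi^*N^\vee_\Gamma\to\psi^*\Omega_Y|_C\to\psi^*\omega_\Gamma\to 0$ to $0\to N^\vee_C\to\Omega_S|_C\to\omega_C\to 0$. On the left it is an isomorphism $\psi^*N^\vee_\Gamma\cong\mathcal{O}_C(-C)=N^\vee_C$; on the right it is the differential $\psi^*\omega_\Gamma\to\omega_C$.

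So $\psi^*\omega_\Gamma$ is a \emph{quotient} of $\psi^*\Omega_Y|_C$, and $d\psi$ does not map it into $\Omega_S|_C$ at all. The only subsheaf $d\psi$ produces is $\psi^*\Omega_Y|_C$ itself, of degree $2K_Y\cdot\Gamma<0$. Its evident line subbundle is the conormal line, which is already saturated and has degree $-C^2$. A lift of $\psi^*\omega_\Gamma$ (or of some $\omega_C(-D')$) into $\Omega_S|_C$ would be a partial splitting of the conormal sequence of $C$. That is precisely the destabilization to be proved, and nothing in your construction supplies it.

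A quick check shows the claimed mechanism must fail. It would apply verbatim to a general $\Gamma\in|kM|$ for every $k$. The correction you subtract is bounded by the ramification of $C\to\Gamma$, which is linear in $k$, while $2g(\Gamma)-2$ grows quadratically. So it would destabilize $\Omega_S$ on smooth curves of $|k\psi^*M|$ for all $k\gg0$. This contradicts Bogomolov's restriction theorem for the ample class $\psi^*M$.

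\textbf{Step 1 is also off.} We have $(2(E_1+E_2+E_3))^2=4\cdot 6=24$, not $48$. Moreover, $|E_1+E_2+E_3|$ gives the birational Enriques sextic model rather than a double cover, and a Fano polarization is very ample on an unnodal surface. A genuine double cover does exist, e.g.\ the superelliptic system $|2F_1+2F_2|$, but that does not rescue step 3.

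\textbf{The missing idea.} What you need is a genuine source of negativity, and the paper finds it in exactly the subsheaf you discarded at the start. Take the fibration $f$ given by $|2F_1|$, with $F_1\cdot F_2=1$ and $D=F_1+F_1'$. The image of $f^*\omega_{\mathbb{P}^1}(D)\equiv-2F_1$ in $\Omega_S$ is \emph{not saturated}. In the Höring--Peternell sequence the quotient is $I_Z\otimes\omega_{S/\mathbb{P}^1}(-D)$, where for very general $S$ the scheme $Z$ consists of the $12$ nodes of the singular fibres. Hence on a smooth curve $C$ through all of $Z$, the saturation of $f^*\omega_{\mathbb{P}^1}(D)|_C$ in $\Omega_S|_C$ has degree at least $-2F_1\cdot C+12$. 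This is positive once $F_1\cdot C\le5$. So ``negative on every curve'' fails as soon as the curve is forced through the zeros of $df$.

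The paper realizes such curves inside $\mathbb{P}(\Omega_S)$, with $L=\mathcal{O}_{\mathbb{P}(\Omega_S)}(1)$:
\begin{enumerate}
\item $S_1=\mathbb{P}(I_Z\otimes\omega_{S/\mathbb{P}^1}(-D))\in|L+2\pi^*F_1+\pi^*K_S|$ is the blow-up of $S$ at $Z$, smooth because the singular fibres are nodal.
\item $|L+4\pi^*(F_1+F_2)|$ is base point free, because $\Omega_S(4F_1+4F_2)$ is globally generated (proved via $\tau$-invariant sections on the Horikawa model).
\item A general member of this system cuts $S_1$ in a smooth irreducible $\tilde C$ with $L\cdot\tilde C=-4$ and $\tilde C\cdot E_i=1$ for each exceptional curve $E_i$.
\end{enumerate}
So $\tilde C$ maps isomorphically onto a smooth $C\subset S$ through the $12$ points, with $C^2=48$, $F_1\cdot C=4$ and $\phi(C)=4$. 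The restriction $L|_{\tilde C}$ is a quotient of $\Omega_S|_C$ of degree $-4$. Dually, the saturated subsheaf above has degree $-8+12=4$.
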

Since the restriction of the cotangent bundle of $S$ to a general curve is semistable, we remark that Theorem B provides an explicit construction of a very special, and possibly interesting (for other applications),  family of smooth curves on $S$. 

 The idea of the construction is based on an analogous one for very general $K3$ surfaces of Gounelas and Ottem (\cite{GO}): the family is obtained as a complete intersection of two smooth irreducible divisors in $\mathbb{P}(\Omega_S)$,  and the destabilizing line bundle is the restriction of the tautological line bundle $\mathcal{O}_{\mathbb{P}(\Omega_S)}(1)$.  Given the numerics involved in the Enriques case, however, trying to carry out a similar construction is quite delicate. This is explained in Remark \ref{remarkcomparison}. A fundamental ingredient for our approach in the Enriques case  is the  construction, due to Höring and Peternell (\cite{HP}), of  a very interesting  surface in $\mathbb{P}(\Omega_S)$. The surface is constructed starting with an elliptic fibration on $S$   and, in this case, provides a major obstruction to the nefness of $\Omega_S$.

\subsection{Organization of the paper}
The paper is organized as follows. In Section~\ref{section2}, we first recall some basic facts about isotropic decompositions of effective line bundles and the geometry of half-fibres. We then briefly review the theory of Coskun–Larson–Vogt and give some simple applications to configurations of elliptic curves, and more generally to reducible nodal curves for which the semistability of the restriction of $\Omega_S$ to each irreducible component is known. We also give a separate argument, based on Gieseker semistability, to treat the case of two smooth elliptic curves tangent with intersection multiplicity two. After dealing with a few additional low-length configurations, we combine these ingredients  to prove Theorem~\ref{maintheorem1}.

 In Section \ref{section4} we briefly review some background on the positivity of $\Omega_S$ when $S$ is an Enriques surface. Finally, in Section \ref{section5}, we present an explicit construction of destabilizing curves: this is Theorem \ref{maintheorem2}.

\begin{ack}
I am grateful to Edoardo Sernesi for pointing out the very interesting article \cite{GO} and for explaining the constructions given in that paper; to Frank Gounelas and John Ottem for very interesting conversations on this topic; and to Frank Gounelas, Andreas Höring, and Andreas Knutsen for their very helpful and useful comments and feedback on this note. Finally, I am grateful to Andreas Höring for pointing out his paper \cite{HP} to me and for suggesting that the surface considered in Section~\ref{section5} could provide a good source of negativity for $\Omega_S$.
\end{ack}

\subsection{Convention}
The main results are proved for smooth projective (sometimes very general) Enriques surfaces over an algebraically closed field $k$ of characteristic zero.  If  $E$ is a vector bundle on a variety $X$ we use Grothendieck convention to denote denote $\mathbb{P}(E)$.

\section{Semistability of the restricted cotangent bundle}
\label{section2}
\medskip
\noindent\textbf{Bacground and strategy of the proof.}
\medskip

Let  $S$ be a very general  Enriques surface defined over $k=\bar k$ of characteristic zero. In this section we prove a result for the semistability of the restriction of $\Omega_S$  to the general curve in $|H|$: this is Theorem  \ref{maintheorem1}.   More precisely we will need $S$ to satisfy:
\begin{teo}[\cite{GK}[Theorem 1.1]
\label{teoknutsengalati}
Let $S$ be a very general Enriques surface. If $C \subset S$ is an irreducible rational curve, then $C$ is $2-$divisible in $\operatorname{Num}(S)$. 
\end{teo}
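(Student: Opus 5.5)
The plan is to pass to the K3 cover and use that, for a very general Enriques surface, the Néron--Severi group of the cover is as small as possible. Let $\pi\colon X\to S$ be the universal (étale, degree $2$) K3 cover, with covering involution $\iota$. I will use two standard facts.

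\begin{enumerate}[(i)]
\item $\pi^*\colon \operatorname{Num}(S)\cong U\oplus E_8 \to H^2(X,\mathbb{Z})$ is injective. Its image is the full invariant lattice $H^2(X,\mathbb{Z})^{\iota}\cong U(2)\oplus E_8(2)$; this is the Barth--Peters/Horikawa description of the lattice of the Enriques involution.
\item For a very general Enriques surface, $\operatorname{NS}(X)=H^2(X,\mathbb{Z})^{\iota}$, so $\rho(X)=10$. This holds because the period of $X$ is very general in the $10$-dimensional period domain for the anti-invariant part, so no anti-invariant class is algebraic.
\end{enumerate}

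Combining (i) and (ii) gives $\operatorname{NS}(X)=\pi^*\operatorname{Num}(S)$.

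Now let $C\subset S$ be an irreducible rational curve with normalization $\nu\colon\mathbb{P}^1\to C$. The étale double cover $X\times_S\mathbb{P}^1\to\mathbb{P}^1$ is trivial, since $\mathbb{P}^1$ is simply connected. Hence $\nu$ lifts to $X$, and $\pi^{-1}(C)=C_1\cup C_2$ with $C_2=\iota(C_1)$ and each $C_i$ an irreducible rational curve mapping birationally onto $C$. In particular $\pi^*[C]=[C_1]+\iota^*[C_1]$ in $\operatorname{NS}(X)$.

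By $\operatorname{NS}(X)=\pi^*\operatorname{Num}(S)$ there is $D\in\operatorname{Num}(S)$ with $[C_1]=\pi^*D$. Such a class is $\iota$-invariant, so $\iota^*[C_1]=[C_1]$ and $\pi^*[C]=2\pi^*D=\pi^*(2D)$. Injectivity of $\pi^*$ on $\operatorname{Num}(S)$ then gives $C\equiv 2D$, which is the claimed $2$-divisibility.

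The only nontrivial input, and the step I expect to need the most care, is (ii), together with the precise identification of the image in (i) as a primitive sublattice equal to the whole invariant part. I would cite Barth--Peters for (i). For (ii) I would argue with the Torelli theorem and the surjectivity of the period map for Enriques surfaces: the locus where some anti-invariant class becomes algebraic is a countable union of proper hypersurfaces (orthogonals to vectors of $U\oplus U(2)\oplus E_8(2)$) in the period domain, so the very general member avoids it. The rest is the formal lifting argument above.
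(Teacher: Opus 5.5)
Your argument is correct. The paper gives no proof of its own and simply quotes \cite[Theorem 1.1]{GK}; your route is essentially the argument behind that cited result, namely Picard number $10$ of the K3 cover for very general $S$, the splitting of $\pi^{-1}(C)$ into two $\iota$-conjugate rational curves of equal class, and injectivity of $\pi^*$ on $\operatorname{Num}(S)$. You are also right that the integral conclusion needs $\pi^*\operatorname{Num}(S)$ to equal the full invariant lattice $H^2(X,\mathbb{Z})^{\iota}\cong U(2)\oplus E_8(2)$, and this is exactly the standard Horikawa/Barth--Peters description you cite.
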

In the statement, by “very general”, we mean that there is a set that is the complement of a countable union of proper Zariski-closed subsets in the moduli space satisfying the given conditions.

The proof of Theorem \ref{maintheorem1} is based on specialization to nodal curves in the linear system $|H|$. Before explaining this, we need to introduce some little background on curves on Enriques surfaces. 

The first main ingredient is the simple  isotropic decomposition of effective line bundles on Enriques surfaces. We borrow from \cite[Section 2]{cdgk2} what follows.

Let $S$ be an Enriques surface. Any effective line bundle on $S$ with $H^2 \geq 0$ is linearly equivalent to:
\[
H \sim a_1E_1+...+a_nE_n+\epsilon K_S
\]
where all $E_i$ are effective, primitive and isotropic, all $a_i$ are positive integers, $n \leq 10$, and $\epsilon \in \{0,1\}$.
Moreover,
\begin{equation}
\label{intersectionelliptic}
\begin{cases}
\text{either } n\neq 9,\quad E_i\cdot E_j=1 \text{ for all } i\neq j,\\
\text{or } n\neq 10,\quad E_1\cdot E_2=2
\text{ and otherwise } E_i\cdot E_j=1 \text{ for all } i\neq j,\\
\text{or } E_1\cdot E_2=E_1\cdot E_3=2
\text{ and otherwise } E_i\cdot E_j=1 \text{ for all } i\neq j.
\end{cases}
\end{equation}

The following remark is crucial for the rest of this section.

\begin{remark}
\label{remarksinguell}
Any effective primitive isotropic line bundle $E$ is also called a half-fiber. Indeed, the linear system $|2E|$ defines a genus-one fibration on $S$, whose only multiple fibers are two double fibers $2E_1$ and $2E_2$. One has $h^0(E_i)=1$, and up to reordering:
\[
E_1 \sim E, \qquad E_2 \sim E+K_S,
\]
(see \cite[Proposition 2.2.8 and Corollary 2.2.9]{CDL}). We recall that an Enriques surface is called unnodal if it does not contain any smooth rational curves. The general Enriques surface (in the moduli space) is unnodal. If $S$ is very general (in the sense of Theorem \ref{teoknutsengalati}), then it is easy to see that $S$ is is  unnodal (using that $R^2=-2$ for a smooth rational curve). If  $E$  is an half-fiber of an elliptic fibration on $S$, then $E$ is either a smooth elliptic curve, a rational curve with a node, or a reducible nodal curve (i.e.\ of type $I_b$, $b \ge 0$, in Kodaira's classification of singular fibres \cite[V.7]{BHPV}). If $S$ is as in Theorem  \ref{teoknutsengalati}, we can then  assume that $E$ is of type $I_0$ (smooth elliptic curve). 
\end{remark}
Thanks to the previous remark we can then assume that 
\begin{equation}
\label{decomposition}
H \sim a_1E_1+\cdots+a_nE_n+  \epsilon E'_1,
\end{equation}
where  where all $E_i$, $i=1,..,n$ are smooth elliptic curves satisfying  \eqref{intersectionelliptic}, $\epsilon \in \{0,1\}$, and $E'_1 \sim E_1 +K_S$ is the other half fiber of the elliptic fibration  associated with $|2E_1|$. 

Now we recall the definition of the $\phi$-function which was introduced by Cossec (for details see for example \cite{CDL}).

\begin{defin}
	\label{phi}
	Let $H$ be a big and nef line bundle on $S$.
    \[
\phi(H):=\text{min}\{|H \cdot F|: F\in \Pic(S), F^2=0, F \not \equiv 0\}.
\]
\end{defin}
The value $\phi(H)$ is always bounded from above: 
\begin{equation}
\label{inequalityHquadro}
H^2 \geq \phi(H)^2.
\end{equation}
The $\phi-$function measures the ``regularity" of the (rational) map associated with a line bundle.  This is well-expressed by the following. 
\begin{teo}\cite[Theorem $2.4.14$, Theorem $2.4.18$ and $2.4.19$]{CDL}
\label{etoegkveraglob}
Let $S$ be an Enriques surface and let $L$ be a big and nef line bundle on $S$. Then 
\begin{itemize}
    \item $|L|$ is base point-free if and only if $\phi(L) \geq 2$.
    \item $|L|$ is very ample if and only if $\phi(L) \geq 3$ and there exists no effective divisor $E$ on $S$ such that $E^2=-2$.
\end{itemize}
\end{teo}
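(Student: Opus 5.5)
We would deduce both statements from Reider's theorem, which is available since we work in characteristic zero, combined with the parity of the intersection form on $\operatorname{Num}(S)$. Since $K_S\equiv 0$, the genus formula gives $D^2=2p_a(D)-2$, so $D^2$ is even for every divisor $D$. In particular $L^2\geq 2$, and Riemann--Roch with Kawamata--Viehweg vanishing gives $h^0(L)=L^2/2+1$. Reider's theorem, for $L=K_S+(L-K_S)$ with $(L-K_S)^2=L^2$, says the following.
\begin{itemize}
\item If $L^2\geq 5$ and $p$ is a base point of $|L|$, there is an effective $D\ni p$ with $L\cdot D=0,\ D^2=-1$ or $L\cdot D=1,\ D^2=0$.
\item If $L^2\geq 10$ and $|L|$ fails to separate a length-two subscheme, there is an effective $D$ through it with $(L\cdot D,D^2)\in\{(0,-1),(0,-2),(1,0),(1,-1),(2,0)\}$.
\end{itemize}
Parity kills all cases with $D^2=-1$.

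\textbf{Base point freeness.} Suppose first $\phi(L)\geq 2$. By \eqref{inequalityHquadro} we get $L^2\geq 4$, so $L^2=4$ or $L^2\geq 6$.
\begin{itemize}
\item If $L^2\geq 6$, Reider leaves only an isotropic $D$ with $L\cdot D=1$, which contradicts $\phi(L)\geq 2$.
\item If $L^2=4$, Reider does not apply, and we would argue by hand. Using the isotropic decomposition \eqref{decomposition}, the only possibility with $\phi=2$ is $L\equiv E_1+E_2$ with $E_1\cdot E_2=2$. Each of $|2E_1|$ and $|2E_2|$ is a base point free pencil. We would show $|L|$ is base point free by studying the restriction $H^0(L)\to H^0(L|_{E_1})$: its kernel is $H^0(E_2')$, which is one-dimensional, so the image is $2$-dimensional and spans the degree-$2$ line bundle $L|_{E_1}$, hence has no base points on $E_1$. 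We then move $E_1$ in its pencil (using $E_1'$ as well) to cover $S$.
\end{itemize}
Conversely, if $\phi(L)=1$, take a half-fiber $E$ with $L\cdot E=1$. Riemann--Roch gives $h^0(L-E)\geq (L-E)^2/2+1=L^2/2=h^0(L)-1$. Hence the restriction to $E$ has image of dimension at most $1$ in $H^0(L|_E)$. Since $\deg L|_E=1$ on a genus-one curve, the image consists of a single section vanishing at one point, and that point is a base point of $|L|$.

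\textbf{Very ampleness.} Assume $\phi(L)\geq 3$ and that there is no $(-2)$-divisor. Then $L^2\geq 9$, so by parity $L^2\geq 10$ and Reider applies.
\begin{itemize}
\item The cases with $D^2=0$ and $L\cdot D\leq 2$ contradict $\phi(L)\geq 3$.
\item The case $D^2=-2$, $L\cdot D=0$ is excluded by hypothesis.
\end{itemize}
Conversely, if there is an isotropic half-fiber $E$ with $L\cdot E\leq 2$, the same kernel count shows that $|L|$ restricted to $E$ has image of dimension at most $2$ in a degree $\leq 2$ linear series on a genus-one curve. This cannot embed $E$. If there is a $(-2)$-curve $R$ with $L\cdot R=0$, then $R$ is contracted, so $L$ is not ample.

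\textbf{Main obstacle.} The delicate point is the precise formulation of the $(-2)$ condition. Only $(-2)$-divisors with $L\cdot D=0$ obstruct very ampleness, and we would read the cited statement in that sense. The other delicate point is the low-degree case $L^2=4$, which lies outside Reider's range and must be done by the explicit half-fiber analysis above.
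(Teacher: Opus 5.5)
The paper gives no proof of this statement; it only quotes it from \cite{CDL}. Most of your argument is sound. This includes Reider applied to $M=L-K_S$ together with the parity of $D^2$ (for $L^2\geq 6$ and for very ampleness), and the Riemann--Roch kernel counts for both converse directions. Your reading of the $(-2)$-condition is also the right one. Read literally, the ``only if'' in the second bullet fails on nodal surfaces: if $A$ is ample, then $3A$ is ample with $\phi(3A)\geq 3$, hence very ample, even though $S$ contains $(-2)$-curves. On unnodal surfaces, which are the only ones the paper uses, the two readings agree.

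\textbf{The gap is the end of the case $L^2=4$.} The step ``move $E_1$ in its pencil to cover $S$'' does not work, because a half-fibre does not move: $h^0(E_1)=h^0(E_1')=1$. The curves that do move are the fibres $F\in|2E_1|$. For these, $L\cdot F=4$ and $h^0(L-F)=h^0(E_2-E_1+\epsilon K_S)=0$, since $E_2\cdot(E_2-E_1)=-2<0$ and $E_2$ is nef. So $H^0(L)$ only injects as a hyperplane of the $4$-dimensional space $H^0(L|_F)$. A hyperplane of a degree-$4$ series on an elliptic curve has a base point exactly when it equals $H^0(L|_F(-x))$ for some $x$, and your kernel count does not exclude this. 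Hence your argument says nothing about points off $E_1\cup E_1'$.

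\textbf{The repair is short.} A base point of $|L|$ lies on every member of $|L|$. In particular it lies on $E_1+(L-E_1)$, where $L-E_1\sim E_2+\epsilon K_S$ is a half-fibre. That half-fibre also satisfies $L\cdot E_2=2$ and $h^0(L-E_2)=1$. So the same surjectivity argument shows that $H^0(L)\to H^0(L|_{E_2})$ is onto a base point free degree-$2$ series, and $|L|$ has no base points at all.

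On nodal surfaces the $E_i$ in the isotropic decomposition need not be nef or irreducible. There this last step needs some extra care, component by component. For the unnodal surfaces relevant to the paper it works as stated.
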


Before stating the main result of the section, we prove the following simple lemma.

\begin{lemma}
\label{remarksmoothcurve}
Let $S$ be a very general Enriques surface in the sense of
Theorem~\ref{teoknutsengalati}, and let $H \not \simeq \mathcal{O}_S$ be an effective line bundle on $S$.
Then the linear system $|H|$ contains a smooth irreducible curve, except in the
cases cases $H \sim mE_1$ with $m \geq 3$, or 
$H \sim mE_1+E_1' $ with $m \geq 1$.  In both cases, a general element of $|H|$ is a disjoint union of smooth
irreducible elliptic curves.
\end{lemma}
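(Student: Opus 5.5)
Since $S$ is very general in the sense of Theorem~\ref{teoknutsengalati}, it is unnodal (Remark~\ref{remarksinguell}), so every effective line bundle is nef, and every half-fibre in \eqref{decomposition} is a smooth elliptic curve. I would argue by cases according to $H^2$ and $\phi(H)$, using the decomposition \eqref{decomposition} $H\sim a_1E_1+\cdots+a_nE_n+\epsilon E_1'$.

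\textbf{Case $H^2=0$.} Then $H\equiv mE$ for a half-fibre $E$ with $m\ge1$, i.e.\ $H\sim mE_1$ or $H\sim mE_1+E_1'$. By Remark~\ref{remarksinguell}, $|2E_1|$ is an elliptic pencil $f\colon S\to\mathbb{P}^1$ whose only multiple fibres are $2E_1$ and $2E_1'$. Write $mE_1=2kE_1$ or $(2k+1)E_1$; since $h^0(E_1)=h^0(E_1')=1$, the standard computation for elliptic fibrations gives $|H|=kE_1^{(\text{fixed})}+f^*|\mathcal{O}_{\mathbb{P}^1}(k)|$ up to adding $E_1$ or $E_1'$ as a fixed part. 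Concretely:
\begin{itemize}
\item $m=1$: $|E_1|=\{E_1\}$ is a smooth elliptic curve;
\item $m=2$: $|2E_1|$ is the pencil, whose general member is a smooth elliptic fibre;
\item $m\ge3$: the general member is a union of $\lfloor m/2\rfloor$ distinct smooth fibres, plus $E_1$ if $m$ is odd;
\item $H\sim mE_1+E_1'$: $H$ is a union of fibres together with $E_1$ and/or $E_1'$ (note $E_1+E_1'\sim 2E_1+K_S$ has $h^0=1$).
\end{itemize}
In all these cases the members are disjoint, since fibres of $f$ are disjoint and $E_1$, $E_1'$ are distinct fibre supports. This yields the two exceptional families and shows that their general members are disjoint unions of smooth elliptic curves. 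The only inputs are $h^0(mE_1)$ and $h^0(mE_1+K_S)$ via $f_*$, using $f_*\mathcal{O}_S(E_1)$-type computations or Riemann--Roch together with the known fixed parts.

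\textbf{Case $H^2>0$.} Here $H$ is big and nef. If $\phi(H)\ge2$, then $|H|$ is base-point-free by Theorem~\ref{etoegkveraglob}, hence Bertini gives a smooth general member. Irreducibility follows because the image of $\varphi_H$ is a surface (bigness), so general members are connected (Kawamata--Viehweg gives $h^1(-H)=0$, hence connectedness), and smooth plus connected means irreducible.

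The remaining case is $\phi(H)=1$, and I expect it to be the main obstacle. Then there is a half-fibre $E$ with $H\cdot E=1$. By the theory in \cite{CDL}, $|H|$ has base points only when $H^2=2$ (two base points) or when the base locus contains a smooth rational curve. The latter is excluded since $S$ is unnodal, so $|H|$ has at most finitely many base points. To show the general member is smooth there, I would use that $H\cdot E=1$ with $E$ in a pencil-free situation. Writing $H\sim E+M$ via the decomposition, one can exhibit members $E+M'$ that are smooth at the base point. Alternatively, a base point $p$ with $H^2=2$ is a simple base point: two general members meet transversally at $p$, since $H^2=2$ counts both base points with multiplicity one. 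Hence the general member is smooth at $p$, and Bertini handles points away from the base locus. Irreducibility again follows from bigness and connectedness as above.
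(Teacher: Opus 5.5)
Your cases $H^2=0$ and $\phi(H)\ge 2$ follow essentially the paper's argument. Your connectedness step via $h^1(-H)=0$ usefully makes the irreducibility explicit.

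\textbf{The gap is in the case $\phi(H)=1$.} It rests on a false claim: on an unnodal Enriques surface it is not true that $|H|$ has base points only when $H^2=2$. That is the K3 picture, where base points come from a fixed rational curve. By Theorem~\ref{etoegkveraglob}, a big and nef $H$ is base-point-free if and only if $\phi(H)\ge 2$, so every $H$ with $\phi(H)=1$ has base points. Concretely, if $H\cdot E=1$ for a half-fibre $E$, then $\mathcal{O}_E(H)$ has degree $1$ and a one-dimensional space of sections, so all members not containing $E$ pass through the same point of $E$. For example, $H\sim 2E_1+E_2$ with $E_1\cdot E_2=1$ has $H^2=4$, $\phi(H)=1$, and base points $E_1\cap E_2$ and $E_1'\cap E_2$. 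Hence your multiplicity count (``$H^2=2$ accounts for two simple base points'') only treats $H\sim E_1+E_2$. Your other suggestion, that one can exhibit members $E+M'$ smooth at the base points, restates what has to be proved rather than proving it.

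What is missing is exactly what the paper imports from \cite{CDL} (Propositions 2.6.1 and 2.6.4): $H\sim kE_1+E_2$ with $E_1\cdot E_2=1$, and the base points are simple. You can also get this directly, as follows.
\begin{itemize}
\item \emph{Structure.} Suppose $H\cdot E_1=1$ and $H^2=2k$. Then $\chi(H-kE_1)=1$. Also $h^2(H-kE_1)=h^0(K_S+kE_1-H)=0$, because $(K_S+kE_1-H)\cdot H=-k<0$ and $H$ is nef. So $H-kE_1$ is effective and isotropic with $(H-kE_1)\cdot E_1=1$. Since $S$ is unnodal, it is a half-fibre $E_2$.
\item \emph{Base locus.} Restrict $|H|$ to $E_1$, to $E_1'$, and to the remaining fibres of $|2E_1|$. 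Writing $E_2'\sim E_2+K_S$, this shows the base locus is $\{E_1\cap E_2,\ E_1'\cap E_2\}$ for $k$ even, and $\{E_1\cap E_2',\ E_1'\cap E_2\}$ for $k$ odd.
\item \emph{Smoothness at the base points.} Let $F_i$ be general fibres of $|2E_1|$. The members $E_2+F_1+\dots+F_{k/2}$ ($k$ even), respectively $E_1'+E_2'+F_1+\dots+F_{(k-1)/2}$ ($k$ odd), are smooth at both base points. Hence the general member is smooth there.
\end{itemize}
Bertini away from the base locus, together with your connectedness argument, then completes the proof.
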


\begin{proof}
By the decomposition~\eqref{decomposition}, we may write
\[
H \sim a_1E_1+\cdots+a_nE_n+\epsilon E'_1,
\]
where the $E_i$ are smooth elliptic curves satisfying
\eqref{intersectionelliptic}, $\epsilon\in\{0,1\}$, and
$E'_1\sim E_1+K_S$ is the other half-fibre of the elliptic fibration
associated with $|2E_1|$. Assume first that $\phi(H)\geq 2$. Then, by
\eqref{inequalityHquadro}, one has $H^2\geq \phi(H)^2>0$. Hence, by Bertini and Theorem~\ref{etoegkveraglob}, a general element
$C\in |H|$ is smooth and irreducible. It remains to consider the cases in which either $H^2=0$, or
$\phi(H)=1$.

First suppose that $H^2=0$. Then either $H\sim mE_1$, or $H\sim mE_1 +E_1'$. Consider first the case $H\sim mE_1$.  When 
$m=1,2$, a general element of $|mE_1|$ is a smooth irreducible
elliptic curve, by Remark~\ref{remarksinguell}. On the other hand, if
$m\geq 3$, then the  general element of $|mE_1|$ is reducible. More
precisely, if $m=2m'$, $m' \geq 2$,  then a general element is the disjoint union of
$m'$ (smooth) fibres of the elliptic fibration associated with $|2E_1|$. This follows easily using that  $h^0(S,2m'E_1)=m'+1$ (by \cite{CDL}[Corollary 2.9]), together with the fact that  the dimension of $\sum_{i=1}^{m'} |2E_1|\subset |2m'E_1|$ is $m'$. If
$m=2m'+1$, $m' \geq 1$ then a general element is the disjoint union of the fixed
half-fibre $E_1$, and $m'$ fibres of the  elliptic fibration. This use again \cite{CDL}[Corollary 2.9].

Now suppose that $H\sim mE_1 +E_1'$, $m \geq 1$. If $m=2m'$, then we conclude again applying \cite{CDL}[Corollary 2.9]),  and using that $\sum_{i=1}^{m'} |2E_1|+E_1' \subset |2m'E_1+E_1'|$ has dimension $m'$. If  $m=2m'+1$, then $H\sim mE_1 +E_1' \sim 2m'E_1+E_1+E_1'$. A similar computation as in \cite{CDL}[Corollary 2.9]), shows that  $h^0(S,2m'E_1+E_1+E_1')=m'+1$. Then we conclude observing that the subspace $\sum_{i=1}^{m'}|2E_1| + E_1 + E_1' \subset |H|$ has dimension $m'$.

Now suppose that $\phi(H)=1$. Since $S$ is very general (in particular it does not contain any smooth rational curve), by   \cite{CDL}[Propositions 2.6.1],   $H\sim mE_1+E_2$ with $E_1\cdot E_2=1$.  By  \cite{CDL}[Propositions 2.6.1 and 2.6.4] the base point are simple (that is the multiplicity of the (scheme theoretic) base locus at each point is $1$). This implies that there exists $C \in |H|$ which is smooth at the base points.  Thus a  general $C \in |H|$ is  smooth and irreducible by Bertini.
\end{proof}

Let $S$ be a very general Enriques surface, and let $H \neq \mathcal{O}_S$ be an effective line bundle on $S$. By  Lemma \ref{remarksmoothcurve} we can assume that the general element of $H$ is either a smooth irreducible curve, or  a disjoint union of smooth elliptic curves. The main result of the section is the following:

\begin{teo}
\label{maintheorem1}
Let $S$ be a very general Enriques surface, and let $H$ be an effective line bundle on $S$. Let $C \in |H|$ be a general smooth curve. Then $\Omega_S|_{C}$ is semistable.
\end{teo}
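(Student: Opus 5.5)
We argue by degeneration, following the strategy outlined in the introduction. Write $H$ as in \eqref{decomposition},
\[
H\sim a_1E_1+\cdots+a_nE_n+\epsilon E_1',
\]
with the $E_i$ smooth elliptic curves, and induct on the length $\ell:=a_1+\cdots+a_n+\epsilon$ and on the type of the decomposition. Since $c_1(\Omega_S)=K_S$ is numerically trivial, semistability of $\Omega_S|_C$ amounts to the absence of quotient line bundles of negative degree. Semistability is an open condition in flat families of (possibly nodal) curves, once it is phrased via the adjusted slope of \cite{CLV}. It therefore suffices to exhibit, for each $H$, a single nodal member $C_0\in|H|$, smoothable inside $|H|$, on which $\Omega_S|_{C_0}$ is semistable in the adjusted sense. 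By Lemma~\ref{remarksmoothcurve}, when the general member is a disjoint union of elliptic curves it is enough to check each component.

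\emph{Base case.} The first step is to show that $\Omega_S|_E$ is semistable for every smooth half-fibre or fibre $E$. Restricting the cotangent sequence gives
\[
0\to N_E^\vee\to\Omega_S|_E\to\omega_E\to 0 .
\]
Here $N_E=\mathcal O_E(E)$ is torsion of degree $0$, and $\omega_E\simeq\mathcal O_E$, so $\Omega_S|_E$ is an extension of two degree-$0$ line bundles and is therefore semistable. This also covers the cases $H\sim mE_1$ and $H\sim mE_1+E_1'$.

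\emph{Inductive step.} Suppose $H=H'+E$, where $E$ is one of the elliptic curves appearing in the decomposition and $H'$ has smaller length. Choose a general $C'\in|H'|$, which by induction is smooth with $\Omega_S|_{C'}$ semistable, and a general $E\in|E|$ (or a translate within $|2E|$ when multiplicities force it) meeting $C'$ transversally in $H'\cdot E$ points. The curve $C_0=C'\cup E$ is then nodal. The gluing criterion of \cite{CLV} (Lemma~\ref{adjustednodalcurvesmultiples}) says: if the restrictions to both components are semistable, then the restriction to $C_0$ is semistable with respect to the adjusted slope, since any destabilizing subsheaf on $C_0$ would restrict to subsheaves on the components whose slopes, corrected by the node contributions, cannot exceed $0$. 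Openness of semistability together with smoothability of $C_0$ in $|H|$ (via $h^1$-vanishing, or Theorem~\ref{etoegkveraglob} when $\phi\ge2$) then yields the claim for general $C\in|H|$. The order in which $E_i$'s are peeled off has to be chosen so that $H'$ stays in a range where the general element is smooth and irreducible, or at least controlled by Lemma~\ref{remarksmoothcurve}.

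\emph{Main obstacle.} The hard step is the low-length configurations in which the two curves do not meet transversally. The key case is $E_1\cdot E_2=2$ with $H\sim E_1+E_2$, where a general pair may be forced to be tangent, or where the required nodal member of $|H|$ does not exist among such unions. Here the adjusted slope argument does not apply directly. The first approach would be to treat the tangent curve $E_1\cup E_2$, which has an $A_3$ singularity, using Gieseker semistability of the restricted sheaf on the non-reduced-intersection curve: check by hand that the only candidate destabilizing quotients, namely $\omega_{E_i}$ glued along the tangency scheme, have nonnegative degree, and then deform. The remaining small cases, such as $H\sim E_1+E_1'$, $2E_1+E_2$ and $E_1+E_2+E_3$, would be handled directly in the same manner before running the induction.
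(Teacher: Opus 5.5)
Your proposal follows essentially the same route as the paper. It inducts on the length of the isotropic decomposition, degenerates to connected nodal curves whose components are elliptic or of lower length, glues them with the Coskun--Larson--Vogt adjusted slope plus openness, and treats the tacnodal curve $E_1\cup E_2$ with $E_1\cdot E_2=2$ separately via Gieseker semistability. What you leave as a promissory note is exactly the case analysis the paper carries out in its inductive step and in Lemma~\ref{lemma5}: choosing which half-fibre to peel off so that $C'\cup E$ is nodal and connected, given that half-fibres are rigid ($h^0(E)=1$), that $|H'|$ has base points when $\phi(H')=1$, and that $H'\cdot E$ may vanish.
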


Of course, when a general member of $|H|$ is a disjoint union of smooth
irreducible curves, we mean that the restriction of $\Omega_S$ to each
curve is semistable. The strategy of the proof is quite simple: we proceed by induction on the number of half-pencils in the decomposition \eqref{decomposition}. For each possible length of the decomposition, we construct a nodal curve $C\in |H|$ (or, in one case, a curve with a tacnode) such that $\Omega_S|_C$ is semistable. This will imply that the restriction of $\Omega_S$ to a general member of $|H|$ is semistable.

\medskip
\noindent\textbf{The nodal case}
\medskip

 To deal with the nodal cases, we  will use the following notion of adjusted slope introduced by Coskun, Larson and Vogt (\cite[Definition 2.1]{CLV}). 
\begin{defin}
Let $E$ be a vector bundle on a connected nodal curve $C$. Let $\nu: \tilde C \rightarrow C$ be the normalization. For a subbundle $F \subset \nu^* E$, define the adjusted slope 
\begin{equation}
\label{adjustedslope}
\mu_{C}^{adj}(F):=\mu(F)-\frac{1}{rkF}\sum_{p \in C_{sing}}codim_F(F_{|\tilde p_1} \cap F_{|\tilde p_2}),
\end{equation}
where  $\tilde{p}_1$ and $\tilde{p}_2$ are the preimages of the node $ p \in C$, and $codim_F(F_{|\tilde p_1} \cap F_{|\tilde p_2})$ refers to the codimension of the intersection in either $F_{|\tilde p_1}$ or $F_{|\tilde p_2}$. We say that $E$ is (semi)stable if for all subbundles $F \subset \nu^*E$ 
$$
\mu^{adj}_C(F) < (\leq) \mu^{adj}_C(\nu^*E)(=\mu(\nu^*E)).
$$
\end{defin}
\begin{remark}
\label{opensem}
With this definition we have that semistability is open in families of connected nodal curves  (\cite[Proposition 2.3]{CLV}).
\end{remark}
We start with the following  basic lemma.

\begin{lemma}
\label{adjustednodalcurves}
Let $C'_1$, $C'_2$ be two smooth irreducible curves on an Enriques surface $S$ intersecting transversely at $n \geq 1$ points. Suppose that $\Omega_{S}|_{C'_i}$ is semistable for  $i=1,2$. Set $C:=C'_1 \cup C'_2$. Then $\Omega_{S}|_{C}$ is semistable.
\end{lemma}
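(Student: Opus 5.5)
The plan is to check the definition of adjusted-slope semistability directly on the normalization. The key point is that all the relevant degrees vanish because $K_S$ is numerically trivial. I will not need the correction term in \eqref{adjustedslope} beyond the fact that it is non-negative.

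Let $\nu\colon \tilde C=C'_1\sqcup C'_2\to C$ be the normalization. Since $C$ has only the $n$ transverse intersection points as singularities, $\nu$ separates each node $p$ into $\tilde p_1\in C'_1$ and $\tilde p_2\in C'_2$. Then
\[
\nu^*\Omega_S|_C=\Omega_S|_{C'_1}\sqcup \Omega_S|_{C'_2}.
\]
Its degree is $K_S\cdot C'_1+K_S\cdot C'_2=0$, because $K_S$ is torsion in $\Pic(S)$ and hence numerically trivial. Thus $\mu^{adj}_C(\nu^*\Omega_S)=\mu(\nu^*\Omega_S)=0$. For the same reason each $\Omega_S|_{C'_i}$ has degree $0$, so the hypothesis says that every line subbundle of $\Omega_S|_{C'_i}$ has degree $\le 0$.

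Next I would classify the subbundles $F\subset\nu^*\Omega_S$ of rank strictly between $0$ and $2$. In the setting of \cite{CLV}, $F$ has constant rank on $\tilde C$; this is needed for the codimension at the two preimages of a node to make sense. So $F=L_1\sqcup L_2$ with $L_i\subset\Omega_S|_{C'_i}$ a line subbundle. Then
\[
\mu(F)=\deg L_1+\deg L_2\le 0
\]
by semistability on each component. The correction term in \eqref{adjustedslope} is a sum of non-negative codimensions, so
\[
\mu^{adj}_C(F)\le\mu(F)\le 0=\mu^{adj}_C(\nu^*\Omega_S),
\]
which is exactly semistability of $\Omega_S|_C$.

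The only point needing care is the convention for subbundles of $\nu^*E$ on a disconnected normalization. If one also allows $F$ supported on a single component, say $F=L_1$ on $C'_1$ and $0$ on $C'_2$, the argument is unchanged. Its degree is still $\le 0$, its adjusted slope is at most its slope, and so the inequality persists. Consequently I do not expect any genuine obstacle. The main step is simply the observation that numerical triviality of $K_S$ makes all target slopes zero, so componentwise semistability suffices and the gluing data at the $n$ nodes can only help.
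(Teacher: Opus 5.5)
Your proof is correct and matches the paper's: drop the non-negative correction term to get $\mu^{adj}_C(F)\le\mu(F)$, then bound the degree of $F$ component by component using semistability of $\Omega_S|_{C_i'}$. The only difference is that you use $K_S\equiv 0$ to make every target slope zero, while the paper uses additivity of slopes over $C_1\sqcup C_2$, which does not need $K_S$ to be numerically trivial.
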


\begin{proof}
Let $ \nu: \tilde S \ra S$ be the blowup of $S$ at the $n$ points of intersection of $C'_1$ and $C_2'$, and let $\tilde C \subset \tilde S$
be the strict transform of $C$. Then
$\nu_{|\tilde C}:\tilde C\ra C$ is the normalization map. Observe that $\tilde C=C_1 \sqcup C_2$ where $C_1 \simeq C_1'$ and  $C_2 \simeq C_2'$. Let $ F \subset \nu^*\Omega_{S}|_{\tilde C}$ a sub line bundle. We have: 
\begin{align*}
\mu^{adj}_C(F) \leq   \mu(F|_{C_1}) +  \mu(F|_{C_2})
\leq  \mu(\nu^*\Omega_{S}|_{C_1}) +  \mu(\nu^*\Omega_{S}|_{C_2}) &=\mu(\nu^*\Omega_{S}|_{ \tilde C}) \\
&=\mu^{adj}_C(\nu^*\Omega_{S}|_{ \tilde C})
\end{align*}
Observe that the first inequality follows from \eqref{adjustedslope} and the additivity of  the classical slope.  For the second inequality we have used that $\nu_{|C_i}: C_i \ra C'_i$ is an isomorphism, and the semistability of $\nu^*\Omega_{S}|_{C_i}$. Hence $\Omega_{S}|_C$ is semistable.
\end{proof}

The previous lemma has the  following immediate  generalization:
\begin{lemma}
\label{adjustednodalcurvesmultiples}
Let $C=C_1\cup...\cup C_k \subset S$ be a connected nodal curve whose
irreducible components are smooth. Suppose that $\Omega_S|_{C_i}$
is semistable for every $i$. Then $\Omega_S|_C$ is semistable.
\end{lemma}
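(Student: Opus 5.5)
The plan is to copy the proof of Lemma~\ref{adjustednodalcurves}, now for an arbitrary number of components. Let $\nu:\tilde S\ra S$ be the blowup of $S$ at the nodes of $C$, and let $\tilde C\subset\tilde S$ be the strict transform of $C$. Since every component $C_i$ is smooth and $C$ is nodal, $\nu|_{\tilde C}:\tilde C\ra C$ is the normalization. Moreover $\tilde C=\tilde C_1\sqcup\cdots\sqcup\tilde C_k$ with $\tilde C_i\simeq C_i$. Here each node is the intersection of two distinct components, because the components are smooth, so its two preimages lie on different $\tilde C_i$.

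Let $F\subset\nu^*\Omega_S|_{\tilde C}$ be a subbundle of rank $1$; subbundles of rank $0$ and $2$ are trivial to handle.

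\begin{itemize}
\item By \eqref{adjustedslope}, the correction terms are nonnegative, so $\mu^{adj}_C(F)\le\mu(F)$.
\item By additivity of degree on the disconnected curve $\tilde C$, with the convention used in Lemma~\ref{adjustednodalcurves} that the slope on $\tilde C$ is the total degree divided by the rank, we have $\mu(F)=\sum_i\deg(F|_{\tilde C_i})=\sum_i\mu(F|_{\tilde C_i})$.
\item By the semistability hypothesis and the isomorphism $\tilde C_i\simeq C_i$, each term satisfies $\mu(F|_{\tilde C_i})\le\mu(\nu^*\Omega_S|_{\tilde C_i})$.
\end{itemize}

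Summing these gives
\[
\mu^{adj}_C(F)\le\sum_i\mu(\nu^*\Omega_S|_{\tilde C_i})=\mu(\nu^*\Omega_S|_{\tilde C})=\mu^{adj}_C(\nu^*\Omega_S|_{\tilde C}),
\]
which is semistability.

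The only point needing care is the bookkeeping of the slope on the disconnected normalization. One must make sure the normalization of the slope in \eqref{adjustedslope} matches the convention of Lemma~\ref{adjustednodalcurves}, so that the sum of the componentwise inequalities is exactly the required one. Connectedness of $C$ is used only so that the adjusted slope framework of \cite{CLV} applies, including openness as in Remark~\ref{opensem}; it plays no role in the inequality itself. Alternatively, one could induct on $k$ using Lemma~\ref{adjustednodalcurves}, but the direct argument above avoids having to treat the intermediate reducible curves.
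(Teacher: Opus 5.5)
Your proof is correct and follows the paper's proof: the paper simply says the argument of Lemma~\ref{adjustednodalcurves} goes through verbatim with the sums taken over all irreducible components. That is what you carry out: you drop the nonnegative node corrections, use additivity of degree over the disjoint components of $\tilde C$ (rank being constant), and apply semistability on each component.
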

\begin{proof}
The proof is identical to that of Lemma~\ref{adjustednodalcurves}, with the sums taken over all irreducible components of $C$.
\end{proof}

Now we specialize to the situation where the $E_i:=C_i'$  of the previous lemma are smooth elliptic curves.  The following is very well-known.

\begin{lemma}
\label{ellipticfibration}
Let $E$ be an elliptic curve on an Enriques surface $S$. Then $\Omega_{S}|_{E}$ is semistable (but not stable).
\end{lemma}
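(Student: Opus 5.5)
The plan is to use the conormal sequence of $E \subset S$ together with the triviality of the normal bundle of a half-fibre. Since $E$ is a smooth elliptic curve with $E^2=0$, adjunction gives $\deg N_{E/S}=E^2=0$. We have $\omega_E\cong\mathcal{O}_E$ and $K_S|_E\otimes N_{E/S}\cong\omega_E$. Hence $N_{E/S}\cong K_S|_E^{-1}$, which is a $2$-torsion line bundle on $E$. The conormal sequence
\[
0\ra N_{E/S}^{\vee}\ra \Omega_S|_E\ra \omega_E\ra 0
\]
then exhibits $\Omega_S|_E$ as an extension of $\mathcal{O}_E$ by the degree-zero line bundle $N^\vee_{E/S}\cong K_S|_E$. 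In particular $\deg \Omega_S|_E=0$.

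For semistability, let $L\subset \Omega_S|_E$ be a sub line bundle. If $L\subset N_{E/S}^\vee$, then $\deg L\le 0$. Otherwise the composite $L\ra\omega_E\cong\mathcal{O}_E$ is nonzero, so again $\deg L\le 0$. Thus every sub line bundle has slope at most $0=\mu(\Omega_S|_E)$, and $\Omega_S|_E$ is semistable. This holds for any extension of two degree-zero line bundles, so nothing about the extension class is needed here.

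For non-stability, it suffices to note that $N^\vee_{E/S}$ is itself a degree-zero sub line bundle of $\Omega_S|_E$. Its slope equals $\mu(\Omega_S|_E)=0$, so $\Omega_S|_E$ is strictly semistable and not stable.

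The main thing to check is the identification of $N_{E/S}$ with a degree-zero line bundle. This needs only adjunction and $E^2=0$; the finer fact that $N_{E/S}\cong K_S|_E$ is nontrivial 2-torsion for a half-fibre (Remark~\ref{remarksinguell}) is not needed. If $E$ were instead a full smooth fibre of the elliptic fibration, $N_{E/S}$ would be trivial, and the same argument still applies verbatim.
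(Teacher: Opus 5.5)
Your proof is correct and follows the same route as the paper: the conormal sequence $0\to N_{E/S}^{\vee}\to\Omega_S|_E\to\omega_E\to 0$ with both outer terms of degree zero, and a sub line bundle either lying in $N_{E/S}^{\vee}$ or mapping nontrivially to $\omega_E\cong\mathcal{O}_E$. You also spell out the failure of stability via the degree-zero subbundle $N_{E/S}^{\vee}$, and the fact that $E^2=0$ (which follows from adjunction because $K_S$ is numerically trivial), whereas the paper leaves both points implicit.
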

\begin{proof}
Let $L  \xhookrightarrow{}   \Omega_{S}|_{E}$ be a line bundle. Consider the conormal exact sequence:
$$
0 \rightarrow \mathcal{O}_{E}(-E) \rightarrow \Omega_{S}|_{E}\rightarrow \omega_{E}  \rightarrow 0.
$$
 Then either $L \hookrightarrow \mathcal{O}_{E}(-E)$, or   $L \hookrightarrow \omega_{E}$. Since $\operatorname{deg}(\Omega_{S}|_{E})=\operatorname{deg}(\omega_E)=\operatorname{deg}(\mathcal{O}_{E}(-E))=0$, we immediately conclude that $\Omega_{S}|_{E}$ is semistable. 
\end{proof}

\begin{cor}
\label{lemmablowupelliptic0}
Let $S$ be an Enriques surface, and let $C$ be a nodal curve consisting of two smooth irreducible elliptic curves $E_1$ and $E_2$ intersecting transversely in $n \geq 1$ distinct points.  Then $\Omega_{S}|_C$ is semistable.
\end{cor}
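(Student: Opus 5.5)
This is a direct combination of Lemma~\ref{ellipticfibration} and Lemma~\ref{adjustednodalcurves}, so the plan is simply to check that the hypotheses of the latter are satisfied.

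First, $C = E_1 \cup E_2$ is connected and nodal. It is connected because $n \geq 1$, so the two components meet. It is nodal because the intersection is transverse at $n$ distinct points and each $E_i$ is smooth, so every singular point of $C$ is an ordinary node with exactly two branches, one on each component. The normalization of $C$ is therefore $E_1 \sqcup E_2$, which is exactly the setting of Lemma~\ref{adjustednodalcurves} with $C_i' = E_i$.

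Second, each $E_i$ is a smooth irreducible elliptic curve on $S$. By Lemma~\ref{ellipticfibration}, $\Omega_S|_{E_i}$ is semistable of degree $0$: the conormal sequence shows that any sub line bundle injects into either $\mathcal{O}_{E_i}(-E_i)$ or $\omega_{E_i}$, and both have degree $0$. Applying Lemma~\ref{adjustednodalcurves} then gives semistability of $\Omega_S|_C$ in the adjusted-slope sense of Coskun--Larson--Vogt.

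The only point to check carefully is the inequality chain in the proof of Lemma~\ref{adjustednodalcurves} in this particular situation. For a sub line bundle $F \subset \nu^*\Omega_S|_{E_1 \sqcup E_2}$, the correction terms in the adjusted slope are nonnegative, so $\mu^{adj}_C(F) \leq \deg F|_{E_1} + \deg F|_{E_2}$. Each summand is at most $0$ by semistability on $E_i$. Hence $\mu^{adj}_C(F) \leq 0 = \mu(\nu^*\Omega_S|_C)$.

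The rank-two subbundle case is trivial. Beyond that there is no real obstacle, since the nodal hypothesis is exactly what the adjusted slope requires.
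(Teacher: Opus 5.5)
Your proposal is correct and is the paper's argument: the paper proves the corollary by combining Lemma~\ref{ellipticfibration} (semistability of $\Omega_S|_{E_i}$, of degree $0$) with Lemma~\ref{adjustednodalcurves}, whose proof is the same adjusted-slope inequality chain you write out. Your check that $C$ is connected and nodal with normalization $E_1 \sqcup E_2$ only makes explicit the hypotheses that the paper leaves implicit.
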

\begin{proof}
This follows from Lemma \ref{ellipticfibration}, together with Lemma \ref{adjustednodalcurves}.
\end{proof}

\medskip
\noindent\textbf{Curves with an ordinary tacnode} 
\medskip

There is another basic case we have to deal with, i.e. the case of reducible curves consisting of two irreducible smooth curves tangent at a point with intersection multiplicity $2$. To deal with this we use the notion of semistability (with respect to a polarization) given by the Hilbert polynomial (see \cite{HL}[Definition 1.2.4]). This is also sometimes called Gieseker semistability.  We recall the definition.
\\
\

Let $C$ be a reducible (but reduced) curve. Let $H$ be an ample invertible sheaf on $C$. Let $G$ be a coherent sheaf on $C$ of dimension $1$. Then the Hilbert polynomial of $G$ is
\[
P(G,m)=\chi(G \otimes(mH))=\alpha_0(G)+ \alpha_1(G)m,
\]
where $\alpha_0(G)=\chi(G)$ is the Euler characteristic, and $\alpha_1(G) >0$ is called multiplicity. The reduced Hilbert polynomial is defined as:
\[
p(G):=\frac{P(G,m)}{\alpha_1(G)}.
\]
We recall that a pure, dimension $1$ coherent sheaf on $C$ is semistable if for any  proper subsheaf $G' \subset G$, one has $p(G') \leq p(G)$, or equivalently:
\[
\frac{\chi(G')}{\alpha_1(G')} \leq \frac{\chi(G)}{\alpha_1(G)}.
\]
 If $G$ is a pure coherent sheaf of dimension one on a reduced curve $C$, we will refer to $\frac{\alpha_0(G)}{\alpha_1(G)}$ as the slope of $G$. The following proof is very similar to that of \cite[Lemma 1.2.1]{HL}.

\begin{lemma}
\label{lemmaalpha1}
  Let $G$ be a pure one dimensional sheaf on a reduced curve $C$. Suppose that $C$ has two irreducible smooth components: $C_1$, $C_2$.  Set $h_i:=H \cdot C_i$, and let $r_i$ be the generic rank of $G|_{C_i}$.  Then 
\[
\alpha_1(G)=r_1h_1+r_2h_2.
\]
\end{lemma}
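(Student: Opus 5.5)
The idea is to read off $\alpha_1(G)$ as the leading coefficient of $\chi(G\otimes H^m)$ in $m$, and to compute it by comparing $G$ with a sheaf that splits along the two components. As in the proof of \cite[Lemma 1.2.1]{HL}, I would use that the leading coefficient of the Hilbert polynomial is additive in short exact sequences. It is also unchanged by modifications supported in dimension zero, since a zero-dimensional sheaf $T$ has $\chi(T\otimes H^m)=\chi(T)$, which is constant in $m$.

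Let $G_i:=G|_{C_i}/\mathrm{torsion}$ for $i=1,2$. Each $G_i$ is a torsion-free sheaf on the smooth irreducible curve $C_i$, hence locally free of rank $r_i$, and it is pushed forward to $C$. Consider the natural map
\[
\varphi\colon G\longrightarrow G_1\oplus G_2 .
\]
Away from the finite set $C_1\cap C_2$ the curve $C$ is smooth and is locally just $C_1$ or $C_2$, so $\varphi$ is an isomorphism there. Hence $\ker\varphi$ and $\operatorname{coker}\varphi$ are supported on finitely many points. Because $G$ is pure of dimension one, $\ker\varphi$, being a zero-dimensional subsheaf of $G$, must vanish. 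The coker is zero-dimensional, so it contributes only a constant to the Hilbert polynomial. Therefore
\[
\alpha_1(G)=\alpha_1(G_1)+\alpha_1(G_2).
\]

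On $C_i$, Riemann--Roch for the rank-$r_i$ vector bundle $G_i\otimes H^m|_{C_i}$ gives
\[
\chi(G_i\otimes H^m)=\deg G_i+r_i\,m\,h_i+r_i(1-g(C_i)).
\]
So $\alpha_1(G_i)=r_ih_i$, and summing the two contributions yields $\alpha_1(G)=r_1h_1+r_2h_2$.

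The only delicate point is the injectivity of $\varphi$, together with the fact that the generic rank of $G|_{C_i}$ equals the rank of $G_i$. Both follow once we note that at the generic point of $C_i$ the sheaf $G$ agrees with $G|_{C_i}$, since $C$ is reduced and is locally $C_i$ there.
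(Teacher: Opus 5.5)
Your argument is correct, but it takes a different route from the paper's.

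\textbf{The paper's route.} The paper computes $\alpha_1(G)$ by a hyperplane-section argument in the style of \cite[Lemma 1.2.1]{HL}. For $m\gg 0$ it takes a general $D\in|mH|$ that is reduced and $G$-regular, so that $0\to G(-mH)\to G\to G|_D\to 0$ gives $\chi(G\otimes mH)-\chi(G)=\chi(G|_D)$. Since $D$ meets each $C_i$ in $mh_i$ distinct points where $G$ is locally free of rank $r_i$, this length is $m(r_1h_1+r_2h_2)$.

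\textbf{Your route.} You compare $G$ with $G_1\oplus G_2$ through a map that is an isomorphism at the generic points of the components. You then apply Riemann--Roch on each smooth component.

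\textbf{What each buys.} The paper's argument needs a good choice of divisor: global generation of $mH$, plus avoidance of finitely many bad points. In exchange it requires no local analysis at $C_1\cap C_2$ and no Riemann--Roch on the components. Yours is more intrinsic and also recovers the constant term, $\chi(G)=\chi(G_1)+\chi(G_2)-\ell(\operatorname{coker}\varphi)$. This is exactly the kind of identity the paper later uses in the proof of Lemma \ref{ordinarytacnode}, namely $\chi(K)=\deg F_1+\deg F_2-\ell(Q)$.

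\textbf{Two small remarks on your write-up.} First, the claim that $\varphi$ is an isomorphism on $C\setminus(C_1\cap C_2)$ silently uses purity. Near a point of $C_1\setminus C_2$ the map $\varphi$ is just $G\to G/\mathrm{torsion}$, so it is an isomorphism there only because $G$ has no zero-dimensional torsion. Second, injectivity of $\varphi$ is not actually needed. A zero-dimensional kernel would only change the constant term of the Hilbert polynomial, not $\alpha_1$. All you need is that $\varphi$ is an isomorphism at the two generic points, which is what you verify in your last sentence.
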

\begin{proof}
For $ m >> 0$ $mH$ is globally generated. By  \cite{HL}[Lemma 1.1.12] the general $D \in |mH|$ is a reduced divisor, and is $G-$ regular (that is $G(-mH) \rightarrow G$ is injective).  In particular we have:
\[
0 \rightarrow G(-mH) \rightarrow G \rightarrow G|_{D} \rightarrow 0.
\]
Then after twisting by $\mathcal{O}_C(mH)$, we get:
\[
\alpha_1(G)=\frac{P(G,m)-P(G,0)}{m}=\frac{\chi(G \otimes mH) - \chi (G)}{m}=\frac{\chi(G\otimes mH|_{D})}{m}=\frac{r_1mh_1+r_2mh_2}{m}
\]
\[
=r_1h_1+r_2h_2
\]
Observe that we have used that for $m >>0$ a general $D \in |mH|$ meets $C_i$ in $mh_i$ distinct points contained in the open subscheme $U_i$ such that $G|_{U_i}$ is locally free of rank $r_i$. 
\end{proof}

\begin{lemma}
\label{ordinarytacnode}
Let $S$ be an Enriques surface, and let $E_1,E_2\subset S$ be two smooth elliptic curves.  Assume  that their scheme-theoretic intersection
\[
Z:=E_1 \cap E_2
\]
is a zero-dimensional subscheme of length $2$ supported at a single point $p \in S$.  Let $H$ be  an ample line bundle on $S$ such that $H \cdot E_1= H \cdot E_2$. Set $E:=E_1 \cup E_2$. Then $\Omega_{S}|_E$ is $H-$ semistable. 
\end{lemma}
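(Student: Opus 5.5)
Let $G:=\Omega_S|_E$, a locally free sheaf of rank $2$ on the reduced curve $E=E_1\cup E_2$, with generic ranks $r_1=r_2=2$. By Lemma \ref{lemmaalpha1}, $\alpha_1(G)=2(h_1+h_2)=4h$, where $h:=H\cdot E_1=H\cdot E_2$. I will compute $\chi(G)$, then bound $\chi(G')/\alpha_1(G')$ for an arbitrary saturated subsheaf $G'\subset G$. Since $E^2=E_1^2+E_2^2+2E_1E_2=4$ and $K_S\equiv 0$, the curve $E$ has arithmetic genus $3$, so $\chi(\mathcal{O}_E)=-2$. Because $c_1(\Omega_S)\cdot E=0$, Riemann--Roch on $E$ gives $\chi(G)=2\chi(\mathcal{O}_E)=-4$. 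The target inequality is therefore
\[
\frac{\chi(G')}{\alpha_1(G')}\leq -\frac{1}{h}.
\]

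We may take $G'$ saturated, so $G/G'$ is pure. Write $r'_i\in\{0,1,2\}$ for the generic rank of $G'$ on $E_i$; the cases where $r'_1$ and $r'_2$ are both $0$ or both $2$ are trivial. For each $i$, let $G'_i$ be the image of $G'$ in $G|_{E_i}$ modulo torsion, a subsheaf of $\Omega_S|_{E_i}$ of rank $r'_i$. Since $E=E_1\cup E_2$ with $\mathcal{O}_E\to\mathcal{O}_{E_1}\oplus\mathcal{O}_{E_2}$ having cokernel $\mathcal{O}_Z$ of length $2$, there is an injection $G'\hookrightarrow G'_1\oplus G'_2$ with cokernel of length at most $2\min(r'_1,r'_2)$, coming from $G\otimes \mathcal{O}_Z$. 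This gives
\[
\chi(G')\le \chi(G'_1)+\chi(G'_2)-\ell,
\]
where $\ell$ is the gluing length. By Lemma \ref{ellipticfibration}, every rank-one subsheaf of $\Omega_S|_{E_i}$ has degree $\le 0$, so $\chi(G'_i)\le 0$ on the elliptic curve $E_i$ (and $\chi(G'_i)=0$ when $r'_i=2$).

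The cases are then:
\begin{itemize}
\item $(r'_1,r'_2)=(1,0)$ or $(0,1)$. Here $G'$ is supported on one $E_i$ and is a subsheaf of the kernel of $G\to G|_{E_j}$, namely $\Omega_S|_{E_i}(-Z)$, of degree $-2$. So $\chi(G')\le -1$ and $\alpha_1(G')=h$, giving ratio $\le -1/h$.
\item $(2,0)$ or $(0,2)$. Similarly $G'\subset \Omega_S|_{E_i}(-Z)$, so $\chi(G')\le -4$ and $\alpha_1(G')=2h$, giving ratio $\le -2/h$.
\item $(1,1)$. Here $\alpha_1(G')=2h$ and we need $\chi(G')\le -2$. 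Gluing two sub-line bundles of degree $\le 0$ along the length-$2$ scheme $Z$ costs $2$, so $\chi(G')\le 0+0-2=-2$.
\item $(2,1)$ or $(1,2)$. Here $\alpha_1(G')=3h$ and we need $\chi(G')\le -3$. This follows from $\chi\le 0+0-2$ only if the gluing costs at least $3$. Alternatively, it follows by duality from the fact that the quotient $G/G'$ has rank pattern $(0,1)$: it is a pure sheaf on one $E_j$ and a quotient of $\Omega_S|_{E_j}$, so it has degree $\ge 0$. Hence $\chi(G/G')\ge 0$ and $\chi(G')\le -4$, which gives ratio $-4/(3h)\le -1/h$.
\end{itemize}

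The main obstacle is the $(1,1)$ case. There I have to justify that the gluing cokernel really has length $2$, and not less, when the two line subbundles agree along $Z$. I would argue locally at $p$: $\mathcal{O}_{E,p}$ is the tacnode ring, and $G'$ is a rank-one sheaf on it whose restrictions give subbundles. The cokernel of $G'\to G'_1\oplus G'_2$ has length $\ge \delta_p=2$ whenever $G'$ is locally free of rank one at $p$. If $G'$ is not locally free there, saturatedness gives a smaller subsheaf but a larger degree drop. I would handle this by the same quotient trick: $G/G'$ is torsion-free of pattern $(1,1)$ and a quotient of $G$. Its restriction to each $E_i$, modulo torsion, has degree $\ge 0$, and the gluing gives $\chi(G/G')\ge \chi(\text{quot}_1)+\chi(\text{quot}_2)-2\ge -2$. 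Hence $\chi(G')=-4-\chi(G/G')\le -2$, which avoids the delicate local analysis.
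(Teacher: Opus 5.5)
The case split itself is sound. You reduce to saturated $G'$, split by the generic ranks $(r_1',r_2')$, and treat $(2,1)$ through the quotient $G/G'$, which is a line-bundle quotient of $\Omega_S|_{E_j}$. The cases $(1,0)$, $(2,0)$ and $(2,1)$ all go through. There is one small slip: $\Omega_S|_{E_i}(-Z)$ has degree $-4$ and slope $-2$, so in the $(1,0)$ case you actually get $\chi(G')\le -2$.

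The gap is the $(1,1)$ case, the only one where the tacnode really matters.

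Your first argument is false as stated, because the images $G_i'$ need not be subbundles of $\Omega_S|_{E_i}$, and then the gluing cost can be less than $2$. For example, take line subbundles $N_i\subset\Omega_S|_{E_i}$ with the same fibre at $p$ but different images in $\Omega_S|_Z$. The saturated subsheaf $G\cap(N_1\oplus N_2)$ then has images $N_i(-p)$ and gluing cost $1$. It is the extra degree drop that saves the inequality.

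Your fallback asserts $\chi(G/G')\ge\chi(M_1)+\chi(M_2)-2$, i.e.\ that $M=G/G'$ has colength at most $2$ in $M_1\oplus M_2$. This is again a statement about the gluing at $p$ of a sheaf that need not be locally free there. So the quotient trick does not remove the difficulty; it only moves it to $M$, and you give no argument for it.

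The step is easy to supply. Let $N_i\subset\Omega_S|_{E_i}$ be the saturation of $G_i'$ (equivalently $N_i=\ker(\Omega_S|_{E_i}\to M_i)$). Write $N_i|_Z$ for the image of $N_i\otimes\mathcal{O}_Z$ in $\Omega_S|_Z$. Since $G'$ is saturated, $G'=G\cap(N_1\oplus N_2)$ inside $\Omega_S|_{E_1}\oplus\Omega_S|_{E_2}$: the quotient is zero-dimensional and sits in the pure sheaf $G/G'$. Hence
\[
\chi(G')=\deg N_1+\deg N_2-\ell(N_1|_Z+N_2|_Z)\le 0+0-2,
\]
because $N_1$ is a line subbundle, so $N_1|_Z$ already has length $2$. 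Equivalently, the snake lemma applied to the gluing sequences of $G$ and $M$ gives $\ell((M_1\oplus M_2)/M)=4-\ell(N_1|_Z+N_2|_Z)\le 2$, which is exactly the inequality you asserted.

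The same computation gives $\chi(G')\le -2\max(r_1',r_2')\le-(r_1'+r_2')$ in every case, so once you use saturations the case split is unnecessary.

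The paper does not saturate; it keeps the images $F_i$. It bounds $\deg F_i\le -2r_i+\ell(A_i)$ using $\ker(F_i\to\Omega_S|_Z)\subset\Omega_S|_{E_i}(-Z)$, and then uses $A_j\subseteq Q$. That bookkeeping is precisely the trade-off between a smaller gluing cost and a larger degree drop that your $(1,1)$ case was missing.
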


\begin{proof}
There is a short exacst sequence in $E$  given as:
\begin{equation}
\label{restrictionell}
0 \rightarrow \mathcal{O}_{E} \rightarrow \mathcal{O}_{E_1} \oplus  \mathcal{O}_{E_2} \rightarrow \mathcal{O}_{Z} \rightarrow 0.
\end{equation}
Let $i: E \hookrightarrow S$ be the inclusion. Tensoring by $i^*\Omega_S$ we get:
\begin{equation}
\label{restrictionell2}
0 \rightarrow \Omega_S|_{E} \rightarrow \Omega_{S}|_{E_1} \oplus  \Omega_S|_{E_2} \rightarrow \Omega_S|_{Z} \rightarrow 0
\end{equation}
Using that $H^0(\Omega_S|_{Z}) \simeq k^{\oplus 4} $, additivity of $\chi$, and Riemann-Roch, we find: $\chi(\Omega_S|_{E})=-4$. Now observe that $\Omega_{S}|_E$ is pure of dimension $1$ on $E$. Indeed if $F \subset \Omega_S|_E$ is a proper subsheaf then \ref{restrictionell2} gives $F \xhookrightarrow{} \Omega_{S}|_{E_1} \oplus \Omega_{S}|_{E_2}$. Then we conclude using that $\Omega_{S}|_{E_i}$ is locally free. Then we can apply \ref{lemmaalpha1}. Together with   the fact that $h_1=H \cdot E_1=H \cdot E_2=h_2$ we get:
\[
\frac{\chi(\Omega_S|_{E})}{\alpha_1(\Omega_S|_E)}=\frac{-4}{4h}=\frac{-1}{h}.
\]
Now let $F \subset \Omega_{S|E}$ be a proper subsheaf. We need to show that 
\begin{equation}
\label{inequalitysemis}
   \frac{\chi(F)}{\alpha_1(F)} \leq \frac{-1}{h}. 
\end{equation}
First we  observe that $F$ is pure dimension $1$ (if not zero).   Indeed this immediately follows from that fact that $F \subset \Omega_{S}|_{E}$, and the latter is locally free. Then by \ref{lemmaalpha1}:
\[
\frac{\chi(F)}{\alpha_1(F)}=\frac{\chi(F)}{h(r_1+r_2)},
\]
where $r_i$ is the generic rank of $F|_{E_i}$. Then \eqref{inequalitysemis} is equivalent to
\[
\chi(F) \leq -(r_1+r_2).
\]
Denote by $F_i:=\operatorname{Im}(F \rightarrow \Omega_{S}|_{E_i})$ coming from the composition of the inclusion $F \rightarrow \Omega_{S}|_{E}$ and the first arrow of \eqref{restrictionell2}. Then $F_i \subset \Omega_{S}|_{E_i}$ is a locally free sheaf of rank $r_i$ on the smooth curve $E_i$. We have a commutative diagram of sheaves on  $S$:
\begin{equation}
\label{diagram4}
\xymatrix{
 & 0 \ar[d]  & 0 \ar[d] & 0 \ar[d] & \\
0 \ar[r] & K \ar[r]  \ar[d]  & F_1 \oplus F_2  \ar[r]  \ar[d]  & Q  \ar[r]   \ar[d]  & 0  \\
0 \ar[r] & \Omega_{S}|_{E}  \ar[r] &  \Omega_{S}|_{E_1} \oplus \Omega_{S}|_{E_2}  \ar[r] & \Omega_S|_{Z} \ar[r] &0 ,
}
\end{equation}
where the bottom row is \eqref{restrictionell2}, $Q:=\operatorname{Im}(F_1 \oplus F_2 \rightarrow \Omega_S|_{Z})$, and $K$ is the kernel of $F_1 \oplus F_2 \rightarrow Q$. The vertical arrows are injective by construction. No consider the commutative diagram:
\[
\xymatrix{
& 0 \ar[d]  & 0 \ar[d] &  \\
0 \ar[r] & F \ar[r] \ar[d] & F_1 \oplus F_2 \ar[d] &  &\\
0 \ar[r] & \Omega_{S}|_{E} \ar[r] & \Omega_{S}|_{E_1} \oplus \Omega_{S}|_{E_2} \ar[r]  & \Omega_S|_{Z} \ar[r] &0
}
\]
Here the injectivity of  the map $F \rightarrow F_1 \oplus F_2$ follows from the commutativity (and the injectivity of the other maps). From the diagram  we have that the composition
\[
F \rightarrow F_1 \oplus F_2 \rightarrow
\Omega_{S}|_{E_1} \oplus \Omega_{S}|_{E_2}
\rightarrow \Omega_S|_{Z}
\]
is zero. Hence using \eqref{diagram4} also the composition $F\rightarrow F_1 \oplus F_2 \rightarrow Q$ is zero. Hence $F \hookrightarrow K$. Now notice that $K/F$ is supported in dimension  zero since on the complement $U$ of $Z$ one easily sees that:
\[
F|_U \simeq (F_1\oplus F_2)|_U \simeq K|_U.
\]
Hence $\chi(K/F) \geq 0$ and $\chi(F) \leq \chi(K)$. Then in order to  bound  $\chi(F)$, we can focus on $\chi(K)$. Using that   $E_i$ are smooth elliptic curves, we get:
\begin{equation}
\label{inequalityquasifinal}
\chi(F) \leq \chi(K)= \chi(F_1) + \chi(F_2) -\chi(Q)=\operatorname{deg}(F_1)+deg(F_2)-l(Q),
\end{equation}
where $l(Q)$ denotes the length of the zero-dimensional sheaf $Q$. For $i=1,2$, let
\[
A_i:=\operatorname{Im}(F_i\rightarrow\Omega_S|_{Z}),
\]
and let $K_i$ be the kernel of the map $F_i\rightarrow A_i$. Then we have a commutative diagram on $E_i$:
\begin{equation}
\label{diagram5}
\xymatrix{
0 \ar[r] & K_i \ar[r] \ar[d] & F_i \ar[r] \ar[d] & A_i \ar[d] \ar[r]  & 0\\
0 \ar[r] & \Omega_{S}|_{E_i}(-Z)\ar[r] & \Omega_{S}|_{E_i}  \ar[r]  & \Omega_S|_{Z} \ar[r] &0 .
}
\end{equation}
Since $\Omega_{S}|_{E_i}$ is semistable, $\Omega_{S}|_{E_i}(-Z)$ is also semistable. Its ($\mu$-) slope is $\frac{-4}{2}=-2$. Thus $\operatorname{deg}(K_i)\leq -2r_i$. By exactness of the first row of \eqref{diagram5}, we get:
\[
\operatorname{deg}(F_i)=\operatorname{deg}(K_i)+l(A_i)
\leq -2r_i+l(A_i).
\]
 Then \eqref{inequalityquasifinal} becomes:
\begin{equation}
\label{finalinequality2}
\chi(F) \leq \chi(K)=\operatorname{deg}(F_1)+deg(F_2)-l(Q)\leq -2r_1-2r_2+l(A_1)+l(A_2)-l(Q).
\end{equation}
Now we observe that $l(A_i) \leq l(F_i|_{Z})=2r_i$. Indeed this follows immediately  from the surjection $F_i|_{Z} \twoheadrightarrow A_i$ coming from the factorization $F_i \rightarrow F_i|_{Z} \rightarrow\Omega_{S}|_Z$.  Moreover, by construction, $A_i \subset Q$, hence $l(A_i)-l(Q) \leq 0$. Then \eqref{finalinequality2} becomes:
\[
\chi(F) \leq  -2r_1-2r_2+l(A_1)+l(A_2)-l(Q) \leq -2r_1-2r_2+l(A_i) \leq -2r_1 -2r_2 + 2r_i
\]
for $i=1,2$. This gives $\chi(F) \leq -2r_i$ for $i=1,2$ and thus implies  $\chi(F) \leq -r_1-r_2$ as desired.
\end{proof}

\begin{cor}
\label{cortacnodesemistability}
Let $E_i \subset S$, $i=1,2$ be two smooth elliptic curves on $S$ (very general Enriques surface) tangent at a point with multiplicity $2$. Set $H:=E_1+E_2$. Then  the restriction $\Omega_{S}|_{C}$ to a general element in $|H|$ is $\mu$-semistable.
\end{cor}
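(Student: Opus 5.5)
The plan is to deduce the corollary from Lemma~\ref{ordinarytacnode} by openness of semistability in the flat family $|H|$, after translating Gieseker semistability on the special fibre into $\mu$-semistability on the general fibre.

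\emph{Special member.} First I check that $E:=E_1\cup E_2$ is a member of $|H|$ to which Lemma~\ref{ordinarytacnode} applies. Since $E_1\cdot E_2=2$ and the curves are tangent with multiplicity $2$ at one point, the scheme $Z=E_1\cap E_2$ has length $2$ and is supported at a single point $p$, as the lemma requires. We have $H^2=4$, and since $S$ is unnodal $\phi(H)=2$. By Theorem~\ref{etoegkveraglob} the linear system $|H|$ is base point free, and by Lemma~\ref{remarksmoothcurve} its general member $C$ is a smooth irreducible curve of genus $3$.

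\emph{Choice of polarization.} I need an ample $A$ with $A\cdot E_1=A\cdot E_2$. On an unnodal Enriques surface every big and nef class is ample, so $A:=H=E_1+E_2$ itself works, with $A\cdot E_i=2$. Lemma~\ref{ordinarytacnode} then gives that $\Omega_S|_E$ is Gieseker $A$-semistable.

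\emph{Deformation argument.} Consider the flat family $\mathcal C\to|H|$ together with the sheaf $\Omega_S|_{\mathcal C}$. Each fibre of this sheaf is pure of dimension one: on the smooth fibres this is clear, and on $E$ it was shown in the proof of Lemma~\ref{ordinarytacnode}. Gieseker semistability with respect to the fixed polarization $A|_{\mathcal C}$ is an open condition in flat families of pure sheaves (Maruyama; see \cite[Proposition 2.3.1]{HL}). Hence for a general smooth $C\in|H|$, the restriction $\Omega_S|_C$ is Gieseker semistable for $A|_C$.

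\emph{Back to $\mu$-semistability.} On a smooth irreducible curve, Gieseker semistability of a vector bundle with respect to any ample divisor is equivalent to $\mu$-semistability. Indeed, by Riemann--Roch $\chi(G)/\alpha_1(G)=(\mu(G)+1-g)/\deg A|_C$, and since $\alpha_1$ is rank times $\deg A|_C$, comparing the reduced Hilbert polynomials of a subsheaf and of $G$ is the same as comparing their slopes. This concludes.

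The main obstacle I expect is justifying openness cleanly across the reducible fibre. Specifically, I need that the polarization on the family is fixed, that the relative Quot-scheme (boundedness) argument applies to $\Omega_S|_{\mathcal C}$, and that $\Omega_S|_E$ is pure. All three look fine: the polarization is the restriction of $A$ from $S$, and purity was already verified in the lemma.
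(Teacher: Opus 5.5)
Your proposal is correct and follows essentially the same route as the paper: apply Lemma~\ref{ordinarytacnode} to the tacnodal member $E_1\cup E_2\in|H|$ with the polarization $H$ itself (ample, with $H\cdot E_1=H\cdot E_2=2$), use openness of Gieseker semistability in the family $|H|$, and pass to $\mu$-semistability on the general smooth member. Your extra details are consistent with the paper's sketch: flatness and purity of $\Omega_S$ restricted to the universal curve, the openness reference in \cite{HL}, and the Riemann--Roch slope comparison, which the paper handles by citing \cite[Lemma 1.2.13]{HL}.
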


\begin{proof}
First observe that $H$ is an ample line and globally generated line bundle (for example by Theorem \ref{etoegkveraglob}). Moreover $H \cdot E_1=H \cdot E_2=2$. Set $E:=E_1 \cup E_2$. Applying  Lemma \ref{ordinarytacnode} with this choice of $H$, we find that $\Omega_{S}|_{E}$ is Gieseker semistable. By the openness of semistability, we deduce that $\Omega_{S}|_{C}$  is Gieseker semistable for a general smooth irreducible curve $ C \in |H|$. Using \cite[Lemma 1.2.13]{HL} we conclude that $\Omega_{S}|_{C}$ is $\mu$-semistable for general $C \in |H|$. 
\end{proof}

Before coming to the proof of the main Theorem of the section, we  need to cover some other special configuration:

\begin{lemma}
\label{lemma2}
Let $S$ be a very general  Enriques surface. Let $H$ be an effective line bundle with $H \sim E_1 + E_2$, where $E_i$ are smooth elliptic curves, and $E_1 \cdot E_2=1$. Then $\Omega_{S}|_C$ is semistable for a general smooth irreducible curve  $C \in |H|$.
\end{lemma}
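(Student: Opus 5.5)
The plan is to degenerate a general member of $|H|$ to the reducible curve $E_1\cup E_2$ and then use openness of semistability. Since $E_1\cdot E_2=1$, the two smooth elliptic curves meet in a single point $p$, and the local intersection multiplicity there is $1$. Hence the intersection is transverse and $E:=E_1\cup E_2$ is a connected nodal curve with one node, whose components are smooth. By Lemma~\ref{ellipticfibration}, $\Omega_S|_{E_i}$ is semistable for $i=1,2$. Corollary~\ref{lemmablowupelliptic0} (equivalently Lemma~\ref{adjustednodalcurves} with $n=1$) then gives that $\Omega_S|_E$ is semistable in the adjusted-slope sense of \cite{CLV}.

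Next I need a family of curves in $|H|$ specializing to $E$ whose general fibre is smooth and irreducible. Here $H^2=2$, and since $H\cdot E_1=H\cdot E_2=1$, we get $\phi(H)=1$. So $|H|$ has base points, and this case falls under the last paragraph of Lemma~\ref{remarksmoothcurve}: the base points are simple and a general $C\in|H|$ is smooth and irreducible.

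I would then take a general pencil in $|H|$ spanned by $E$ and a general $C$. The members near $E$ are connected curves in $|H|$ of arithmetic genus $2$. Such a member is either smooth or has only nodes, because $E$ itself is nodal and nodality is open in the family. Together with the generic smoothness this gives a family of connected nodal curves whose special fibre is $E$. By Remark~\ref{opensem}, semistability in the \cite{CLV} sense is open in such families, so $\Omega_S|_C$ is semistable for $C$ near $E$. On a smooth irreducible curve the adjusted slope is the usual slope, so $\Omega_S|_C$ is $\mu$-semistable for the general $C\in|H|$.

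The main obstacle is the base point. The total space of the pencil must be handled so that the openness result applies, and one must check that the base point does not create extra singularities in the nearby fibres. I would argue as follows. Since the base scheme is reduced (\cite{CDL}, Propositions 2.6.1 and 2.6.4), we have $H^2=2$ with one simple base point $b$ of $|H|$ itself and a further base point of the pencil. A general pencil through $E$ has a reduced base scheme of length $2$. Blowing up these two points gives a flat family of curves over $\mathbb{P}^1$ whose fibres are isomorphic to the members of the pencil. The family therefore consists of connected nodal curves near $E$, and \cite[Proposition 2.3]{CLV} applies to it. Alternatively, openness of Gieseker semistability in flat families can be used, as in Corollary~\ref{cortacnodesemistability}.
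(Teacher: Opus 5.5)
Your proposal is correct and is essentially the paper's argument: $E_1\cup E_2$ is a connected curve with one node, $\Omega_S|_{E_1\cup E_2}$ is semistable by Corollary~\ref{lemmablowupelliptic0}, and openness of semistability (Remark~\ref{opensem}) then gives the claim for a general smooth $C\in|H|$. Your extra construction of the family is fine up to one slip: $h^0(H)=\chi(H)=2$, so $|H|$ is itself a pencil, spanned by $E_1+E_2$ and $E_1'+E_2'$, and it has exactly two simple base points, $E_1\cap E_2'$ and $E_1'\cap E_2$ (not one base point of $|H|$ plus a further one of a chosen sub-pencil); blowing up these two points gives the flat family of nodal curves you need.
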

\begin{proof}
In the final part of Lemma \ref{remarksmoothcurve} we have observed that a general element in $|H|$ is a smooth and irreducible curve. Set $E:=E_1 \cup E_2$.  By Corollary \ref{lemmablowupelliptic0}, $\Omega_{S}|_{E}$ is semistable.  By the opennes of semistability we conclude.
\end{proof}

\begin{lemma}
\label{lemma5}
Let $S$ be a very general Enriques surface,  and let $H$ be an effective line bundle with $H \sim E_1+E_2+E_3$, and $E_i \cdot E_j=1$ for $i \neq j$.  If $C \in |H|$ is a general smooth irreducible curve, then $\Omega_{S}|_C$ is semistable. 
\end{lemma}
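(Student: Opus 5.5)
Following the strategy of Lemma \ref{lemma2}, I will degenerate a general member of $|H|$ to the reducible curve $E:=E_1\cup E_2\cup E_3$. Then I will use the adjusted slope of Coskun--Larson--Vogt together with openness of semistability (Remark \ref{opensem}).

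\textbf{Shape of $E$.} Since $E_i\cdot E_j=1$ for $i\neq j$, any two components meet in exactly one point. That point must be a transverse intersection, since intersection multiplicity $1$ forces transversality. There are two possibilities.
\begin{enumerate}
\item The three points $E_1\cap E_2$, $E_1\cap E_3$, $E_2\cap E_3$ are distinct. Then $E$ is a connected nodal curve whose components are smooth elliptic curves. By Lemma \ref{ellipticfibration} each $\Omega_S|_{E_i}$ is semistable, so Lemma \ref{adjustednodalcurvesmultiples} shows that $\Omega_S|_E$ is semistable.
\item All three curves pass through a common point. In that case I replace $E_3$ by another member of the pencil it moves in, as below.
\end{enumerate}

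\textbf{Arranging distinct intersection points.} The curve $E_3$ is a half-fibre, and $E_3'\sim E_3+K_S$ is also a smooth elliptic curve (Remark \ref{remarksinguell}). However, $E_3'$ is not linearly equivalent to $E_3$, so I would not substitute it naively. Instead I would use the decomposition $H\sim E_1+E_2+E_3$ with the freedom to move $E_3$ inside $|2E_3|$, which is not allowed directly. The cleaner route is to pair $E_1+E_2$. By Lemma \ref{lemma2} and Lemma \ref{remarksmoothcurve}, a general $D\in|E_1+E_2|$ is a smooth irreducible curve of genus $2$ with $\Omega_S|_D$ semistable. Then $D\cdot E_3=2$, and $|E_1+E_2|$ is a base point free pencil... but is it? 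Here $\phi(E_1+E_2)=1$, so $|E_1+E_2|$ has simple base points, and $h^0(E_1+E_2)=2$ by Riemann--Roch. So I consider the curve $D\cup E_3$.

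\textbf{Transversality.} I need $D$ and $E_3$ to meet transversally in two points. The restriction of the pencil $|E_1+E_2|$ to $E_3$ is a degree-$2$ linear series on $E_3$. Its general member is two distinct points, unless the series is a single fixed divisor. That degenerate case would force $E_3$ to pass through both base points of the pencil. There is one base point $E_1\cap E_2$, possibly plus $K_S$-twists; if $E_3$ passes through it we are back in case (2). If the restricted series moves, its general divisor is reduced by Bertini on the curve, and for general $D$ the intersection $D\cap E_3$ is two distinct points with transverse intersection. Then Lemma \ref{adjustednodalcurves} applied to $D$ and $E_3$ gives semistability of $\Omega_S|_{D\cup E_3}$.

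\textbf{Conclusion and main obstacle.} In either case there is a connected nodal curve in $|H|$ on which $\Omega_S$ is semistable in the adjusted sense. Openness (Remark \ref{opensem}) then gives semistability on the general smooth member, which exists by Lemma \ref{remarksmoothcurve}. The main obstacle is the genericity step. I must rule out that every $D$ is tangent to $E_3$, or that all three $E_i$ are concurrent and remain so. If needed, I would handle a remaining tangency using Lemma \ref{ordinarytacnode}-style Gieseker arguments.
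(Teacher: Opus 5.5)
Your second route, a general $D\in|E_1+E_2|$ together with $E_3$ followed by Lemma~\ref{adjustednodalcurves} and openness, is exactly the paper's. However, the one step with real content is never established: that a general $D$ meets $E_3$ transversally in two distinct points. The argument you sketch for it is circular.

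\textbf{What goes wrong.} First, $E_1\cap E_2$ is \emph{not} a base point of $|E_1+E_2|$. Since $E_1'+E_2'\sim E_1+E_2+2K_S\sim E_1+E_2$, the pencil is spanned by $E_1+E_2$ and $E_1'+E_2'$. Because $E_i\cap E_i'=\emptyset$, its base points are $p=E_1\cap E_2'$ and $q=E_1'\cap E_2$. Second, your fallback ``we are back in case (2)'' leads nowhere. Case (2) was to be handled by moving $E_3$ in a pencil, which is impossible since $h^0(E_3)=1$, as you yourself observe. So the concurrent configuration is never handled, and your closing paragraph explicitly leaves the tangency question open. Lemma~\ref{ordinarytacnode} would not help there: it treats two elliptic curves of equal $H$-degree, not a genus-$2$ curve tangent to an elliptic one.

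\textbf{The missing idea.} It is short, and you already quote the key fact without using it: $p$ and $q$ are \emph{simple} base points. This is the paper's argument.
\begin{itemize}
\item Suppose a general $D$ were tangent to $E_3$ at $p$ (or $q$). Tangency at a fixed point is a closed condition, so every $D$ would be tangent there. Then the base scheme would contain a length-two subscheme at that point, contradicting simplicity.
\item Away from the base points, Bertini applied to the trace of the pencil on $E_3$ gives a reduced intersection.
\item Even if the trace were a fixed divisor, it would lie in the reduced base scheme and hence equal $p+q$.
\end{itemize}

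\textbf{An alternative that avoids base-point analysis.} We have $(E_1+E_2-E_3)^2=-2$. On an unnodal surface every effective divisor is nef, so $h^0(E_1+E_2-E_3)=0$. Hence the restriction $H^0(S,E_1+E_2)\to H^0(E_3,\mathcal{O}_{E_3}(E_1+E_2))$ is injective between $2$-dimensional spaces, i.e.\ an isomorphism. The trace on $E_3$ is therefore the complete, base-point-free $g^1_2$ of a degree-$2$ line bundle on an elliptic curve. Its general divisor is two distinct points, and in particular $E_3$ avoids $p$ and $q$ altogether.

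With either argument your case distinction (1)/(2) becomes unnecessary. Case (1) is a valid shortcut when it occurs, but it cannot stand alone.
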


\begin{proof}
One immedately sees that  $\phi(H) \geq 2$. Then a  general element in $|H|$ is a smooth irreducible curve by Bertini and Theorem  \ref{etoegkveraglob}. Denote by $p$ and $q$ the two (simple) base points of $|E_1+ E_2|$.  We already observed that a general $ C' \in |E_1+ E_2|$ is a  smooth and irreducible curve (see the final part of  Lemma \ref{remarksmoothcurve}). Observe that $(E_1+ E_2) \cdot E_3=2$.  Since $p$ ($q$) is a simple base point for the base locus, a general $C' \in |E_1+ E_2|$ cannot be tangent to $E_3$ at $p$ ($q$). Then a general $C' \in  |E_1+ E_2|$ meets $E_3$ in two distinct points. Since $\Omega_{S}|_{C'}$ is semistable by Lemma \ref{lemma2}, we conclude the proof  using Lemma \ref{ellipticfibration} and Lemma  \ref{adjustednodalcurves}. 

\end{proof}

\noindent\textbf{Proof of Theorem \ref{maintheorem1}}

\begin{proof}
Let $H$ be an effective line bundle on a very general Enriques surface. By Lemma \ref{remarksmoothcurve},

\begin{equation} \label{decompositionprooftheorem}
H \sim a_1E_1+\cdots+a_kE_k+\epsilon E'_1,
\end{equation}
where each  $E_i$ ($E_1'$) is a smooth irreducible elliptic curve. Moreover the general element of $|H|$ is a smooth irreducible curve, except in the
 cases $H \sim mE_1$ with $m \geq 3$, or 
$H \sim mE_1+E_1' $ with $m \geq 1$, in  which a general element is a disjoint union of smooth
irreducible elliptic curves. 

The proof is by induction on $n:=a_1+...+a_k+\epsilon$, the number of half-fibres, counted with multiplicity, appearing in the decomposition \eqref{decompositionprooftheorem}.  For each possible value of  $n$ and each decomposition type, we construct a nodal curve $C\in |H|$ (or, in one case, when $n=2$, a curve with a tacnode) such that $\Omega_S|_C$ is semistable. This will imply that the restriction of $\Omega_S$ to a general member of $|H|$ is semistable.

If $H$ is an effective line  admitting a decomposition of length $n=1$, then $H \sim E$, where $E$ is an half pencil and hence a smooth elliptic irreducible curve (in our very general setting, see Remark \ref{remarksinguell}). Then we conclude by Lemma  \ref{ellipticfibration}.

 Now suppose that $n=2$. Then either $ H \sim 2E_1$, $H \sim E_1+E_2$ with $E_1 \cdot E_2=1$, or $H \sim E_1+E_2$ with $E_1 \cdot E_2=2$, or $H \sim E_1+E_1'$, where $ E_1'$ is (as usual) the other half fiber associated with the elliptic fibration corresponding to $|2E_1|$. 
 
 In the first case, a general element $ C \in |H|$ is a smooth irreducible elliptic curve and again we conclude by Lemma \ref{ellipticfibration}. 
 
 If  $H \sim E_1+E_2$ with $E_1 \cdot E_2=1$ then we conclude by Corollary \ref{lemmablowupelliptic0}.
 
 If $H \sim E_1+E_2$ with $E_1 \cdot E_2=2$, we conclude by Corollary  \ref{lemmablowupelliptic0},  or  Corollary \ref{cortacnodesemistability}.

 If $H \sim E_1+E_1'$, then as in the proof  Lemma \ref{remarksmoothcurve},  a general element is the disjoint union of smooth irreducible elliptic curves.  Hence  we conclude again by  Lemma \ref{ellipticfibration}.

 Now consider the case $n=3$. First suppose that  $H \sim H_1 +E$, where $H_1$ is globally generated (with $n=2$), $H_1 \cdot E >0$, and $E$ is a smooth irreducible elliptic curve.   Then we easily conclude. Indeed, by Bertini a general element  $C_1 \in |H_1|$ is a smooth irreducible curve which intersect transversely $E$. By the length $2$ previous cases $\Omega_S|_{C_1}$ is semistable, and hence $\Omega_S|_{C_1 \cup E}$ is semistable by Lemma \ref{adjustednodalcurves}. Now we consider the remaining case. These correspond to  decomposition of types: 
 $ H \sim 3E_1$, $H \sim 2E_1 +E_1'$, $H \sim E_1+E_2 +E_3$ with $E_1 \cdot E_2 = E_1\cdot E_3=E_2 \cdot E_3=1$. If  $ H \sim 3E_1$, or $H \sim 2E_1 +E_1'$ then (as shown in Lemma  \ref{remarksmoothcurve}),  a general member is a disjoint union of smooth irreducible elliptic curves and we conclude by Lemma \ref{ellipticfibration}. If  $H \sim E_1+E_2 +E_3$ with $E_1 \cdot E_2 = E_1\cdot E_3=E_2 \cdot E_3=1$, we conclude by  Lemma \ref{lemma5}. 

Now  suppose that we know that for  every $H'$ admitting a decomposition of length $n-1 \geq 3$, the restriction  $\Omega_{S}|_{C'}$ to a general $C' \in |H'|$ is semistable.  Let $H$ be an effective line bundle admitting a decomposition of length $n \geq 4$.  Write
\[
H \sim H' + E.
\]
Here $H'$ is an effective line bundle admitting a decomposition of length  $=n-1 \geq 3$, and $E$ a smooth elliptic curve.  There are different cases to consider. 
\begin{itemize}
    \item $H'$ is globally generated, $H'^2 \neq 0$, and $H' \cdot E >0$. Then a  general $ C' \in |H'|$ is a smooth irreducible curve which meets $E$ in $ H' \cdot E$ distinct points. Then we conclude using the inductive hypothesis, Lemma  \ref{adjustednodalcurves} and Lemma \ref{ellipticfibration}.

    \item $H'$ is globally generated, $H'^2=0$, and $H' \cdot E >0$. It is immediate to see that in this case $H' \sim 2m'E'$, where $E'$ is an half pencil. A general element of $|2m'E'|$ is the disjoint union of $m'$  elliptic curves. Moreover we can suppose that each of them meets $E$ transversely in $E \cdot 2E'$ points. Hence  we can find a nodal curve $C \in |H|$ given by the union of $E$ together with these $m'$ elliptic curves. Then we  conclude by applying Lemma \ref{ellipticfibration}, and  Lemma \ref{adjustednodalcurvesmultiples}.

    \item    $H' \cdot E =0$. Using (for example) \eqref{decompositionprooftheorem} this forces $H' \sim a'E + b'E'$, where as usual $E'$ denotes the other half pencil of the elliptic fibration $|2E|$, and $b' \in \{0,1\}$. Hence  $H \sim aE + bE'$ for some $a,b$, $a+b=n$, $b \in \{0,1\}$. As already shown in Lemma \ref{remarksmoothcurve}, the general element is the union of smooth irreducible curves. Hence we conclude.
    
    \item $H'$ is not globally generated, $H' \cdot E > 0$,  $H'^2=0$. Then $H' \sim a'E_1 + b'E_1'$ with $a'+b'=n-1$, $a' \geq n-2 \geq  2,$ $b' \in \{0,1\}$.  Set $E_2:=E$. Then
    \[
    H \sim H' +E_2 \sim a'E_1 + b'E_1' +E_2, 
    \]
    $E_1 \cdot E_2=E_1'\cdot E_2 >0$. After possibly using $E_1' \sim E_1+K_S$, and $E_2' \sim E_2+K_S$,  this can be rewritten as:
    \[
    H \sim aE_1 + E_2 
    \]
    where $a=n-1 \geq 3$, $E_1 \cdot E_2 >0$. Now there are two possibilities, either $E_1 \cdot E_2=1$,  $E_1 \cdot E_2=2$. Consider the first case. Since $a\geq3$, a general member of $|aE_1|$ is a disjoint union of smooth irreducible elliptic curves by Lemma~\ref{remarksmoothcurve}. Moreover, as above, we can choose a connected nodal curve $C\in|H|=|aE_1+E_2|$ consisting of $E_2$ together with these elliptic curves, such that $E_2$ meets each of the other components transversely at distinct points. Then we conclude that the restriction of $\Omega_{S}|_C$ is semistable by  applying Lemma \ref{ellipticfibration}  and Lemma \ref{adjustednodalcurvesmultiples}. 
    
    The other possibility is $E_1 \cdot E_2=2$. Write $H \sim (a-1)E_1 +E_2 +E_1$. Since $(a-1)E_1 +E_2$ admits a decomposition of length $n-1$, the inductive hypothesis gives that the restriction to a general element in $|(a-1)E_1 +E_2|$ is semistable. Notice that  $(a-1)E_1 +E_2$ is globally generated (since $E_1 \cdot E_2 =2$ implies $ \phi((a-1)E_1 +E_2) \geq 2$).  Then a general element $C' \in |(a-1)E_1 +E_2|$ is a smooth irreducible curve meeting $E_1$ in $2$ distinct points and such that $\Omega_{S}|_{C'}$ is semistable. Therefore, as above, $C:=C'\cup E_1\in|H|$
is a nodal curve such that $\Omega_S|_{C}$ is semistable. 
   
 \item $H'$ is not globally generated, $H'\cdot E>0$, and $H'^2>0$. Then $\phi(H')=1$, and 
\[
H'\sim aE_1+E_2,
\qquad
E_1\cdot E_2=1.
\]
(see as usual the  final part of the proof of Lemma  \ref{remarksmoothcurve}). If $E\equiv E_1$, after possibly replacing $E_2$ by $E_2' \sim E_2+K_S$, we can rewrite $H$ as
\[
H\sim(a+1)E_1+\tilde E_2,
\]
and the conclusion follows from the previous case. If $E\not\equiv E_1$, and $ a \geq 3$, set $D:=(a-1)E_1+E_2+E$. Then  $\phi(D) \geq 2$, and  hence it is globally generated. Then the conclusion  follows from the preceding case applied to $H\sim D+E_1$. If $a=2$ and $E\equiv E_2$, we can write $H \sim 2E_1 +2E_2$, or $2E_1 +E_2+E_2'$. In the first case we choose a general $C_1' \in |2E_1|$, and  a general $C_2' \in |2E_2|$ such that they intersect transversely. In the second case we choose a general $C' \in |2E_1|$ which meets both $E_2$ and $E_2'$ transversely. Hence, in both cases one can construct a nodal connected curve whose irreducible components are smooth elliptic curves, and we conclude as above.   Finally, if  $a=2$ and $E \not \equiv E_1$, $E \not \equiv E_2$,  write $H \sim E_1 + E_2 + E+E_1$. Then $\phi(E_1+E_2+E) \geq 2$, and a general element in $|E_1+E_2+E|$ is a smooth irreducible curve which intersects $E_1$ transversely. We then conclude  using the length $n=3$ case and the usual argument.
\end{itemize}
\end{proof}

\begin{remark}
\label{remarkcomparison2}
As alrady noted in the introduction, \cite[Theorem 2.8]{H1} establishes the semistability of $\Omega_{S}|_C$ for a  general $C \in |mH|$, provided that $H$ is a \emph{very ample} line bundle, and $m^2H^2 \geq 48$. For $m=1$, this yelds $H^2 \geq 48$. Hence Theorem    \ref{maintheorem1} covers the remaining range at least when $S$ is a very general Enriques surface (notice that we have no assumption the positivity of $H$, however).  The analogous result for \emph{arbitrary} Enriques surfaces remains  open when $H^2 < 48$.  At a first attempt to this problem, the author tried to adapt the proof of \cite[Theorem 2.1]{DH} in the case of K3 surfaces,  which estabilishes the semistability for general $C \in |H|$  whenever $H$ is globally generated and ample.  However, given the different numerical estimates  involved in the case of Enriques surfaces, one can see that the argument does not extend directly to this setting.
\end{remark}
This  naturally leads to the following question:
\begin{question}
Is it true that, for every Enriques surface $S$ and every ample and globally generated line bundle $H$ on $S$, the restriction $\Omega_S|_C$ is semistable for a general curve $C\in |H|$?
\end{question}

\section{Some remarks about the posivitiy of $\Omega_S$}
\label{section4}
\
In the previous sections we have shown that for a polarized Enriques surface $S$ of degree at least $6$, the restriction $\Omega_{S}|_C$ is semistable for a general curve $C \in |H|$, and the same is true when the degree is $4$, or $2$, and $S$ is very general. On the other hand, it is expected that there exist positive dimensional families  of smooth  curves  $\{C_t\}$ covering $S$ such that $\Omega_{S}|_{C_t}$ is not semistable. As we now explain, this is a consequence of the fact that $\Omega_S$ is not pseudo-effective. 

Let $X$ be a smooth projective variety. By definition, a vector bundle $E$ on $X$ is pseudo-effective if $\mathcal{O}_{\mathbb{P}(E)(1)}$ is pseudo-effective on $\mathbb{P}(E)$.  As usual we denote by $\widebar{\operatorname{Eff}}(X) \subset N^1(X)_{\mathbb{R}}$ the cone of pseudo-effective divisors.  Now we recall the definition of movable curves. We follow \cite{Lazarsfeld}. Denote by $N_1(X)_{\mathbb{R}}$ the real vector space of numerical one cycles. 

\begin{defin}
Let $X$ be an irreducible projective variety of dimension $n$. A class $\gamma \in  N_1(X)_{\mathbb{R}}$ is movable  if there exists a projective birational mapping $\mu: X' \rightarrow X$, together with ample classes $a_1,...,a_{n-1} \in N^1(X')_{\mathbb{R}}$ such that  $\gamma= \mu_{*}(a_1 \cdot...\cdot a_{n-1})$.  The movable cone  $\widebar{Mov}_1(X) \subset N_1(X)_{\mathbb{R}}$   of $X$ is the closed convex cone spanned by all movable classes.
\end{defin}
The link between movable classes and pseudo-effective divisors is given by the following deep theorem  proved in \cite{BDPP}. We state it in the form presented in \cite[Theorem 11.4.19]{Lazarsfeld}.
\begin{teo}
\label{teobp}
Let $X$ be an irreducible projective variety of dimension $n$. Then the cones 
\[
\widebar{\operatorname{Mov}}_1(X) \ \text{and} \ \widebar{\operatorname{Eff}}(X)
\]
are dual.
\end{teo}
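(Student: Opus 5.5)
The plan is to prove the two inclusions between $\widebar{\operatorname{Mov}}_1(X)$ and the dual cone $\widebar{\operatorname{Eff}}(X)^{\vee}$ separately. By biduality of closed convex cones, the statement is equivalent to: (i) $D\cdot\gamma\geq 0$ for every pseudo-effective $D$ and every movable $\gamma$; and (ii) conversely, if $D\cdot\gamma\ge 0$ for all movable $\gamma$, then $D$ is pseudo-effective. Throughout we may first reduce to the case where $X$ is smooth, by passing to a resolution, since movable classes are defined through birational models anyway and pseudo-effectivity is tested by pullback.

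\textbf{Step (i), the easy inclusion.} Let $\gamma=\mu_*(a_1\cdots a_{n-1})$ with the $a_i$ ample on $X'$, and let $D$ be an effective divisor on $X$. By the projection formula $D\cdot\gamma=\mu^*D\cdot a_1\cdots a_{n-1}$. Since $\mu^*D$ is effective, we may take multiples of the $a_i$ that are very ample and intersect general members. This represents the product by a complete intersection curve not contained in $\operatorname{Supp}\mu^*D$, so the intersection number is $\geq 0$. Passing to limits (closures, positive combinations, and $\mathbb{R}$-classes) gives (i).

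\textbf{Step (ii), the hard inclusion.} We argue by contradiction. Let $D$ be not pseudo-effective, and fix a big (e.g.\ ample) class $B$. Let $s>0$ be maximal such that $L:=B+sD$ is pseudo-effective. Then $L$ lies on the boundary of $\widebar{\operatorname{Eff}}(X)$, hence $\operatorname{vol}(L)=0$, and $L_\varepsilon:=L+\varepsilon B$ is big for $\varepsilon>0$ with $\operatorname{vol}(L_\varepsilon)\to 0$. For each $L_\varepsilon$ we take a Fujita approximation $\mu_\varepsilon:X_\varepsilon\to X$ with
\[
\mu_\varepsilon^*L_\varepsilon=A_\varepsilon+E_\varepsilon,
\]
where $A_\varepsilon$ is ample, $E_\varepsilon$ is effective, and $A_\varepsilon^n$ is arbitrarily close to $\operatorname{vol}(L_\varepsilon)$. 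We then put $\gamma_\varepsilon:=\mu_{\varepsilon*}(A_\varepsilon^{n-1})$, which is movable. The key estimate we aim for is the \emph{orthogonality estimate}
\[
\bigl(A_\varepsilon^{n-1}\cdot E_\varepsilon\bigr)^2\le C\,\bigl(\operatorname{vol}(L_\varepsilon)-A_\varepsilon^n\bigr)\,\operatorname{vol}(L_\varepsilon),
\]
with $C$ depending only on the dimension. We would prove it by comparing $\operatorname{vol}(A_\varepsilon+tE_\varepsilon)$ for small $t$ with the lower bound given by the holomorphic Morse type inequality $\operatorname{vol}(A-F)\ge A^n-nA^{n-1}\cdot F$ for nef $A,F$.

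Granting this estimate, we get $L_\varepsilon\cdot\gamma_\varepsilon=A_\varepsilon^n+A_\varepsilon^{n-1}\cdot E_\varepsilon$, which is small compared to $\operatorname{vol}(L_\varepsilon)^{(n-1)/n}$. On the other hand, $B\cdot\gamma_\varepsilon\geq c\,\operatorname{vol}(L_\varepsilon)^{(n-1)/n}$ for some $c>0$, by Teissier/Khovanskii type inequalities (bounding $B$ below by an ample class). We normalize $\gamma_\varepsilon$ and write $sD=L_\varepsilon-(1+\varepsilon)B$. For $\varepsilon\ll 1$ this gives $D\cdot\gamma_\varepsilon<0$, which contradicts the hypothesis of (ii). Hence $D$ is pseudo-effective, and the two cones are dual.

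\textbf{Main obstacle.} I expect the main obstacle to be the orthogonality estimate for Fujita approximations, together with the uniform control of the error terms as $\varepsilon\to 0$. The rest is formal convexity and intersection theory. I would first try to derive the estimate from the differentiability of the volume function, namely $\tfrac{d}{dt}\operatorname{vol}(L+tF)=n\langle L^{n-1}\rangle\cdot F$.
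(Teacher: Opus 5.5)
The paper gives no proof of this statement. It quotes it from BDPP in the form of Lazarsfeld's Theorem 11.4.19. Your outline is precisely that argument: the projection formula for the easy inclusion, then a class $L=B+sD$ on the boundary of the pseudo-effective cone, Fujita approximations of $L_\varepsilon=L+\varepsilon B$, and Khovanskii--Teissier for $B\cdot\gamma_\varepsilon$.

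\textbf{The gap: the key estimate is false as stated.} With $\operatorname{vol}(L_\varepsilon)$ as the second factor and $C$ depending only on $n$, the inequality fails, and it fails exactly in the regime of step (ii). There the positive part of $L_\varepsilon$ shrinks while its negative part does not. Here is a counterexample.
\begin{itemize}
\item Take $X=\mathrm{Bl}_p\mathbb{P}^2$, with $h$ the pullback of a line and $e$ the exceptional curve, and set $B=2h-e$, $D=2e-2h$.
\item Then $s=1$, $L=e$, $L_\varepsilon=2\varepsilon h+(1-\varepsilon)e$, and $\operatorname{vol}(L_\varepsilon)=4\varepsilon^2$ by Zariski decomposition.
\item For $0<\delta<2\varepsilon$, the class $A=2\varepsilon h-\delta e$ is ample and $E=(1-\varepsilon+\delta)e$ is effective.
\item One computes $\operatorname{vol}(L_\varepsilon)-A^2=\delta^2$ and $A\cdot E=\delta(1-\varepsilon+\delta)$.
\end{itemize}
Your inequality would then force $(1-\varepsilon+\delta)^2\le 4C\varepsilon^2$. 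This fails for small $\varepsilon$, however small the Fujita defect $\delta^2$ is. So the step you single out as the main obstacle cannot be proved in the form you state it.

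\textbf{What the Morse-inequality argument actually gives.} It yields the BDPP estimate with a fixed dominating class, not one controlled by the volume. To apply $\operatorname{vol}(\alpha-\beta)\ge\alpha^n-n\alpha^{n-1}\cdot\beta$ you must write $A+tE=(A+t\mu^*H)-t\bigl(A+\mu^*(H-L_\varepsilon)\bigr)$, with $H$ ample and $H-L_\varepsilon$ nef. The linear term is then $ntA^{n-1}\cdot E$, as you want. The $t^2$-terms, however, are mixed intersections of $A$, $\mu^*H$ and $\mu^*(H-L_\varepsilon)$. These are bounded by a multiple of $H^n$, not by $\operatorname{vol}(L_\varepsilon)$. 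The result is
$(A_\varepsilon^{n-1}\cdot E_\varepsilon)^2\le c_n\,H^n\,(\operatorname{vol}(L_\varepsilon)-A_\varepsilon^n)$.

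\textbf{Why this is enough.} A single ample $H$ dominates all $L_\varepsilon$ with $0<\varepsilon\le 1$; take $H$ with $H-L-B$ nef. For each $\varepsilon$ you are free to choose the Fujita approximation with defect $\operatorname{vol}(L_\varepsilon)-A_\varepsilon^n\le\varepsilon^2\operatorname{vol}(L_\varepsilon)^{2(n-1)/n}/(c_nH^n)$. This gives $A_\varepsilon^{n-1}\cdot E_\varepsilon\le\varepsilon\operatorname{vol}(L_\varepsilon)^{(n-1)/n}$. The rest of your step (ii) then goes through verbatim.

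So the uniformity in $\varepsilon$ comes from a fixed dominating ample class together with the freedom to choose the Fujita defect, not from $\operatorname{vol}(L_\varepsilon)$. Your alternative via differentiability of the volume, which gives $\langle L_\varepsilon^{n-1}\rangle\cdot L_\varepsilon=\operatorname{vol}(L_\varepsilon)$ directly, would also bypass the issue, but it relies on a much heavier input.
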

Recall that this means that 
\[
\widebar{\operatorname{Eff}}(X)=\{\delta \in N^1(X)_{\mathbb{R}}: \delta \cdot \gamma \geq 0 \ \text{for every} \ \gamma \in \widebar{Mov}_1(X)\}.
\]
As an immediate consequence of \cite{Nakayama} we have the following.
\begin{lemma}
\label{pseudoeffecctive}
Let $S$ be an Enriques surface. Then $\Omega_S$ is not pseudo-effective.
\end{lemma}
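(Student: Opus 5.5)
We argue by pulling back along the K3 cover $\pi: X \to S$ and invoking Nakayama's theorem that $\Omega_X$ is not pseudo-effective. Since $\pi$ is étale of degree $2$, we have $\pi^*\Omega_S \simeq \Omega_X$. This induces a finite étale morphism $\tilde\pi: \mathbb{P}(\Omega_X)\to \mathbb{P}(\Omega_S)$, also of degree $2$, with
\[
\tilde\pi^*\mathcal{O}_{\mathbb{P}(\Omega_S)}(1)\simeq \mathcal{O}_{\mathbb{P}(\Omega_X)}(1).
\]
The plan is to show that pseudo-effectivity of $\mathcal{O}_{\mathbb{P}(\Omega_S)}(1)$ would force pseudo-effectivity of $\mathcal{O}_{\mathbb{P}(\Omega_X)}(1)$, contradicting \cite{Nakayama}.

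\textbf{Step 1: pullback preserves pseudo-effectivity.} A finite surjective morphism pulls back effective $\mathbb{Q}$-divisors to effective $\mathbb{Q}$-divisors. The pullback map on $N^1$ is linear and continuous, so it sends the closure $\widebar{\operatorname{Eff}}(\mathbb{P}(\Omega_S))$ into $\widebar{\operatorname{Eff}}(\mathbb{P}(\Omega_X))$.

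Alternatively, one can use Theorem~\ref{teobp}. For a movable class $\gamma$ on $\mathbb{P}(\Omega_X)$, the push-forward $\tilde\pi_*\gamma$ is movable on $\mathbb{P}(\Omega_S)$. By the projection formula,
\[
\tilde\pi^*\delta\cdot\gamma=\delta\cdot\tilde\pi_*\gamma\geq 0 .
\]
The direct argument via effective divisors is cleaner, so we use that one.

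\textbf{Step 2: conclusion.} If $\Omega_S$ were pseudo-effective, Step 1 would make $\mathcal{O}_{\mathbb{P}(\Omega_X)}(1)$ pseudo-effective, i.e.\ $\Omega_X$ would be pseudo-effective. This contradicts \cite{Nakayama}.

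The only points needing care are the compatibility of the Grothendieck projectivization with étale pullback, giving $\mathbb{P}(\pi^*\Omega_S)=\mathbb{P}(\Omega_S)\times_S X$, and the identification of the tautological bundles. Both are standard, so no real obstacle is expected beyond quoting Nakayama's result correctly for K3 surfaces.
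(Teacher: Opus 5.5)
Your proposal is correct and follows the paper's argument: pass to the K3 cover, use the induced degree-$2$ étale map $\mathbb{P}(\Omega_X)\to\mathbb{P}(\Omega_S)$ pulling $\mathcal{O}_{\mathbb{P}(\Omega_S)}(1)$ back to $\mathcal{O}_{\mathbb{P}(\Omega_X)}(1)$, and conclude from Nakayama's theorem because pullback preserves pseudo-effectivity. The only difference is that you prove this last fact directly (effective divisors pull back to effective divisors, then take closures), whereas the paper cites \cite[Lemma 3.4]{HP} for it.
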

\begin{proof}
Let $f:S' \rightarrow S$ be the $K3$ cover. There is an induced \'etale morphism of degree $2$,
$f': \mathbb{P}(\Omega_S') \rightarrow  \mathbb{P}(\Omega_S)$, such that
$f'^{*}\mathcal{O}_{\mathbb{P}(\Omega_S)}(1)=\mathcal{O}_{\mathbb{P}(\Omega_{S'})}(1)$.
By (\cite{Nakayama}), $\Omega_{S'}$ is not pseudo-effective. Hence we conclude using that the
pull-back of pseudo-effective line bundles is pseudo-effective (see for example \cite[Lemma3.4]{HP}).
\end{proof}

Putting together Theorem~\ref{teobp} and Lemma~\ref{pseudoeffecctive}, we obtain that there exists
a positive-dimensional family of curves $\{C_t\}$ on $S$ such that the restriction
$\Omega_{S}|_{C_t}$ is not nef for general $t$. This is standard (see for instance \cite{BDPP}).
For the reader's convenience, we spell out an argument in the following corollary.
\begin{cor}
\label{moving}
Let $S$ be an Enriques surface. Then there exists a positive dimensional family of curves $\{C_t\}$ such that the generic element is irreducible, and $\Omega_{S}|_{C_t}$ is not nef. 
\end{cor}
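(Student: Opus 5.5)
Set $X:=\mathbb{P}(\Omega_S)$, $\pi: X\to S$, and $\xi:=\mathcal{O}_{\mathbb{P}(\Omega_S)}(1)$. The plan is to get a movable curve class with negative degree on $\xi$ from Lemma~\ref{pseudoeffecctive} and Theorem~\ref{teobp}, realize it by an honest covering family of curves in $X$, and push that family down to $S$.

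First, by Lemma~\ref{pseudoeffecctive} the class of $\xi$ does not lie in $\widebar{\operatorname{Eff}}(X)$. By the duality of Theorem~\ref{teobp} there is then a class $\gamma\in\widebar{\operatorname{Mov}}_1(X)$ with $\xi\cdot\gamma<0$. Movable classes span this closed cone, and the condition $\xi\cdot\gamma<0$ is open. So we may take $\gamma$ to be a positive combination of movable classes. Not all of these can be $\xi$-nonnegative, so at least one movable class $\gamma=\mu_*(a_1\cdot a_2)$ has $\xi\cdot\gamma<0$. Here $\mu:X'\to X$ is projective birational and $a_1,a_2$ are ample classes on the threefold $X'$.

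Second, we realize $\gamma$ by curves. Since ampleness is open and the inequality is strict, we may perturb $a_1,a_2$ to rational classes and rescale. After that we may assume $a_i=c_1(A_i)$ with $A_i$ very ample. By Bertini, complete intersections $D_1\cap D_2$ with general $D_i\in|A_i|$ give a positive dimensional family $\{\Gamma_t\}$ of smooth irreducible curves covering $X'$. Their images $\mu(\Gamma_t)$ form a covering family of irreducible curves in $X$ with $\xi\cdot\mu(\Gamma_t)<0$.

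Third, we push down to $S$. Since the family covers $X$, a general member $\Gamma_t$ is not contained in a fibre of $\pi$, so $C_t:=\pi(\Gamma_t)$ is an irreducible curve. The family $\{C_t\}$ dominates $S$, hence is positive dimensional. Let $\nu_t:\tilde\Gamma_t\to X$ be the normalization of the image curve and set $f_t:=\pi\circ\nu_t$, a finite map onto $C_t$. By the universal property of $\mathbb{P}(\Omega_S)$, $\nu_t$ corresponds to a quotient $f_t^*\Omega_S\twoheadrightarrow \nu_t^*\xi$ of negative degree. A bundle with a negative-degree quotient line bundle is not nef, so $f_t^*\Omega_S$ is not nef. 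Nefness is invariant under finite surjective pullback, so $\Omega_S$ restricted to (the normalization of) $C_t$ is not nef.

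The main obstacle is the second step: passing from the abstract cone statement to an actual family. This means the perturbation and rational-approximation argument that turns a limit of movable classes into a single complete-intersection class. Some care is also needed if $C_t$ is singular; then "nef" should be read via the normalization, as is standard.
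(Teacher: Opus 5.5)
Your argument follows the paper's proof: non-pseudo-effectivity of $\Omega_S$ and BDPP duality give a movable class $\mu_*(a_1\cdot a_2)$ that is negative on $\mathcal{O}_{\mathbb{P}(\Omega_S)}(1)$, you realize it by complete intersections of very ample divisors on $X'$, and the tautological quotient shows non-nefness after pushing down to $S$; you perturb to rational classes where the paper instead expands $a_1,a_2$ in integral ample classes and picks one negative pair, and your cone argument supplies the passage from $\widebar{\operatorname{Mov}}_1(X)$ to a single movable class that the paper glosses over. One justification needs fixing: ``the family covers $X$'' does not imply that a general member avoids the fibres of $\pi$, since the fibres themselves form a covering family; the correct reason is that $\xi$ restricts to $\mathcal{O}_{\mathbb{P}^1}(1)$ on every fibre, so no curve of negative $\xi$-degree can lie in one, and likewise $\mu(\Gamma_t)$ cannot be a point because $\xi\cdot\mu_*[\Gamma_t]<0$.
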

\begin{proof}
Since $\Omega_S$ is not pseudo-effective, the divisor class
$\mathcal{O}_{\mathbb{P}(\Omega_S)}(1)$ is not pseudo-effective on $\mathbb{P}(\Omega_S)$.
By Theorem \ref{teobp}, there exists a movable class
$\delta\in \widebar{\operatorname{Eff}}(\mathbb{P}(\Omega_S))$ such that
\[
\delta\cdot \mathcal{O}_{\mathbb{P}(\Omega_S)}(1)<0.
\]
By definition of movable class, there exist a projective birational morphism
$\mu:X'\to \mathbb{P}(\Omega_S)$, and ample classes $a_1=\sum a_{1,i} H_i,\ a_2=\sum a_{2,j} H'_j\in N^1(X)_{\mathbb{R}}$, $a_{1,i}$, $ a_{2,j} >0$,   such that $
\delta=\mu_*(a_1\cdot a_2) $.
Hence there exists $H_i$ and $H_j$ such that $\mu_{*}(H_iH_j) \cdot  \mathcal{O}_{\mathbb{P}(\Omega_S)}(1) <0 $. 
Up to replacing $H_i$ and $H_j$ with some multiples,  we can suppose that they are  very ample classes. Hence a general element in $|H_1 \cap H_2|$ is an integral curve $C''_t$  which moves in a positive-dimensional family. Set $C'_t:=\mu(C''_t)\subset \mathbb{P}(\Omega_S)$. Then
\[
\operatorname{deg}(\mathcal{O}_{\mathbb{P}(\Omega_S)}(1)|_{C'_t})<0.
\]
 Denote by $\pi:\mathbb{P}(\Omega_S) \rightarrow S$ the bundle morphism. Now consider the tautological quotient:
\[
\pi^*\Omega_{S}|_{C'_t} \rightarrow \mathcal{O}_{\mathbb{P}(\Omega_S)}(1)_{|C'_t}\rightarrow 0.
\]
This gives:
\[
C'_t \simeq \mathbb{P}(\mathcal{O}_{\mathbb{P}(\Omega_S)}(1)_{|C'_t}) \xhookrightarrow{} \mathbb{P}(\pi^*\Omega_S|_{C'_t})
\]
and $\operatorname{deg}(\mathcal{O}_{\mathbb{P}(\pi^*\Omega_{S}|_{C'_t})}(1)_{|C'_t}) <0$. Set $C_t:=\pi(C'_t)$. Then we conclude that $\Omega_{S}|_{C_t}$ is not nef since its pull-back by the morphism $\pi_{|C'_t}: C'_t \rightarrow C_t$  is not.
\end{proof}

As a consequence of Corollary \ref{moving}, one is then lead to guess the existence of positive dimensional families of smooth irreducible  curves on Enriques surfaces such that $\Omega_{S}|_{C_t}$ is not nef. Equivalently, by Hartshorne's Theorem (\cite[Theorem 2.4]{Ha}), it is equivalent to ask that $\Omega_{S}|_{C_t}$ is not semistable. The aim of the next  section is to produce some (non-trivial) explicit examples of positive dimensional families of  curves satisfying these properties.

\section{Some examples of smooth destabilizing curves}
\label{section5}

A trivial class of examples of destabilizing curves on Enriques surfaces, i.e. curves for which the restriction of $\Omega_S$ is not semistable, is given by smooth rational curves.
.
\begin{lemma}
Let $C \subset S$ be a smooth rational curve. Then $\Omega_{S}|_C$ is not semistable.
\end{lemma}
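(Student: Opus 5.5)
The plan is to use the conormal sequence of $C$ in $S$, just as in the proof of Lemma \ref{ellipticfibration}, and exhibit an explicit quotient line bundle of $\Omega_S|_C$ whose degree is smaller than the slope. Since $C\cong \mathbb{P}^1$ lies on a surface, we have the exact sequence
\[
0 \rightarrow \mathcal{O}_C(-C) \rightarrow \Omega_S|_C \rightarrow \omega_C \rightarrow 0 .
\]
By adjunction, $C^2 = 2g(C)-2 - K_S\cdot C = -2$, because $K_S$ is numerically trivial on an Enriques surface. So $\mathcal{O}_C(-C)\cong \mathcal{O}_{\mathbb{P}^1}(2)$ and $\omega_C\cong \mathcal{O}_{\mathbb{P}^1}(-2)$.

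Next we compute the slope. We have $\deg(\Omega_S|_C) = K_S\cdot C = 0$, so $\mu(\Omega_S|_C)=0$. The subbundle $\mathcal{O}_C(-C)\cong\mathcal{O}_{\mathbb{P}^1}(2)$ has slope $2>0$, which already contradicts semistability. Equivalently, the quotient $\omega_C$ has negative degree $-2$, so $\Omega_S|_C$ is not nef, in line with Hartshorne's criterion recalled above.

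We can say more. By Grothendieck the bundle splits as $\mathcal{O}(a)\oplus\mathcal{O}(-a)$. The extension class lies in $\operatorname{Ext}^1(\mathcal{O}(-2),\mathcal{O}(2)) = H^1(\mathcal{O}(4))=0$, so the sequence splits and $\Omega_S|_C\cong \mathcal{O}_{\mathbb{P}^1}(2)\oplus\mathcal{O}_{\mathbb{P}^1}(-2)$. This refinement is not needed for the statement.

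There is no real obstacle here. The only point to check is $K_S\cdot C=0$, which follows because $2K_S\sim 0$. Note also that the statement is only meaningful on a nodal Enriques surface, since a very general $S$ contains no smooth rational curves, as remarked in Remark \ref{remarksinguell}.
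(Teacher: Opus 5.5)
Your proof is correct and is essentially the paper's argument: the conormal sequence $0\to\mathcal{O}_C(-C)\to\Omega_S|_C\to\omega_C\to 0$, with $C^2=-2$ from adjunction and $K_S\equiv 0$, exhibits the subbundle $\mathcal{O}_C(-C)\cong\mathcal{O}_{\mathbb{P}^1}(2)$ of positive degree inside a bundle of degree $0$. The splitting $\Omega_S|_C\cong\mathcal{O}_{\mathbb{P}^1}(2)\oplus\mathcal{O}_{\mathbb{P}^1}(-2)$ is a correct but unnecessary refinement; you also write $\mathcal{O}_C(-C)$ correctly where the paper's displayed sequence has the typo $\mathcal{O}_S(-C)$.
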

\begin{proof}
It immediately follows from the conormal exact sequence:
\[
0 \rightarrow \mathcal{O}_S(-C) \rightarrow \Omega_{S}|_C \rightarrow \omega_C \rightarrow 0.
\]
\end{proof}
These, however, are rigid curves, and moreover the general Enriques surface does not contain any smooth rational curve. Another natural class of curves is represented by elliptic curves (recall that every Enriques surface admits an elliptic fibration). For elliptic curves, the restriction of the cotangent bundle is strictly semistable (that is, semistable but not stable), as we have seen in Lemma \ref{ellipticfibration}.

The construction of explicit (non-trivial) examples of positive dimensional families of curves destabilizing the cotangent bundle of (possibly general) Enriques surfaces seems a difficult task. Indeed recall that from Theorem \ref{maintheorem1} the restriction $\Omega_{S}|_C$ is semistable when $C \in |H|$, $H$ is ample and globally generated,  and $C$ is general. Moreover, Bogomolov's theorem(\cite[Theorem 7.3.5]{HL}),  applied to this situation,  gives that  $\Omega_{S}|_C$  is stable for every smooth curve in $|mH|$ when $m \geq 48$.

The analogous problem for $K3$ surfaces  is  studied in \cite{GO}. One of the main results of loc.cit. article is the following:
\begin{teo}{\cite[Theorem A]{GO}}
\label{thmgo}

Let $(S',\mathcal O_S(1))$ be a general polarised $K3$ surface of degree $d$.
\begin{enumerate}
\item[(i)] If $d=2$, then there is a positive-dimensional family of smooth curves
$C \in |\mathcal O_{S'}(6)|$ such that $\Omega^1_{S'|C}$ is not  semistable. 
\item[(ii)] If $d=4$ or $d=6$, there are  families as above in $|\mathcal O_{S'}(3)|$(and another in $|\mathcal{O}_{S'}(4)|$ if $d=4$).
\item[(iii)] If $d=8$, then there exists a positive-dimensional family of smooth curves
$C \in |\mathcal O_{S'}(3)|$ such that $\Omega^1_{S'|C}$ is strictly semistable.
\end{enumerate}
\end{teo}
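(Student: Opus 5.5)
The plan is to realise the destabilising curves as images of complete intersections of two divisors in $P:=\mathbb{P}(\Omega_{S'})$, with $\pi:P\to S'$ the projection. Let $\xi$ be the class of $\mathcal{O}_P(1)$ and $h:=\pi^*\mathcal{O}_{S'}(1)$. Since $c_1(\Omega_{S'})=0$ and $c_2(\Omega_{S'})=24$, the Grothendieck relation gives $\xi^2=-24f$, where $f$ is the class of a fibre of $\pi$. Hence
\[
\xi^3=-24,\qquad \xi^2h=0,\qquad \xi h^2=d .
\]
Take sections $s_i\in H^0(\Omega_{S'}(b_i))$ for $i=1,2$, and let $D_i\in|\xi+b_ih|$ be the corresponding divisors. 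Set $\Gamma:=D_1\cap D_2$. Then
\[
\xi\cdot\Gamma=-24+b_1b_2d,\qquad h\cdot\Gamma=(b_1+b_2)d .
\]
So $\pi_*\Gamma\in|\mathcal{O}_{S'}(b_1+b_2)|$, and the tautological quotient $\Omega_{S'}|_{\Gamma}\to\mathcal{O}_P(1)|_\Gamma$ has degree $-24+b_1b_2d$. The numerics then select the following choices:
\begin{itemize}
\item $d=2$: $b_1=b_2=3$, giving degree $-6$ in $|\mathcal{O}(6)|$;
\item $d=4,6$: $(b_1,b_2)=(1,2)$, giving degree $-16$, respectively $-12$, in $|\mathcal{O}(3)|$; for $d=4$ also $(b_1,b_2)=(1,3)$ or $(2,2)$, giving a family in $|\mathcal{O}(4)|$;
\item $d=8$: $(b_1,b_2)=(1,2)$, giving degree $0$, which is the strictly semistable case.
\end{itemize}

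Next I would identify the image curve concretely. On a fibre $\mathbb{P}^1$ of $\pi$ over $x$, the divisor $D_i$ is the zero of the linear form $s_i(x)$. So $D_i$ meets each fibre in exactly one point, except over the finite set $Z(s_i)$, where it contains the whole fibre. Thus $D_i$ is the blow-up of $S'$ at $Z(s_i)$. A point of $D_1\cap D_2$ over $x$ is a common zero, which exists exactly when $s_1(x)$ and $s_2(x)$ are proportional. Hence $\pi(\Gamma)=C:=\{s_1\wedge s_2=0\}$, where $s_1\wedge s_2\in H^0(\det\Omega_{S'}(b_1+b_2))=H^0(\mathcal{O}_{S'}(b_1+b_2))$. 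In other words, $C$ is the degeneracy locus of $\mathcal{O}(-b_1)\oplus\mathcal{O}(-b_2)\to\Omega_{S'}$. Away from $Z(s_1)\cap Z(s_2)$, which is empty for general choices, $\pi|_\Gamma:\Gamma\to C$ is bijective. It is an isomorphism as soon as $C$ is smooth, because $\Gamma$ is then the graph of the line subbundle of $\Omega_{S'}|_C$ spanned by $s_1$ (or by $s_2$). With this identification, $\Omega_{S'}|_C$, which has degree $0$, admits a quotient line bundle of degree $-24+b_1b_2d$. This quotient is negative in cases (i) and (ii), so $C$ is destabilising. In case (iii) it has degree $0$, so $C$ is strictly semistable. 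Varying $(s_1,s_2)$ gives a positive-dimensional family, provided the $C$'s are not all equal; this follows by a dimension count on $H^0(\Omega_{S'}(b_1))\times H^0(\Omega_{S'}(b_2))$ modulo the fibres of the map to $|\mathcal{O}(b_1+b_2)|$.

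The main obstacle is the input on the twisted cotangent bundle. One needs $H^0(\Omega_{S'}(b_i))$ large enough, ideally $\Omega_{S'}(b_i)$ globally generated, so that a Bertini-type theorem for degeneracy loci (Ottaviani/Kleiman style) makes $C$ smooth and irreducible. Smoothness could fail only along codimension-$3$ data, which is empty on a surface. For $b\ge 2$ I would get this from Castelnuovo–Mumford regularity, using the Euler sequence restricted from the embedding, together with Riemann–Roch:
\[
\chi(\Omega_{S'}(b))=b^2d-20 .
\]
The delicate case is $b=1$ for $d=4,6,8$, where $\chi=4d-20$ is small. There I would try to use the explicit geometry of the general quartic, the $(2,3)$ complete intersection, and the degree-$8$ K3 (an intersection of quadrics). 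The conormal sequence $0\to N^\vee(1)\to\Omega_{\mathbb{P}^n}(1)|_{S'}\to\Omega_{S'}(1)\to 0$ shows that $H^0(\Omega_{S'}(1))$ contains the image of $\wedge^2$ of the linear forms. Even if global generation fails, it then suffices to check that the sections have finitely many zeros and that the base locus of the family of curves $C$ is finite. Smoothness of the general $C$ would then be checked at the base points by hand, as in the simple-base-point argument of Lemma~\ref{remarksmoothcurve}.
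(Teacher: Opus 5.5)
\textbf{Genuine gaps: three of the four claimed families fail as proposed.} Only the $d=4$, $(2,2)$ family in $|\mathcal{O}(4)|$ goes through as written. There $\Omega_{S'}(2)$ is a quotient of the globally generated bundle $\Omega_{\mathbb{P}^3}(2)|_{S'}$; this is the paper's first type of construction.

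\textbf{The $|\mathcal{O}_{S'}(3)|$ families in (ii) and (iii).} With $b_1+b_2=3$ you need a divisor in $|\xi+h|$, that is, a nonzero section of $\Omega_{S'}(1)$. But $H^0(\Omega_{S'}(1))=0$ for the general K3 of degree $4,6,8$, which are complete intersections:
\begin{itemize}
\item in the conormal sequence, $H^1(N^\vee(1))=0$ by Kodaira vanishing;
\item $H^0(\Omega_{\mathbb{P}^n}(1)|_{S'})=0$ by the Koszul resolution and Bott vanishing.
\end{itemize}
The sections you attribute to $\wedge^2$ of the linear forms are sections of $\Omega_{\mathbb{P}^n}(2)$, whereas $H^0(\Omega_{\mathbb{P}^n}(1))=0$. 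Also $\chi(\Omega_{S'}(1))=d-20$, not $4d-20$. Your $d=8$ arithmetic is wrong as well: $-24+1\cdot 2\cdot 8=-8$. A degree-$0$ quotient with $b_1+b_2=3$ would need $b_1b_2=3$, which is impossible, so (iii) cannot be reached by this ansatz at all.

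The missing idea is the second construction the paper recalls: the ramification curve $R$ of a general linear projection $\phi\colon S'\to\mathbb{P}^2$.
\begin{itemize}
\item $R$ lies in $|K_{S'}+3H|=|\mathcal{O}_{S'}(3)|$ and moves with the centre of projection.
\item $\Omega_{S'}|_R\to\Omega_{S'/\mathbb{P}^2}$ is a line-bundle quotient. From $c_2(\Omega_{S'})=24$ and $0\to\phi^*\Omega_{\mathbb{P}^2}\to\Omega_{S'}\to\Omega_{S'/\mathbb{P}^2}\to 0$, its degree is $3d-24$, i.e.\ $-12,-6,0$ for $d=4,6,8$.
\item Its lift to $P$ has class $\xi^2+3\xi h+3h^2$, which is not of the form $(\xi+b_1h)(\xi+b_2h)$.
\end{itemize}
For (iii) you must also prove that $\Omega_{S'}|_R$ is semistable. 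A degree-$0$ quotient only shows that it is not stable.

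\textbf{The case $d=2$ with $b_1=b_2=3$.} Here the general K3 is a double plane $h\colon S'\to\mathbb{P}^2$ branched along a sextic $B$. One has $h_*\Omega_{S'}=\Omega_{\mathbb{P}^2}\oplus\Omega_{\mathbb{P}^2}(\log B)(-3)$ and $H^0(\Omega_{\mathbb{P}^2}(\log B))=0$, so every section of $\Omega_{S'}(3)$ is a pull-back $h^*\omega$. At each point of the ramification curve $R$, all such forms lie in the line $\ker(\Omega_{S'}|_R\to\mathcal{O}_R(-R))$. Hence $s_1\wedge s_2$ vanishes on $R$, and every $C$ equals $R+h^{-1}(\text{cubic})$, which is reducible. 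Equivalently, the lift of $R$ lies in the base locus of $|\xi+3h|$, and $\xi+3h$ has degree $0$ on it.

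Castelnuovo--Mumford regularity cannot rescue this: $\chi(\Omega_{S'}(2))=-12$ and $h^2(\Omega_{S'}(2))=0$ force $H^1(\Omega_{S'}(2))\neq 0$. An unbalanced choice such as $(b_1,b_2)=(2,4)$ appears to avoid the problem. It still gives degree $-8$ in $|\mathcal{O}(6)|$, and $\Omega_{S'}(4)$ is globally generated.
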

The contructions in \cite{GO} are fundamental for this work. However it is not so straightforward   to pass from the $K3$ case to the Enriques, even if the  constructions given in \cite{GO} can be generalized to arbitrary polarized $\operatorname{K3}$ (of the same degrees as in the statement). Indeed, given an Enriques surface $(S,H)$, one would be tempted to consider its $K3$ cover $f: S'\rightarrow S$,  and consider the  images  in $S$ of destabilizing curves in $|f^*H|$. However, there is a problem with this approach: the locus of curves $C \in |f^*H|$ such that $f(C)$ is  is a smooth curve is a locally  closed subspace of  $|f^*H|$ (this the locus of smooth curves invariant by the involution), and it does have dimension close to half of the dimension of $|f^*H|$, while the dimension of the families contructed in \cite{GO} is too small to conclude these loci have non empty-intersection. 
\\
\\
The other approach is trying to adapt their construction in our setting. In the following we will  briefly describe the constructions, pointing out how sometimes adapting the constructions in \cite{GO} works, and when we need some more work to do. The first example we want to mention holds for an unnodal Enriques surface. Recall that an unnodal Enriques surface is one that does not contain any smooth rational curve, and the general Enriques is unnodal (see  Remark \ref{remarksinguell}). The structure of the proof is based on the given in \cite[Section 4.1]{GO}, where it is shown that the ramification curve of a double cover $S' \rightarrow \mathbb{P}^2$ destabilizes the cotangent bundle. Here $S'$ denote a $K3$ surface. 
\begin{prop}
\label{doublecover}
Let $S$ be an unnodal Enriques surface, and let $|2F_i|$, $i=1,2$ be two half elliptic pencils on $S$ such that $F_1 \cdot F_2=1$.  Let $f: S' \rightarrow S$ be the $\operatorname{K3}$-cover. Then  there exists a smooth irreducible curve  $R \in |2(f^*F_1 + f^*F_2)|$ such that $C:=f(R)$ is a smooth and irreducible curve of genus $5$, and $\Omega_{S}|_C$ is not semistable.
\end{prop}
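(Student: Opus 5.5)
The plan is to pass to the K3 cover and realize $R$ as the ramification curve of a double cover of $\mathbb{P}^1\times\mathbb{P}^1$, adapting \cite[Section 4.1]{GO}. Let $\iota$ be the fixed-point-free covering involution of $f$ and set $D_i:=f^*F_i$. On $S'$ we have $D_i^2=0$ and $D_1\cdot D_2=2F_1\cdot F_2=2$. Each $|D_i|=f^*|2F_i|$ is an elliptic pencil on $S'$, giving a morphism $\varphi_i:S'\to\mathbb{P}^1$. Set
\[
\varphi=(\varphi_1,\varphi_2):S'\to Q:=\mathbb{P}^1\times\mathbb{P}^1 .
\]
This is a finite morphism of degree $D_1\cdot D_2=2$ with $\varphi^*\mathcal{O}_Q(1,1)=D_1+D_2$. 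So $\varphi$ is a double cover branched along a curve $B\in|\mathcal{O}_Q(4,4)|$, with ramification curve $R$ satisfying $2R=\varphi^*B$, hence $R\in|2(D_1+D_2)|$ and $R^2=16$. The involution $\iota$ preserves each pencil $|D_i|$, so it descends to an involution $\sigma=(\sigma_1,\sigma_2)$ of $Q$. Then $\varphi\circ\iota=\sigma\circ\varphi$, so $\iota$ commutes with the covering involution $\tau$ of $\varphi$, and $R=\operatorname{Fix}(\tau)$ is $\iota$-invariant.

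The first step, which I expect to be the main obstacle, is smoothness of $B$, equivalently of $R$. If $B$ were singular, $S'$ would contain $(-2)$-curves $N$ contracted by $\varphi$, so $N\cdot(D_1+D_2)=0$. Since $(D_1+D_2)^2=4>0$, the Hodge index theorem makes $(D_1+D_2)^\perp$ negative definite. Applying this to $N+\iota N$ gives $N\cdot\iota N\le 1$, with $\iota N\neq N$ because $\iota$ is free and $N\simeq\mathbb{P}^1$. If $N\cdot\iota N=0$, then $f(N)$ is a smooth rational curve on $S$, contradicting unnodality. If $N\cdot\iota N=1$, then
\[
2f(N)^2=(N+\iota N)^2=-2,
\]
so $f(N)^2=-1$, which is impossible because $\operatorname{Num}(S)$ is even. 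Hence $B$ and $R$ are smooth. I would double-check that $\varphi$ is genuinely finite, i.e. that no curves other than these $(-2)$-curves are contracted; this follows because $D_1+D_2$ is big and nef.

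Next I would establish irreducibility and the genus. Since $R\simeq B$ and $B$ is a smooth ample curve on $Q$, $R$ is irreducible of genus $g(B)=9$. Because $\iota$ is free and preserves $R$, the map $f|_R:R\to C:=f(R)$ is an étale double cover onto a smooth curve. Riemann–Hurwitz gives $2g(C)-2=8$, so $g(C)=5$. Moreover $f^*C=R$, so $C\in|2(F_1+F_2)+\epsilon K_S|$ and $C^2=8$.

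Finally I would show destabilization. Consider the map $\varphi^*\Omega_Q|_R\to\Omega_{S'}|_R$. Along $R$ the differential $d\varphi$ has rank one, with image $TR$, so the image of this map is a rank-one torsion-free quotient $M$ of $\varphi^*\Omega_Q|_R$. Composing with $\Omega_{S'}|_R\to\Omega_R$ gives $\varphi|_R^*\Omega_B\to\Omega_R$, which is an isomorphism since $R\simeq B$. Hence $M\to\Omega_R$ is surjective and therefore an isomorphism, so $\Omega_R\hookrightarrow\Omega_{S'}|_R$ splits the conormal sequence. Since $\deg\Omega_R=16>0=\deg\Omega_{S'}|_R$, the restriction $\Omega_{S'}|_R$ is not semistable. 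Because $(f|_R)^*(\Omega_S|_C)=\Omega_{S'}|_R$ and semistability is preserved under pullback by finite étale maps (the Harder–Narasimhan filtration is Galois invariant and descends), $\Omega_S|_C$ is not semistable. Concretely, $\Omega_C\subset\Omega_S|_C$ has degree $8>0$.
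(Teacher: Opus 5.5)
Your proof is correct and follows the same route as the paper: the Horikawa double cover $S'\to\mathbb{P}^1\times\mathbb{P}^1$, the genus-$9$ ramification curve $R$, its \'etale quotient $C$ of genus $5$, and destabilization by $\Omega_R\subset\Omega_{S'}|_R$ descended to $C$. Along the way you supply details the paper delegates to \cite{BHPV} and \cite{GO}, notably the smoothness of $B$ deduced from unnodality and the $\iota$-invariance of $R$. One harmless slip: $f^*|2F_i|$ lies in $|2D_i|$, not $|D_i|$; the pencil you need is the one containing the connected \'etale preimage $D_i$ of the half-fibre $F_i$, and that is all your argument actually uses.
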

\begin{proof}
First observe that if $S$ is an unnodal Enriques surface,  we can always find $F_i$, $i=1,2$, as in the statement (see \cite[Theorem 17.7]{BHPV}). We consider the classical Horikawa construction as described for example in  \cite[pp.345-346]{BHPV}.  Let $F_1$ and $F_2$ be half-pencils with $F_1 \cdot F_2=1$.  The pull-back $f^*F_i$ gives a base-point free linear system and the map associated to $|f^*F_1 + f^*F_2|$ is a finite degree $2$ morphism from $S'$ to a  quadric $Q \simeq \mathbb{P}^1 \times \mathbb{P}^1 \subset \mathbb{P}^3$ branched along a smooth curve $B \subset \mathbb{P}^1 \times \mathbb{P}^1 $ of bidegree $(4,4)$. We denote by $h$ the morphism $S' \rightarrow \mathbb{P}^1 \times \mathbb{P}^1$. Moreover we can fix coordinates $[x_0:x_1:y_0:y_1]$ on $\mathbb{P}^1 \times  \mathbb{P}^1$ such that $B$ is invariant by the involution $i:[x_0:x_1:y_0:y_1] \rightarrow [x_0:-x_1:y_0:-y_1]$. Moreover, we can suppose that the   involution $i$  lifts to the involution  $\tau$ on $S'$ giving the double cover $S' \rightarrow S$. Let $R \simeq B$ be the ramification curve on $X$. Then $\tau(R)=R$, and $R$ is $2:1$ cover of a smooth curve $C:=f(R)$ in $S$ (see \cite{GO}). Since   $\Omega_{S'|R} \simeq  f^*_{|R}\Omega_{S}|_C $ is not semistable, we conclude that $C$ is a destabilizing curve for $\Omega^1_S$.  Since $R \in |h^*\mathcal{O}_{\mathbb{P}^1 \times \mathbb{P}^1}(2,2)|= |2f^*F_1+2F^*_2|$, we conclude that $R^2=16$, $g(R)=9$, $C^2=8$, and $g(C)=5$.
\end{proof}
\begin{remark}
While the previous proposition gives an example of a smooth destabilizing curve in the interesting range covered by Theorem \ref{maintheorem1}, it is still not satisfactory because the curve does not move in a family (of curves satisfying the same property).
\end{remark}
Before giving the main result of this section we briefly recall the contructions in Theorem \ref{thmgo}. Let $(S',H')$ be a polarized $K3$ surface,  denote by  $\pi: \mathbb{P}(\Omega_{S'}) \rightarrow S'$ the bundle morphism, and set  $L:=\mathcal{O}_{\mathbb{P}(\Omega_{S'})}(1)$.  In loc. cit the authors construct some families of curves $\{C'_t\} \subset \mathbb{P}(\Omega_S)$ such that $\pi^*\Omega_{S'}|_{C^{'}_t}$  is not semistable, and $L|_{C^{'}_t}$ is the destabilizing quotient. Then they show that $\pi|_{C^{'}_t}$ is an isomorphism onto the image, which is then a smooth curve $C_t$ such that  $\Omega_{S'}|_{C_t}$ is not semistable. The families given  are of two types:
\begin{enumerate}
    \item $\{C'_t$\} are either a smooth complete intersections of two ample base point-free divisors in $\mathbb{P}(\Omega_{S'})$ of the form $|\pi^*(mH) +L|$. This is the case: $d=2$, $C'_t \in |\mathcal{O}_{S'}(3)|$, and $d=4$, $C'_t \in |\mathcal{O}_{S'}(4)|$ in  Theorem \ref{thmgo}.
    
    \item $\{C'_t\}$ are ramification curves of general projections $S \rightarrow \mathbb{P}^2$. This is the case $d=4,6$,  and $C'_t \in |\mathcal{O}_{S'}(4)|$ in  Theorem \ref{thmgo}. 
\end{enumerate}
One can easily see that trying to carry out the type  $(2)$ construction for an Enriques surface $S \subset \mathbb{P}^{g-1}$ does not work because  of the numerics involved in the  Enriques case. What we do is to follow the second approach. However, the construction does not immediately follow from the $\operatorname{K3}$ case: we strongly rely on the existence of elliptic fibrations on Enriques surfaces, and a construction of Höring and Peternell (\cite{HP}). See also Remark \ref{remarkcomparison}.
\\
\\
From now on let $S$ be an Enriques surface, and $\pi: \mathbb{P}(\Omega_S) \rightarrow S$ the bundle morphism.  Now we will use a construction given in \cite[Section 5.1]{HP}.  

Let $f: S \rightarrow \mathbb{P}^1$  an elliptic fibration. Set $D:= \sum_{p \in \mathbb{P}^1} f^*p-(f^*p)_{red}$. Then the short exact sequence
\[
0 \rightarrow f^* \omega_{\mathbb{P}^1} \rightarrow \Omega_S \rightarrow \Omega_{S/\mathbb{P}^1} \rightarrow 0
\]
induces an exact sequence
\begin{equation}
\label{shortexactsequence}
0 \rightarrow f^* \omega_{\mathbb{P}^1}(D) \rightarrow \Omega_S \rightarrow I_Z \otimes \omega_{S/\mathbb{P}^1}(-D) \rightarrow 0.
 \end{equation}
where $Z$ is a local complete intersection scheme of codimension $2$ with support equal to the singular points of the reduction of the fibres $(f^*p)_{red}$. The  sequence \eqref{shortexactsequence} holds  for any elliptic fibration $f: S \rightarrow C$ from a surface to a curve. Now we briefly describe  a construction.  Consider the short exact sequence:
\begin{equation}
\label{sequence3}
0 \rightarrow f^*\omega_C \rightarrow \Omega_S \rightarrow \Omega_{S/C} \rightarrow 0.
\end{equation}
Dualizing \eqref{sequence3} we obtain:
\[
0 \rightarrow \Omega_{S/C}^{\vee}  \rightarrow T_S \rightarrow f^*T_{C} \rightarrow  \mathcal{E}:=\mathcal{E}xt(\Omega_{S/C},\mathcal{O}_C) \rightarrow 0,
\]
where $\Omega_{S/C}^{\vee}:=\mathcal{H}om(\Omega_{S/C}, \mathcal{O}_{C})$. Here the sheaf $\Omega_{S/C}^{\vee}$ is locally free (see for example \cite{Se}). Denote by $J$ the image of  $T_S \rightarrow f^*T_{C}$. Then we get the following commutative diagram:
$$
\xymatrix{
0 \ar[r] & \Omega_{S/C}^{\vee} \ar[r]  \ar[d]  & T_{S} \ar[r]  \ar[d]  & J  \ar[r]   \ar[d]  & 0 & \\
0 \ar[r] & \Omega_{S/C}^{\vee}  \ar[r] & T_{S} \ar[r] & J^{\vee \vee} \ar[r] & Q \ar[r]  &0 \\ }
$$
where  the cokernel  $Q \simeq J^{\vee \vee}/J$ is  is the structure sheaf of a zero dimensional subscheme $Z$. In particular we get: 
\[
0 \rightarrow  \Omega_{S/C}^{\vee}  \rightarrow T_{S}  \rightarrow I_{Z} \otimes J^{\vee \vee}  \rightarrow 0.
\]
From \cite[Proposition 3.1]{Se},  we know that $J^{\vee \vee} \simeq f^*\omega_{C}^{\vee}(-D)$ and $ \Omega_{S/C}^{\vee } \simeq  \omega_{S/C}^{\vee}(D)$. Hence we deduce
\[
0 \rightarrow  f^*\omega_{C} (D)  \rightarrow T_{S}  \otimes \omega_{S} \rightarrow I_{Z} \otimes   \omega_{S/C}(-D)  \rightarrow 0.
\]
Since for a smooth surface: $ \Omega_{S} \simeq T_{S} \otimes \omega_{S}$, we deduce \eqref{shortexactsequence} as desidered. 
\\
\\
As in \cite{HP} we consider the surface
\[
 S_1:=\mathbb{P}(I_Z \otimes \omega_{S / \mathbb{P}^1}(-D)).
 \]
 Denote by $2F$ the class of the fiber. From  \eqref{shortexactsequence} we see that $S_1$ is a divisor in $\pi: \mathbb{P}(\Omega_S) \rightarrow S$, and 
 \begin{equation}
 \label{hpdivisor}
S_1  \in |L-\pi^*(c_1(f^*\omega_{\mathbb{P}^1}(D)))|=|L+4\pi^*F-\pi^*D|.
 \end{equation}
Moreover, since $Z$ is a local complete intersection of codimension $2$,  we have that $S_1$ coincides with the blow-up of $S$ at $Z$. In particular it is an integral surface. We can actually say more in the case of a general Enriques surface.
\begin{lemma}
\label{singularity}
Suppose that the reduction of each singular fiber of $f: S \rightarrow \mathbb{P}^1$ is an  irreducible nodal curve. Then $S_1$ is a smooth irreducible surface. In particular this holds for any elliptic fibration on a very general Enriques surface.
\end{lemma}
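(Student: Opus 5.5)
Since $S_1$ is the blow-up of $S$ along the local complete intersection scheme $Z$, smoothness of $S_1$ is a local question at the points of $\operatorname{Supp}(Z)$, and away from $Z$ the map $S_1\to S$ is an isomorphism. The plan is to compute the ideal of $Z$ explicitly in local coordinates around each point of its support. Under the hypothesis, the blow-up centre is the reduced point, and the blow-up of a smooth surface at a reduced point is smooth.

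First we identify $Z$ and $D$ under the hypothesis. If the reduction of every singular fibre is an irreducible nodal curve, then each fibre is either reduced or a multiple fibre $2F$ with $F$ smooth or nodal. On an Enriques surface the only multiple fibres are the two half-fibres, and these contribute to $D$. By Remark \ref{remarksinguell}, on a very general Enriques surface the half-fibres are smooth elliptic curves and the remaining singular fibres are irreducible nodal curves of type $I_1$. The unnodal hypothesis and Theorem \ref{teoknutsengalati} exclude rational curves that are not $2$-divisible; a reducible or cuspidal fibre would produce smooth rational components or violate divisibility, and we would invoke the Kodaira classification to rule these out. This gives the final sentence of the statement. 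The support of $Z$ is then the set of singular points of $(f^*p)_{red}$, that is, the nodes of the nodal fibres (the reduced half-fibre is smooth if the half-fibre is smooth, and a nodal multiple fibre contributes its node).

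Next we compute the local ideal of $Z$. At a node $x$ of a reduced fibre, choose local coordinates $(u,v)$ with $f=t=uv$. Then $df=v\,du+u\,dv$, and $D$ is trivial near $x$, so the inclusion $f^*\omega_{\mathbb{P}^1}\to\Omega_S$ is given by $(v,u)$. The torsion-free quotient is $I_Z\otimes(\text{line bundle})$ with $I_Z=(u,v)=\mathfrak m_x$, so $Z$ is the reduced point. At a node of a nodal half-fibre the local equation is $t=(uv)^2$. Then $df=2uv(v\,du+u\,dv)$, and saturating by the divisor $D=(uv)$ again gives the section $(v,u)$, so $I_Z=\mathfrak m_x$. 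In every case $Z$ is a finite set of reduced points.

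Finally, we conclude that $S_1=\operatorname{Bl}_Z S$ is the blow-up of a smooth surface at finitely many reduced points. Hence it is smooth, and it is irreducible because $S$ is. The main obstacle is the first step, namely justifying rigorously that no fibre of other Kodaira type (such as types $II$, $III$, $I_n$ with $n\ge2$, or a nodal half-fibre) occurs on a very general $S$. We would handle this by noting that a component of type $I_n$ with $n\ge 2$ is a $(-2)$-curve, which is excluded since $S$ is unnodal, and that a cuspidal rational fibre of type $II$ has class $2F$ or $F$ and would have to be $2$-divisible; this requires a careful check against Theorem \ref{teoknutsengalati} or a dimension count.
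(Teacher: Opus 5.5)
Your proof of the first assertion is correct and is, in substance, the paper's argument. The paper reads off $\operatorname{length}(\mathcal O_{Z,x})=2\delta-\nu+1=1$ at a node from Serrano's formula. You instead compute $I_Z=\mathfrak m_x$ directly, from $df=v\,du+u\,dv$ at a node of a reduced fibre, and from $df=2uv(v\,du+u\,dv)$ saturated by $D=(uv)$ at a node of a nodal half-fibre. Either way $Z$ is a finite set of reduced points, so $S_1=\operatorname{Bl}_Z S$ is smooth.

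The gap is in the last sentence of the statement, the claim about very general Enriques surfaces. Remark \ref{remarksinguell} only concerns half-fibres, so it does not tell you that the non-multiple singular fibres are nodal. Your fallback cannot close this for type $II$. A cuspidal fibre is never multiple, since multiple fibres are of type ${}_2I_n$. Its class is therefore the full fibre class $2F$, which \emph{is} $2$-divisible in $\operatorname{Num}(S)$, just like the class of the $I_1$ fibres that certainly occur. So Theorem \ref{teoknutsengalati} gives no obstruction, and unnodality is irrelevant because no $(-2)$-curve is involved.

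Unnodality does exclude every reducible type, including ${}_2I_n$ with $n\ge 2$. A nodal half-fibre is allowed by the hypothesis and covered by your local computation. So Theorem \ref{teoknutsengalati} is not needed here at all.

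What is missing is a genuine genericity input ruling out cuspidal fibres in every genus-one fibration of a very general $S$. The paper takes this from the literature quoted in Remark \ref{remarknodalfibres}: on a very general Enriques surface every genus-one fibration has exactly $12$ singular fibres, all of type $I_1$. To prove it yourself you would have to carry out the dimension count you only allude to. There are countably many fibrations, one for each primitive isotropic nef class in $\operatorname{Num}(S)$. For each of them you must show that acquiring a type $II$ fibre is a proper closed condition on the moduli space, and none of this follows from Theorem \ref{teoknutsengalati}.
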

\begin{proof}
We only need to show that it is smooth. Let $ x$ be a point in the support of $Z$. By \cite[Proposition 3.1]{Se}  we know that $\mathcal{O}_{Z}$ is supported at the singular points of the reduction of the fibers. More precisely we have:
\begin{equation}
\operatorname{length}(\mathcal{O}_{Z,x})=\sum_{k \in L} (2\delta_{x}(E_{k})-\nu_{x}(E_{k}))+ \sum_{(k,l) \in L \times L, \ k \neq l}(E_{k} \cdot (E_{l})_{x}+ 1,
\end{equation}
where 
\begin{itemize}
\item $ \{E_k\}_{k \in L}$ is the set of components of $f^{-1}(f(x))$ passing through $x$. 
\item  $\delta_x(E):=\operatorname{codimension}$ of $\mathcal{O}_{E,x}$ in its normalization (as vector space over the ground field). 

\item $\nu_{x}(E)$:= number of analytic branches of $E$ at $x$. 

\item $(E_{k} \cdot E_{l})_{x}$:= local intersection number of $E_{k}$, $E_{l}$ at $x$.
\end{itemize}
Suppose $E:=f^{-1}(f(x))_{red}$ is an irreducible nodal curve, and $x \in Z$. Then the multiplicity of $Z$  in $x$ is $1$. Indeed the number of analytic branches is $2$, and the codimension of  $\mathcal{O}_{E,x}$ in its normalization (as vector space over the ground field) is one. In particular, if we suppose that the fibers of the fibration are at most nodal, we get that $S_1$ is just the blow-up of $c_{2}(\mathcal{O}_{Z})$ distinct points. For the claim about very general Enriques surfaces we refer to Remark \ref{remarknodalfibres}.
\end{proof}

We will also need the following lemma:
\begin{prop}
\label{globallygeneratedeNriques}
Let $S$ be an unnodal  Enriques surface. Let $F_1$ and $F_2$ be two half pencils such that $F_1 \cdot F_2=1$. Set $H:=F_1+F_2$. Then $\Omega_S(4H)$ is globally generated. 
\end{prop}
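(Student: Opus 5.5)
The plan is to deduce global generation from Castelnuovo--Mumford regularity with respect to a base point free ample line bundle. Put $A:=2H=2F_1+2F_2$. Since $S$ is unnodal, $A$ is ample. We have $A^2=8$ and $A\cdot F_i=2$, and every isotropic class meets $A$ in an even number, so $\phi(A)=2$. Theorem \ref{etoegkveraglob} then shows that $|A|$ is base point free. Mumford's lemma for a globally generated ample $A$ says: if a coherent sheaf $\mathcal{G}$ satisfies $H^i(\mathcal{G}\otimes A^{-i})=0$ for $i=1,2$, then $\mathcal{G}$ is globally generated. We apply it to $\mathcal{G}=\Omega_S(4H)=\Omega_S(2A)$. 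The conditions to check are
\[
H^1(\Omega_S(2H))=0, \qquad H^2(\Omega_S)=0 .
\]

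The second condition is immediate. Serre duality gives $H^2(\Omega_S)^\vee\simeq H^0(T_S\otimes K_S)$. On a surface $T_S\otimes K_S\simeq\Omega_S$, and $h^0(\Omega_S)=q(S)=0$ for an Enriques surface. So $H^2(\Omega_S)=0$.

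For the first condition we pass to the K3 cover $f:S'\to S$. Set $L:=f^*H=f^*F_1+f^*F_2$. By the Horikawa description recalled in the proof of Proposition \ref{doublecover}, $|L|$ is base point free with $L^2=4$ and defines the double cover $h:S'\to Q\simeq\mathbb{P}^1\times\mathbb{P}^1$. Since $f$ is étale, $f^*\Omega_S=\Omega_{S'}$. Also $f_*\mathcal{O}_{S'}=\mathcal{O}_S\oplus K_S$, so $H^1(\Omega_S(2H))$ is a direct summand of $H^1(\Omega_{S'}(2L))$. It therefore suffices to prove $H^1(\Omega_{S'}(2L))=0$, where $2L=h^*\mathcal{O}_Q(2,2)$. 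For this we use the relative cotangent sequence of $h$:
\[
0\to h^*\Omega_Q\to \Omega_{S'}\to \Omega_{S'/Q}\to 0 .
\]
Here $\Omega_{S'/Q}\simeq\mathcal{O}_R(-R)$ is supported on the ramification curve $R\in|2L|$. Twisting by $2L$, we need the following.

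First, we need $H^1(h^*\Omega_Q(2L))=0$. The projection formula gives $h_*\mathcal{O}_{S'}=\mathcal{O}_Q\oplus\mathcal{O}_Q(-2,-2)$. We have $\Omega_Q=\mathcal{O}_Q(-2,0)\oplus\mathcal{O}_Q(0,-2)$. Hence the required group is the sum of the $H^1$ of $\mathcal{O}_Q(0,2)$, $\mathcal{O}_Q(2,0)$, $\mathcal{O}_Q(-2,0)$ and $\mathcal{O}_Q(0,-2)$. The first two vanish. Each of the last two has $H^1\simeq H^0(\mathcal{O}_{\mathbb{P}^1}(-2))\otimes H^1(\mathcal{O}_{\mathbb{P}^1})\oplus H^1(\mathcal{O}_{\mathbb{P}^1}(-2))\otimes H^0(\mathcal{O}_{\mathbb{P}^1})\simeq k$ by Künneth, so this term does not vanish by itself.

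Second, we need $H^1(\mathcal{O}_R(2L-R))=H^1(\mathcal{O}_R)$, which is nonzero as well. So the naive cohomology estimate fails. Instead we must show that the connecting map
\[
H^0(\Omega_{S'/Q}(2L))=H^0(\mathcal{O}_R)\to H^1(h^*\Omega_Q(2L))
\]
is surjective onto the two-dimensional anti-invariant part, and that $H^1(h^*\Omega_Q(2L))\to H^1(\Omega_{S'}(2L))$ kills the rest. The anti-invariant part is the part coming from $\mathcal{O}_Q(-2,-2)\otimes\Omega_Q(2,2)$.

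This bookkeeping is the main obstacle. My first attempt will be to exploit the involution of the double cover and its compatibility with $\tau$, as in the proof of Proposition \ref{doublecover}, working equivariantly with respect to the deck involution of $h$. If that becomes unwieldy, the fallback is to avoid $H^1(\Omega_{S'}(2L))$ altogether. One would take a smooth $C\in|A|$ and use
\[
0\to\Omega_S(A)\to\Omega_S(2A)\to\Omega_S(2A)|_C\to 0 .
\]
On $C$ one has $\deg\Omega_S(2A)|_C=2\cdot 2A^2=32$ in rank $2$ and genus $g(C)=5$. Here $\Omega_S|_C$ is semistable of degree $0$ by Theorem \ref{maintheorem1}, so $\Omega_S(2A)|_C$ is semistable of slope $16>2g-1=9$ and hence globally generated. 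One would then lift sections, which needs $H^1(\Omega_S(A))=0$. By Serre duality this is dual to $H^1(\Omega_S(-A))$, and one would attack it through the restriction sequence to $C$ together with $H^0(\Omega_S|_C)=0$, which again follows from semistability of degree $0$ plus non-triviality.
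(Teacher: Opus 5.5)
Your proposal has a genuine gap. The first regularity condition for $\mathcal{G}=\Omega_S(2A)$ with $A=2H$ is $H^1(\Omega_S(2H))=0$, and this vanishing is false.

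Riemann--Roch on an Enriques surface ($K_S\equiv 0$, $c_2(S)=12$, $\chi(\mathcal{O}_S)=1$) gives $\chi(\Omega_S(D))=D^2-10$ for every line bundle $D$. Hence $\chi(\Omega_S(2H))=8-10=-2$. Since $h^2(\Omega_S(2H))=h^0(\Omega_S(-2H))=0$, this forces $h^1(\Omega_S(2H))\geq 2$.

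Your K3 computation confirms this. The relevant group is $H^2(h^*\Omega_Q(2L))=H^2(\Omega_Q(2,2))\oplus H^2(\Omega_Q)=0$. So $H^1(\Omega_{S'}(2L))$ surjects onto $H^1(\mathcal{O}_R)\cong k^9$. Also, the connecting map out of the one-dimensional $H^0(\mathcal{O}_R)$ could never surject onto the two-dimensional $H^1(\Omega_Q)$. No equivariant bookkeeping can rescue this.

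The fallback needs exactly the same vanishing $H^1(\Omega_S(A))=0$ to lift sections from $C$, so it fails for the same reason. Two further problems there: Theorem \ref{maintheorem1} is only available for very general $S$, not merely unnodal $S$. And semistability in degree $0$ does not imply $H^0(\Omega_S|_C)=0$.

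The missing idea is to avoid any vanishing statement at the level of $\Omega_S(2H)$ altogether. The paper twists the relative cotangent sequence of $h$ by $4H'\sim 2R$ rather than by $2L$. This gives $0\to h^*\Omega_Q(4,4)\to\Omega_{S'}(4H')\to\omega_R\to 0$. Now $H^1(h^*\Omega_Q(4,4))=H^1(\Omega_Q(4,4))\oplus H^1(\Omega_Q(2,2))=0$, so every section of $\omega_R$ lifts, and averaging over $\tau$ lets invariant sections lift to invariant ones.

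Global generation is then checked fibrewise:
\begin{itemize}
\item Explicit $\tau$-invariant sections generate $h^*\Omega_Q(4,4)$; the summand twisted by $t$ is needed at the four fixed points of $i$.
\item The $\tau$-invariant part of $H^0(\omega_R)$ is identified with $H^0(\omega_{f(R)})$, which is base point free since $g(f(R))=5$.
\item At a point of $R$, a section coming from the subsheaf and a lift of a nonvanishing section of $\omega_R$ together span the fibre.
\end{itemize}
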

\begin{proof}
We use the same construction and notations as in Proposition \ref{doublecover}. Let $R$ be the ramification curve of the double covering (a $K3$ surface) $h:S' \rightarrow Q=\mathbb{P}^1 \times   \mathbb{P}^1  \subset \mathbb{P}^3$  branched along a curve $B \subset \mathbb{P}^1 \times \mathbb{P}^1$ of bidegree $(4,4)$.   Set $H':=f^*H=h^*\mathcal{O}_{\mathbb{P}^1 \times \mathbb{P}^1}(1,1)$.  Recall that $R \in  |2H'|$. We have the following short exact sequence:
\begin{equation}
\label{sequence1}
0 \rightarrow h^*\Omega_{Q}  \rightarrow  \Omega_{S'}  \rightarrow  \omega_h \simeq \mathcal{O}_{R}(-R) \rightarrow 0,
\end{equation}
which we tensor by $4H' \sim 2R$:
\begin{equation}
\label{sequence2}
0 \rightarrow h^*\Omega_{\mathbb{P}^1 \times \mathbb{P}^1}(4,4) \rightarrow  \Omega_{S'}(4H')  \rightarrow    \omega_R \rightarrow 0.
\end{equation}
Notice that since $f^*(4H)=4H'=h^*\mathcal{O}_{\mathbb{P}^1 \times \mathbb{P}^1}(4,4)$, and $f \circ \tau=f$,  we have: $\tau^*(4H')=\tau^*(f^*(4H))=f^*(4H)=4H'$. The global sections of $\Omega_{S}(4H)$ are canonically identified  with elements in $H^0(\Omega_{S'}(4H'))$ which are invariant under $\tau$.  Hence we need to show that $\tau$-invariant global sections generate $\Omega_{S'}(4H')$ at every point $p \in S'$.

We first show that the result follows from the following assumptions:
\begin{enumerate}
    \item $h^*\Omega_{\mathbb{P}^1\times\mathbb{P}^1}(4,4)$   is generated  by  $\tau$-invariant global sections at every point $ p \in S'$ ($\tau$-invariant sections are sections whose image in $\Omega_{S'}(4H')$ is $\tau$-invariant).

    \item  $\tau_{|R}$-invariant global sections of $H^0(R,\omega_R)$ generate the fiber of $\omega_R$ at every points $p \in R$   
\end{enumerate}
Let $p \in S$ be a point. Since $H^1(S',h^*\Omega_{\mathbb{P}^1\times\mathbb{P}^1}(4,4))=0$ we have the following diagram, where vertical arrows are evaluation morphisms:
\[
\resizebox{\linewidth}{!}{$
\begin{CD}
@. 0 @>>> H^0(S',h^*\Omega_{\mathbb{P}^1\times\mathbb{P}^1}(4,4))\otimes \mathcal{O}_{S'}
  @>>> H^0(S',\Omega_{S'}(4H'))\otimes \mathcal{O}_{S'}
  @>>> H^0(R,\omega_R)\otimes \mathcal{O}_{S'} @>>> 0 \\
@. @VVV @VVV @VVV @VVV @. \\
0 @>>> \operatorname{Tor}_1^{\mathcal{O}_{S'}}(\mathcal{O}_R,k(p))
  @>>> h^*\Omega_{\mathbb{P}^1\times\mathbb{P}^1}(4,4)\otimes k(p)
  @>>> \Omega_{S'}(4H')\otimes k(p)
  @>>> \omega_R \otimes k(p) @>>> 0.
\end{CD}
$}
\]

First suppose that $p \notin R$. Then, since $h^*\Omega_{\mathbb{P}^1\times\mathbb{P}^1}(4,4)$ is globally generated by $\tau$-invariant sections, it immediately follows that also $\Omega_S(4H')$ is. Now take $p \in R$. Let $s_1 \in H^0(S',\Omega_S(4H')$ be a $\tau$-invariant global section coming from   $h^*\Omega_{\mathbb{P}^1\times\mathbb{P}^1}(4,4)$.   Let  $s_2$ be a lift of a $\tau_{|R}$-invariant section in $H^0(R,\omega_R)$ which does not vanish in $p$.  From the diagram we see that $s_1(p)$ generates a one dimensional subspace of $\Omega_{S'}(4H') \otimes k(p) \simeq k^{\oplus 2}$, and hence $\langle s_1(p), s_2(p)\rangle$ generates the fiber of $\Omega_{S'}(4H')$ at p and the conclusion follows. Then we just need to show that $(1)$ and $(2)$ hold.

We start with $(1)$. We need a more careful analysis of the geometry of the Horikawa construction. Recall that the double cover $S' \rightarrow \mathbb{P}^1 \times \mathbb{P}^1$ is determined by a homogenous polynomial  $s$ of  bidegree $(4,4)$ in the variables $x_0,x_1,y_0,y_1$, which is invariant with respect to $i:(x_0,x_1,y_0,y_1) \rightarrow (x_0,-x_1,y_0,-y_1)$, and such that $V(f)$  is smooth and does not contain the $4$ fixed points: $(0,0), (0,\infty), (\infty,0), (\infty,\infty)$, of the involution $i$. Since $S'$ is a branched double cover, it can be locally described in this way:  let $t \in H^0(\mathbb{P}^1 \times  \mathbb{P}^1, L)$ be a local generator of  $L:=\mathcal{O}_{\mathbb{P}^1 \times \mathbb{P}^1}(2,2)$, and let $f \in k[u,v]$ be a local function such that $s=ft^{\otimes 2}$. Then $S'$ is  given (in the chart $U_{00}:=\{x_0 \neq 0\} \cap \{y_0 \neq 0\})$ by:
\[
\operatorname{spec}(k[u,v,t]/(t^2-f)) \rightarrow  \operatorname{spec}(k[u,v]).
\]
Observe that $h \circ \tau=i \circ h$ (recall that the involution $\tau$ on $S'$ is induced by $i$). Since $i^*u=-u$, and $i^*v=-v$, $h^*u=u$, and $h^*v=v$,  we find $\tau^*(u)=-u$, $\tau^*(v)=-v$, and $\tau^*f(u,v)=f(u,v)$, since the $s$ we started with is invariant under $i^*$.  We also have $\tau^*t=-t$. Indeed since $k[u,v,t]/(t^2-f) \simeq  k[u,v] \oplus k[u,v]t$  as $k[u,v]$-algebras (where the latter has the ring structure induced by $t^2=f)$,  $\tau^*t$ can be locally  written in a unique way as  $a+bt$, where $a,b \in  k[u,v]$. Then 
\[
(\tau^*t)^2=(a+bt)^2=\tau^*f=f
\]
in $k[u,v,t]/t^2-f)$. Since $t^2=f$, this gives $(a +bt)^2=t^2$ which implies  $a=0$ and  $b=+1$ or $b=-1$. Then the  (Enriques) involution $\tau$ satisfy $\tau^*(u,v,t)=(-u,-v,bt)$ in the ring $k[u,v,t]/(t^2-f)$. If we had $b=1$, then the image of the point $(u-0,v-0,t-\sqrt{f(0,0)})$, would be $(-u-0,-v-0,t-\sqrt{f(0,0)})=(u-0,v-0,t-\sqrt{f(0,0)})$. Then this would be a fixed point of the involution $\tau$: the choice $b=1$ is not compatible with the the fact that $\tau$ has no fixed points, hence $b=-1$,  and the action of $\tau$ on function is locally given by:
\[
\tau^*(u,v,t)=(-u,-v,-t)
\]
The induced action on $\Omega_{S'}$ is then given by $du \rightarrow -du$, $dv \rightarrow -dv$,  $dt \rightarrow -dt$.  Since 
\[
h: \operatorname{spec} (k[u,v] \oplus k[u,v][t]) \simeq \operatorname{spec}(k[u,v,t]/(t^2-f)) \rightarrow  \operatorname{spec}(k[u,v])
\]
in this open charts we have the decomposition
\begin{equation}
\label{decompo}
(h^*\Omega_{\mathbb{P}^1\times\mathbb{P}^1}(4,4)) \simeq \Omega_{\mathbb{P}^1\times\mathbb{P}^1}(4,4)_{|U_{00}} \oplus t \Omega_{\mathbb{P}^1\times\mathbb{P}^1}(4,4)_{|U_{00}},
\end{equation} 
and every section can be written locally as as $s_1 +s_2t$ where $s_1,s_2 \in \Omega_{\mathbb{P}^1\times\mathbb{P}^1}(4,4)_{|U_{00}}$. With the identification \eqref{decompo}, the morphism $h^*\Omega \rightarrow \Omega_{S'}$ is just the inclusion. Recall that we aim to show that $\tau$- invariant global sections of $h^*\Omega_{\mathbb{P}^1\times\mathbb{P}^1}(4,4)$ generates the fiber at each point. The decoposition \eqref{decompo} is globally expressed as:
\small{
\[
H^0(S',(h^*\Omega_{\mathbb{P}^1\times\mathbb{P}^1}(4,4)))  \simeq H^0(h_*(h^*\Omega_{\mathbb{P}^1\times\mathbb{P}^1}(4,4))) \simeq  H^0(\Omega_{\mathbb{P}^1\times\mathbb{P}^1}(4,4))) \oplus H^0(\Omega_{\mathbb{P}^1\times\mathbb{P}^1}(4,4)) \otimes_{\mathcal{O}_{\mathbb{P}^1 \times \mathbb{P}^1}} L^{-1}).
\]
}
We first focus on $\tau$-invariant global sections coming from $\Omega_{\mathbb{P}^1 \times \mathbb{P}^1}(4,4)$. In this chart (i.e.  $u=x_1/x_0$, $v=y_1/y_0$), these are locally given by 
\[
u^iv^jx_0^4y_0^4du, \ \ u^{i'}v^{j'}x_0^4y_0^4dv,
\]
where   $ 0 \leq i \leq 2$,  $ 0 \leq j \leq 4$,  $i+j$ odd, and  $ 0 \leq i' \leq 4$,  $ 0 \leq j' \leq 2$  with $i'+j'$ odd. It is easy to see that corresponding global sections generates the vector bundle away from the four fixed points $\{(0,0),(0,\infty),(\infty,0),(\infty,\infty)\}$. To conclude observe that there are four $\tau$-invariant global sections coming from the $H^0(\Omega_{\mathbb{P}^1\times\mathbb{P}^1}(4,4)) \otimes_{\mathcal{O}_{\mathbb{P}^1 \times \mathbb{P}^1}} L^{-1})$-piece which in this chart write as:
\[
tx_0^4y_0^4du \ \text{and}  \ v^2tx_0^4y_0^4du,
\]
and
\[
tx_0^4y_0^4dv \ \text{and}  \ u^2tx_0^4y_0^4dv.
\]
It is easy to see that that for each point in  $\{(0,0),(0,\infty),(\infty,0),(\infty,\infty)\}$ there are combination of these sections which generate the fiber over each point.
\\
For part $(ii)$, this easily follows from the fact that $R \subset S$ is a $2:1$ (étale) cover of a smooth curve in $S$ (see Proposition \ref{doublecover})) with the involution given by $\tau_{|R}$. Hence the $\tau$-invariant part of $H^0(R,\omega_R)$ corresponds to $H^0(f(R),\omega_{f(R)})$. This is globally generated since  $g(f(R))=5$. 
\end{proof}

\begin{remark}
It is easy to see that the same argument does not work with $\Omega_S(2H)$.
\end{remark}

We will also need this basic lemma about intersection of divisors in $\mathbb{P}(\Omega_S)$.
\begin{lemma}
\label{lemmaintersection}
Let $S$ be an Enriques surface, and $\pi: \mathbb{P}(\Omega_S) \rightarrow S$ the bundle morphism. Then for any $H, H' \in \operatorname{Pic}(S)$ we have
\[
\begin{aligned}
&L^3 = s_2(\Omega_S)=-12, \ \  &L^2\cdot \pi^*H = s_1(\Omega_S)\cdot H=0,\\
&L\cdot (\pi^*H)(\pi^*H') = H\cdot H', \ \ &(\pi^*H)^i(\pi^*H')^j = 0 \ \text{for all} \ i+j=3.
\end{aligned}
\]
\end{lemma}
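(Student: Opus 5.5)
The plan is to use the standard description of the Chow ring of a projective bundle together with the Chern classes of an Enriques surface. Set $X:=\mathbb{P}(\Omega_S)$, with the Grothendieck convention, so that $\pi_*L=\Omega_S$ and $L$ is the tautological quotient. Grothendieck's relation then reads
\[
L^2-\pi^*c_1(\Omega_S)\,L+\pi^*c_2(\Omega_S)=0 \quad\text{in } A^\bullet(X).
\]
We also have $\pi_*(L)=1$ (degree one on fibres) and $\pi_*(L^{k+1})=s_k(\Omega_S)$, the Segre classes in the convention where $s(\Omega_S)=c(\Omega_S)^{-1}$ up to sign conventions. I will fix the sign convention by checking it against the relation directly.

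For the numerical inputs on an Enriques surface: $c_1(\Omega_S)=K_S$, which is numerically trivial, and $c_2(\Omega_S)=c_2(S)=e(S)=12$. Intersection numbers only depend on numerical classes, so $\pi^*c_1(\Omega_S)\cdot(\text{anything})$ contributes zero to every top-degree intersection.

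The computations proceed as follows.

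\begin{enumerate}
\item $(\pi^*H)^i(\pi^*H')^j=0$ for $i+j=3$. This is a pullback of a degree-$3$ class from a surface, hence zero.

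\item $L\cdot\pi^*H\cdot\pi^*H'=H\cdot H'$. Here $\pi^*H\cdot\pi^*H'$ is $(H\cdot H')$ times the class of a fibre, and $L$ has degree $1$ on a fibre $\mathbb{P}^1$. Equivalently, use the projection formula with $\pi_*L=[S]$.

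\item $L^2\cdot\pi^*H=0$. Multiply the relation by $\pi^*H$ and intersect: $L^2\pi^*H=\pi^*(K_S\cdot H)\,L-\pi^*(c_2\cdot H)$. The last term vanishes for degree reasons, and the first equals $K_S\cdot H=0$ since $K_S$ is numerically trivial. In Segre language this is $s_1(\Omega_S)\cdot H=\pm c_1\cdot H=0$.

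\item $L^3=-12$. Multiply the relation by $L$:
\[
L^3=\pi^*K_S\cdot L^2-\pi^*c_2(\Omega_S)\cdot L .
\]
The first term is $0$ by the previous step applied to $H=K_S$. The second equals $-c_2(\Omega_S)=-12$, using $\pi_*L=[S]$. This also identifies $s_2(\Omega_S)=c_1^2-c_2=-12$, which is the convention the statement uses.
\end{enumerate}

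The only delicate point is bookkeeping: getting the sign in Grothendieck's relation right for the quotient convention. I would settle this by checking that the relation holds on $\pi^*\Omega_S\twoheadrightarrow L$ with kernel $\pi^*\det\Omega_S\otimes L^{-1}$, whose Chern class gives $c(\pi^*\Omega_S)=(1+L)(1+\pi^*c_1-L)$. Comparing degree-$2$ parts yields exactly the displayed relation.
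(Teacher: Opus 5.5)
Your proof is correct and follows essentially the same route as the paper. The paper simply invokes the Segre-class identities $\pi_*(L^{1+i})=s_i(\Omega_S)$ together with the projection formula. Your derivation from Grothendieck's relation $L^2-\pi^*c_1(\Omega_S)L+\pi^*c_2(\Omega_S)=0$, with $c_1(\Omega_S)=K_S\equiv 0$ and $c_2(\Omega_S)=12$, is exactly that definition made explicit.
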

\begin{proof}
It follows easily from the definition of Segre classes.
\end{proof}
Now we are able to construct the destabilizing family.
 \begin{teo}
 \label{maintheorem2}
Let $S$ be a very  general Enriques surface. Then there exists a line bundle $H'$ with $H'^2=48$ and $\phi(H')=4$, and a positive dimensional family of smooth irreducible curves in $|H'|$ such that $\Omega_{S}|_C$ is not semistable.
\end{teo}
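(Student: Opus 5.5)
The plan is to realize the destabilizing curves as complete intersections $C'=S_1\cap D_2$ inside $\mathbb{P}(\Omega_S)$. Here $S_1$ is the Höring–Peternell surface \eqref{hpdivisor} attached to a suitable elliptic fibration, and $D_2$ is a general member of a base point free linear system of the form $|L+\pi^*(4H)|$, where $H=F_1+F_2$. I then show that $\pi|_{C'}$ is an isomorphism onto a smooth curve $C\subset S$, and that the tautological quotient $\pi^*\Omega_S|_{C'}\to L|_{C'}$ has negative degree. Since $\deg \Omega_S|_C=0$, this quotient destabilizes $\Omega_S|_C$.

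\textbf{Setup.} Since $S$ is very general, it is unnodal, so we may choose half pencils $F_1,F_2$ with $F_1\cdot F_2=1$. Let $f\colon S\to\mathbb{P}^1$ be the fibration $|2F_1|$. For very general $S$ the only non-reduced fibres are the two double fibres $2F_1$ and $2F_1'$, and these half fibres are smooth by Remark \ref{remarksinguell}. All other singular fibres are irreducible nodal (Lemma \ref{singularity}). Hence $D=F_1+F_1'\equiv 2F_1$, and
\[
S_1\in|L+4\pi^*F_1-\pi^*D|, \qquad S_1\equiv L+2\pi^*F_1 .
\]
By Lemma \ref{singularity}, $S_1$ is a smooth surface, namely the blow-up of $S$ at the reduced points of $Z$. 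By Proposition \ref{globallygeneratedeNriques}, $\Omega_S(4H)$ is globally generated, so $L+4\pi^*H=\mathcal{O}_{\mathbb{P}(\Omega_S(4H))}(1)$ is base point free. Bertini then gives that a general $D_2\in|L+4\pi^*H|$ cuts $S_1$ in a smooth curve $C'$, and these curves move in a positive-dimensional family.

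\textbf{Numerics.} By Lemma \ref{lemmaintersection},
\[
L\cdot C'=L\cdot(L+2\pi^*F_1)\cdot(L+4\pi^*H)=L^3+8\,F_1\cdot H=-12+8=-4<0 .
\]
The push-forward is $\pi_*C'\equiv c_1(\Omega_S)+2F_1+4H\equiv 6F_1+4F_2$, because $c_1(\Omega_S)=K_S\equiv 0$. Set $H'$ to be the corresponding class, with the torsion fixed by the actual linear equivalences. Then $H'^2=2\cdot 24=48$. Also $H'\cdot F_1=4$ and $H'\cdot F_2=6$, and I will check using the isotropic decomposition that no isotropic class gives a smaller value, so $\phi(H')=4$.

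\textbf{Isomorphism onto the image.} The exceptional curves of $S_1\to S$ are the fibres $\ell=\pi^{-1}(x)$ for $x\in Z$. Since $(L+4\pi^*H)\cdot\ell=1$, a general $D_2$ meets each such $\ell$ transversally in one point. Therefore $C'$ meets each exceptional curve once and transversally, and its image under the blow-down is smooth at $x$. Away from $Z$, the map $\pi|_{S_1}$ is an isomorphism. Hence $C:=\pi(C')\cong C'$ is smooth and lies in $|H'|$. Then $\pi|_{C'}^*\Omega_S|_C\to L|_{C'}$ is a quotient of degree $-4<0=\mu(\Omega_S|_C)$, so $\Omega_S|_C$ is not semistable.

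\textbf{Main obstacle.} The delicate step is irreducibility (connectedness) of $C'$ for general $D_2$. I would get this from bigness and nefness of $(L+4\pi^*H)|_{S_1}$, which gives connectedness of a general member on the smooth surface $S_1$; combined with smoothness this gives irreducibility. Bigness follows from the self-intersection
\[
(L+4\pi^*H)^2\cdot(L+2\pi^*F_1)=L^3+8\,L\cdot\pi^*H\cdot\pi^*F_1+16H^2+16\,F_1\cdot H=-12+8+64+16>0,
\]
using $H^2=2$, $F_1\cdot H=1$ and $L^2\cdot\pi^*(\cdot)=0$. Nefness holds because the system is base point free. A second point to verify is that distinct general $D_2$ give distinct image curves, so that the family is genuinely positive-dimensional. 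This should follow from $h^0(S_1,(L+4\pi^*H)|_{S_1})\ge 2$ together with $C'\cong C$ determining $C'$ as the canonical lift of $C$ through the conormal quotient.
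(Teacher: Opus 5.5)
Your proposal is correct and is essentially the paper's own proof. You cut the H\"oring--Peternell surface $S_1\equiv L+2\pi^*F_1$ by a general member of $|L+4\pi^*(F_1+F_2)|$, which is the paper's $|L+2\pi^*H|$ with $H=2F_1+2F_2$, obtain $L\cdot C'=-4$, and use that each exceptional fibre is met once. Reading off $[C]\equiv 6F_1+4F_2$ by push-forward and $C'\cdot\ell=1$ from $L\cdot\ell=1$ is only more direct bookkeeping than the paper's $C^2=\tilde C^2+12$ and adjunction computation.

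There are two harmless slips. First, the bigness number is $-12+16\,F_1\cdot H+16H^2=36$, not $-12+8+64+16$. Second, $C'$ is not the conormal (canonical) lift of $C$, since that lift has $L$-degree $2g-2=48$; it is the strict transform of $C$ under the blow-up $S_1\to S$, and that is why distinct $C'$ give distinct $C$.
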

\begin{proof}
Since $S$ is a very general Enriques surface we can assume that there are two half pencils $F_1$ and $F_2$ such that $F_1 \cdot F_2=1$ (again \cite[Theorem 17.7]{BHPV}). Moreover, by Remark \ref{remarknodalfibres}, we can assume that the elliptic fibration given by $|F_1|$ has exactly two double fibres (and these are precisely the multiple fibres), the underlying reduced fibres are smooth, and there are exactly $12$ singular (non reduced) fibres, all of which are nodal integral curves. We set $H:=2F_1+2F_2$. In this case  \eqref{hpdivisor} becomes:
\begin{equation}
S_1 \in |L+2\pi^*F_1+\pi^*K_S|
\end{equation}
since $D=F_1+F_1'$,  and $F_1' \sim F_1+K_S$. Here $F_1'$ denotes the other half fiber, as usual. By the previous discussion and   Lemma \ref{singularity}, we know that $S_1$ is a smooth irreducible surface obtained as the blow-up of $S$ in $12$ points. 

From Proposition \ref{globallygeneratedeNriques} we know that  $\Omega_{S}(2H)$ is globally generated (notice that $2H$ here is $4H$ in the notation of the Proposition).  Then,  pulling-back,  we obtain that   $\pi^*(\Omega_S(2H))$ and the (twisted) tautological quotient:  $L+2\pi^*H$ are globally generated on $\mathbb{P}(\Omega_S)$. Set $l:=L|_{S_1}$, and $h:=H|_{S_1}$. Then, the linear system $|l+2h|$ is globally generated on $S_1$.  Moreover, $(l+2h)^2=(L+2\pi^*F_1+\pi^*K_S)(L+2\pi^*H)^2=4H^2+8F_1H-12=32+16-12>0$. Hence, by Bertini, a  general element in $|l+2h|$ is a smooth irreducible curve which we denote by $\tilde{C}$. Now we show that $\operatorname{deg}(L_{|\tilde{C}}) <0$.  We have:
\[
\begin{aligned}
\operatorname{deg}(l|_{\tilde{C}})
&:= l(l+2h) \\
&= L(L+2\pi^*F_1+\pi^*K_S)(L+4\pi^*F_1+4\pi^*F_2) \\
&= -12 + 8F_1F_2=-4<0.
\end{aligned}
\]
 Now denote by  $g$ the restriction of the bundle morphism $\mathbb{P}(\Omega_S) \rightarrow S$ to $S_1$.  We are going to show that $g_{|\tilde C}$ is an isomorphism onto its image following the same approach as in \cite{GO}. Recall that $S_1$ is the blow-up in $12$ points. Let $E=\sum_{i=1}^{12} E_i$ be the exceptional divisor of $g:S_1 \rightarrow S$. Since $K_{\mathbb{P}(\Omega_S)}=-2L$, by adjunction $K_{S_1}=(2\pi^*F_1+\pi^*K_S-L)|_{S_1}$. On the other hand we have $K_{S_1}=g^*K_S+E$. Then:
 \[
 \tilde C \cdot E=\tilde C \cdot (\pi^*K_{S} +E)=\tilde C \cdot K_{S_1}=(L+2\pi^*F_1+\pi^*K_S)(L+2\pi^*H)(2\pi^*F_1 +\pi^*K_S-L)=12.
 \]
 Again ,we have used  Lemma  \ref{lemmaintersection}.  Actually we can say that $\tilde{C} \cdot E_i=1$ for each $i$:  since $E_i$ is a smooth rational curve, and $S$ is a very general Enriques surface, $E_i$ must be contracted by $g$. Hence it  is a fiber of the bundle $\mathbb{P}(\Omega_S)$. Since  $1=E_i\cdot L$ for every $i$, we have $E_i \cdot (l+2h)=1$. Then a general $\tilde{C} \in |l+2h|$ is a smooth irreducible curve which intersects each exceptional divisor in $1$ point, and $C:=g(\tilde{C}) $ is  a smooth irreducible curve. Using that $g^*C-E=\tilde C$ we find $
C^2=\tilde C^{\,2}+12=36+12=48$. Now observe that $F_1C=g^*F_1 g^*C=g^*F_1 \tilde C=\pi^*F_1(L+2\pi^*F_1+\pi^*K_S)(L+2\pi^*H)=4$. It is immediate to see that if $F'$ is another half pencil, we have $F'C \geq 4$. This gives $\phi(C)=4$.  
\end{proof}

\begin{remark}
\label{remarknodalfibres}
If $S$ is a very general Enriques surface, then $S$ contains no smooth rational curves. Moreover, every genus-one fibration on $S$ has exactly $12$ reduced singular fibres, all of which are nodal cubics, and for each genus-one fibration on $S$, both half-fibres are irreducible, smooth elliptic curves (see \cite{ramschutt}, \cite[Remark 5.6]{martin})

\end{remark}

\begin{remark}
\label{remarkcomparison}
The idea (coming from \cite{GO}) of constructing destabilizing curves on Enriques surfaces as complete intersections inside $\mathbb{P}(\Omega_S)$ of two smooth surfaces required some careful work in order to be carried out.
Let $H_1$ and $H_2$ be two line bundles on $S$, and assume there exist $S_1 \in |L+\pi^*H_1|$ and $S_2 \in |L+\pi^*H_2|$ whose intersection is a smooth irreducible curve $\tilde{C}$, and such that $\deg(L_{|\tilde{C}})<0$.
Then the same construction as in the Theorem shows that the image of $\tilde{C}$ in $S$ is a smooth irreducible curve which destabilizes the cotangent bundle.
The difficulty lies, of course, in the assumption.
To ensure that there exist divisors $S_i \in |L+\pi^*H_i|$, one either gives an ad-hoc construction (as we did in the theorem,  using Lemma~\ref{singularity}), or shows that $|L+\pi^*H_i|$ is globally generated.  
One is then usually led to analyze the global generation of $\Omega_S(H)$.
When $H$ is not a multiple of a polarization this is usually a hard problem, so one considers $H_i=m_iH'_i$ with $m_i\ge 2$.
Requiring that $L\cdot(L+m_1\pi^*H'_1)\cdot(L+m_2\pi^*H'_2)<0$ gives the constraint $H'_1\cdot H'_2<3$. Moreover by Hodge index theorem
\[
\phi(H'_1)\phi(H'_2)\le \sqrt{(H'_1)^2}\,\sqrt{(H'_2)^2}\le H'_1\cdot H'_2<3.
\]
Thus we get a very limited choice for the $H'_i$: either both $\phi(H_i')=1$, or  $\phi(H'_1)=1$ and $\phi(H'_2)=2$, or the reverse situation holds.
In particular, at least one of the $H'_i$ is not globally generated, and the other is  not very ample (recall Theorem \ref{etoegkveraglob}). 
In this sense if one wants to follow the approach in \cite{GO} the construction is quite optimal.

\end{remark}


\bibliography{references}

\bibliographystyle{alpha}

\end{document}